\documentclass[reqno, a4paper]{amsart}
\usepackage{amssymb, amscd, amsfonts, amsthm}
\usepackage{thmtools}
\usepackage{mathtools}
\usepackage{tikz-cd}
\usepackage{enumitem}
\usepackage{hyperref}
\usepackage{cleveref}

\declaretheorem[name=Theorem,refname={Theorem,Theorems},Refname={Theorem,Theorems},numberwithin=section,]{theorem}
\declaretheorem[name=Proposition,refname={Proposition,Propositions},Refname={Proposition,Propositions},sibling=theorem,]{proposition}
\declaretheorem[name=Lemma,refname={Lemma,Lemmas},Refname={Lemma,Lemmas},sibling=theorem,]{lemma}
\declaretheorem[name=Corollary,refname={Corollary,Corollaries},Refname={Corollary,Corollaries},sibling=theorem,]{corollary}

\declaretheorem[name=Definition,refname={Definition,Definitions},Refname={Definition,Definitions},sibling=theorem,style=definition,]{definition}
\declaretheorem[name=Notation,refname={Notation,Notations},Refname={Notation,Notations},sibling=theorem,style=definition,]{notation}

\declaretheorem[name=Remark,refname={Remark,Remarks},Refname={Remark,Remarks},sibling=theorem,style=remark,]{remark}

\newcommand{\luk}{\L u\-ka\-s\-ie\-wicz}

\DeclareMathOperator{\interval}{[0,1]}
 
\DeclareMathOperator{\maxspec}{{\boldsymbol{\mu}}}

\newcommand{\m}{\mathfrak{m}}
 
\newcommand{\Bms}{\mathsf{Bms}}
\newcommand{\uSlg}{\mathsf{uS{\ell}g}}
\renewcommand{\lg}{\mathsf{{\ell}g}}
\newcommand{\ulg}{\mathsf{u{\ell}g}}
\newcommand{\SMV}{\mathsf{SMV}}
\newcommand{\Set}{\mathsf{Set}}

\newcommand{\B}{\mathcal{B}}
\renewcommand{\SS}{\mathcal{S}}

\newcommand{\Z}{\mathbb{Z}}

\newcommand{\LCM}{\mathsf{LCM}}

\newcommand*\cocolon{%
	\nobreak
	\mskip6mu plus1mu
	\mathpunct{}%
	\nonscript
	\mkern-\thinmuskip
	{:}%
	\mskip2mu
	\relax
} 

 \title[Unital Specker $\ell$-groups and boolean multispaces]
{  
Unital Specker $\ell$-groups and boolean multispaces
 }

\author{Marco Abbadini}

\address[Marco Abbadini]{Research Institute in Mathematics and Physics\\
Université ca\-tho\-lique de Louvain\\
Chem.\ du Cyclotron 2\\
1348 Ottignies-Louvain-la-Neuve\\
Belgium}
\email{marco.abbadini@uclouvain.be}

\author{Daniele Mundici}

\address[Daniele Mundici]{Department of
Mathematics and Computer Science  ``Ulisse Dini''\\
University of Florence\\
Viale Morgagni 67/A\\
I-50134 Florence \\
Italy}
\email{daniele.mundici@unifi.it }

\thanks{The first author was funded by UK Research and Innovation (UKRI) under the UK government’s Horizon Europe funding guarantee (grant number EP/Y015029/1, Project ``DCPOS'') during his affiliation at the University of Birmingham and by an FSR Incoming Postdoctoral Fellowship during his affiliation at the Université catholique de Louvain.}

\date{}

\begin{document}

\keywords{
Specker $\ell$-groups,  multiset, boolean
multispace, Stone duality, Priestley duality}

 \subjclass[2000]{Primary: 18F70.\,\,   
Secondary: 06D35;\,\,06F20}

\begin{abstract}
	As a topological generalization of the notion of a multiset,
	a {\it boolean multispace} is a boolean space 
	$X$ with a continuous function
	$u\colon X\to \Z_{>0}$, where $\Z_{>0}=\{1,2,\dots\}$
	has the discrete topology.
	In this paper the category of boolean multispaces 
	and continuous multiplicity-decreasing
	morphisms with respect to the divisibility order
	is shown to be dually equivalent to the category of
	unital Specker $\ell$-groups 
	and unital $\ell$-homomorphisms.
	This result extends Stone duality, because
	unital Specker $\ell$-groups whose distinguished unit is
	singular are equivalent to boolean algebras.
	Boolean multispaces, in turn, are categorically
	equivalent to the Priestley duals
	of the MV-algebras corresponding to
	unital Specker $\ell$-groups 
	via the $\Gamma$ functor. Via duality, we show that the category 
	of unital Specker $\ell$-groups has finite colimits and finite products, but 
	lacks some countable copowers and equalizers.
\end{abstract}

\maketitle

\hfill{\large \it Honoring Hilary Priestley}

\section{Introduction}

\begin{quote}
{\it  \small
\dots and one may say that
the invention of functors 
is one of the main goals 
of modern mathematicians, 
and one which usually yields the most
startling
results.}

\footnotesize{\hfill J. Dieudonn\'e,  \cite[p.~236]{die}}
\end{quote}

\medskip
\noindent
MV-algebras were introduced by 
C.C.\ Chang in his  paper \cite{cha58}.
By definition, an MV-algebra $A$ is a structure
$(A,0, \neg,\oplus)$ satisfying the equations
$x\oplus (y\oplus z)= (x\oplus y)\oplus z$, \,\,
$x \oplus 0  =   x$,\,\,
$x \oplus \neg 0 = \neg 0$, \,\,
$\neg \neg x = x$\, and, characteristically,  
\begin{equation}
\label{equation:noncanonical}
x\oplus \neg(x\oplus \neg y)=y\oplus \neg(y\oplus \neg x).
\end{equation}
These equations are 
a terse equivalent reformulation of  Chang's
original axiomatization in \cite{cha58}.
See \cite[\S 1.7]{cigdotmun}.
As shown in  \cite{kol}, the
commutativity of $\oplus$ 
follows from the above five equations.

From Chang's completeness theorem   \cite{cha59}
it follows  that an equation
holds in the unit real interval  $\interval$ 
equipped with the constant 0 and the \luk\ implication
$x \to_{\mbox{\tiny \L}}  y \,\,\,(=  \neg x \oplus y)$
if and only if it holds in every MV-algebra.  See \cite[Theorem 2.5.3]{cigdotmun}.
For short,  the algebra $(\interval, 0, \to_{\mbox{\tiny \L}})$
is a term-equivalent variant of the canonical generator 
$\interval=(\interval, 0, \neg, \oplus)$ of the equational class 
of MV-algebras.

As shown in   \cite{mun-jsl},
equation \eqref{equation:noncanonical} is deeply related to the
continuity of implication in a
$\interval$-valued logic. 
In the terminology of \cite{gehpri},
this identity is not ``canonical''. This
makes it difficult to extend to MV-algebras 
Priestley duality,  \cite{pri1,pri2,pri3}. 
 
In the present paper we consider a special class of 
MV-algebras, called  Specker MV-algebras. They are
the counterpart of  unital Specker $\ell$-groups \cite{bez,
con,condar,nob}
via the  $\Gamma$ functor, \cite{mun-jfa}.
By  \cite[Theorem~3.16]{condar}, Specker  $\ell$-groups
are uniquely determined by their underlying lattice structure together with
the identity element.
By  Lemma \ref{lemma:mv-representation},
Specker MV-algebras are just finite products  $\prod_i B_i\otimes \, $\L$_{n(i)}$
of tensor products of boolean algebras $B_i$ by finite \luk\
chains  \L$_{n(i)}$.

As a topological generalization of the notion of a multiset,
a {\it boolean multispace} is a boolean space,
(i.e., a totally disconnected
compact Hausdorff space)  $X$
equipped  with a continuous function
$u \colon  X\to \Z_{>0}$, where $\Z_{>0}=\{1,2,\dots\}$
has  the discrete topology. 
In Theorem \ref{theorem:duality} it is 
shown  that boolean multispaces and   
continuous multiplicity-decreasing
morphisms with respect to the divisibility order
are  dually equivalent  to 
unital Specker $\ell$-groups 
and  unital $\ell$-homomorphisms.
Since, as proved in \cite{mun-tac},  boolean spaces
are dual to unital Specker $\ell$-groups whose distinguished unit is singular,
this result extends Stone duality.
See Corollary \ref{corollary:stone} for details.
Using  the $\Gamma$ functor
we obtain that  boolean multispaces
are categorically  equivalent to the Priestley duals
of Specker MV-algebras. See
Proposition \ref{proposition:priestley}.
 This result should be compared with the  duality obtained in
the paper   \cite{fusgehgoomar}  for  the class of {\it all} MV-algebras, 
  where the dual of an MV-algebra $A$
  is an enriched Priestley space on the set
  of prime lattice ideals of $A$.
  
Section \ref{section:adjoint} is devoted to the construction
of a functor  $\mathcal B$ that, together with $\mathcal S$,
is part of the duality.

  In Sections 
  	\ref{section:sei}--\ref{section:otto}
it is shown, among other things,
  that 
boolean multispaces are finitely complete and have finite 
coproducts, but---differently from boolean spaces---lack
 some countable powers and some coequalizers.
 
In the final Section \ref{section:related-work}
we briefly discuss related work in the literature on multisets.

\medskip
\noindent
To the best of our knowledge, the present paper is the first one
where  a duality is constructed between a category
of   topological  multispaces  
and a pre-existing,  time-honored
 category of algebraic structures, namely unital Specker $\ell$-groups and their unital 
 $\ell$-homomorphisms.  Unital and  non-unital 
 Specker $\ell$-groups have  well-known representation theorems
in the literature. See, e.g., \cite[\S 13.5]{bkw}, and \cite{bez} and references therein.

 Last but not least, this paper  gives an answer to Problem 5.5  in 
\cite{mun-sl} concerning the construction of 
MV-algebraic  {\it categories}
of Ulam-R\'enyi games on infinite search spaces.  As a matter of fact, the MV-algebras of
compact Ulam-R\'enyi games  (\cite[Definition 5.2]{mun-sl})
equipped with MV-algebraic homomorphisms are
categorically equivalent to Specker MV-algebras,  as well as  to
unital Specker $\ell$-groups. 
The deep connections between  searching games with errors/lies and
their underlying combinatorial-algebraic structures 
might be of help to enrich  the theory of adaptive error correcting codes 
(see \cite[and references therein]{cic} for a detailed account) 
with combinatorial game-theoretical  tools from 
category theory.

\section{Preliminary material}
\label{section:preliminary}

We refer to \cite{bkw} for $\ell$-groups,
to \cite{ada,bor} for category theory,
and  to \cite{cigmun, cigdotmun, mun-jfa} for
MV-algebras and the functors  $\Gamma$
and $\Xi$.

 By an {\it $\ell$-group} we mean a lattice-ordered abelian group.
A  (distinguished, strong, order)  {\it unit}  of an
$\ell$-group $G$ is an element  $u\geq 0$
with the property that for every $g\in G$ there is $n\in 
 \Z_{>0}=\{1,2,\dots\}$
such that    $- nu \leq  g \leq  nu.$
The pair  $(G,u)$ is said to be a {\it unital} $\ell$-group.
 Unless otherwise specified, 
 every ideal of  $(G,u)$ in this paper 
 (also known as an $\ell$-ideal) 
is {\it proper},
i.e., it is different from $G$. 
A {\it unital $\ell$-homomorphism} between two
unital $\ell$-groups  $(G,u_G)$ and $(H,u_H)$
is an $\ell$-homomorphism  $\psi\colon G\to H$
such that $\psi(u_G)=u_H$.

\begin{definition}[\cite{bez, bkw, con, condar,  spe}]
  A {\it singular} element  of  $G$ is an element
$0\leq s\in G$ such that,
for all $0\leq a \leq s$,\,\,\,\,\,
$a \wedge (s-a)=0$.\footnote{In \cite[Definition 11.2.7]{bkw}
one finds the additional assumption  $s>0$.  
This will have no effect
on the results of this paper.}
A {\it Specker} $\ell$-group $S$ is an $\ell$-group that
is generated, as a group, by its singular elements.
 A {\it unital Specker $\ell$-group} is a pair $(S,u)$ 
 where $S$ is a Specker $\ell$-group 
 and $u$ is a  unit of $S$. 
\end{definition}

By  \cite[Lemma 2.11]{bez},
weak and strong units coincide in Specker $\ell$-groups.
Any unital Specker $\ell$-group  
 $(S,u)$ has a greatest singular element,   denoted
$s_S$.
This immediately follows, e.g., from \cite[Lemma 2.11 and Corollary 2.12]{bez}. 
 We let
$$
 \uSlg
$$
denote the full subcategory of unital $\ell$-groups
given by unital Specker $\ell$-groups and unital 
$\ell$-hom\-om\-orph\-isms.

 For any unital $\ell$-group $(G,u)$,
  the space  $\maxspec(G)$ of maximal ideals of
$(G,u)$ is equipped with  
 the maximal spectral topology inherited by restriction of the
 spectral topology from prime to maximal ideals.
 In particular, by definition, a subbasis (in fact, 
  a basis) of open sets in $\maxspec(G)$ is given by the
 family of sets of the form
 $$
 \{\mathfrak m \in \maxspec(G)
  \mid g\notin \mathfrak m\},
 $$
 letting $g$ range over all elements of $G$.
 A basis of closed sets is then given by the
 family of sets of the form
 $$
 F_g=\{\mathfrak m \in \maxspec(G) \mid g \in \mathfrak m\},
 $$
 for all $g \in G$.
See \cite[Definition 10.1.2 and \S 10.2]{bkw}.
The closed sets in $\maxspec(G)$ are precisely the sets of the form
$\{\mathfrak m\in \maxspec(G)\mid
\mathfrak j \subseteq \mathfrak m\}$,
for $\mathfrak j$ a (possibly improper) ideal of $G$.
See  \cite[Proposition 10.1.7]{bkw}.\footnote{In \cref{corollary:misto}, 
we will observe that the assignment $\mathfrak j \mapsto
 \{\mathfrak m\in \maxspec(G)\mid \mathfrak j \subseteq \mathfrak m\}$ 
 from the set of possibly improper ideals of $(G,u)$ to the set of 
 closed subsets of $\maxspec(G)$ is a bijection (in fact, an order 
 anti-isomorphism), with inverse $C \mapsto \bigcap C$.}

\medskip

\begin{lemma} 
\label{l:unique-iso}
For any  totally ordered nontrivial Specker $\ell$-group $S$   there
	 is a unique $\ell$-isomorphism of $S$ onto $\Z$, once the latter 
is  equipped with the natural order.
\end{lemma}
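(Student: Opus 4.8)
The plan is to use the total order to collapse the singularity condition down to a statement about atoms. In a totally ordered group the lattice meet $\wedge$ is just the minimum, so an element $0\le s$ is singular precisely when, for every $a$ with $0\le a\le s$, we have $\min(a,s-a)=0$; that is, $a=0$ or $a=s$. Thus a nonzero singular element $s$ is exactly a minimal strictly positive element (an atom of the positive cone): there is nothing strictly between $0$ and $s$.

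Next I would pin down a single generator. Since $S$ is nontrivial and is generated as a group by its singular elements, not all singular elements can be $0$; hence there is a singular $s>0$. If $s_1,s_2>0$ are both singular, total order gives (say) $s_1\le s_2$, and applying the singularity of $s_2$ to $a=s_1$ forces $s_1=0$ or $s_1=s_2$, so $s_1=s_2$. Therefore $S$ has exactly one positive singular element $s$, and the only singular elements are $0$ and $s$. Consequently $S$ is generated as a group by $s$ alone, i.e.\ $S=\Z s=\{ns\mid n\in\Z\}$.

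It then remains to check that $n\mapsto ns$ is an $\ell$-isomorphism $\Z\to S$. It is a surjective group homomorphism by the previous paragraph; it is injective because $\ell$-groups are torsion-free (if $ns=0$ with $n>0$ and $s>0$, then summing $s$ with itself $n$ times yields $ns>0$, a contradiction), so $s$ has infinite order. It is order-preserving since $ns\le ms$ iff $(m-n)s\ge 0$ iff $m\ge n$, using $s>0$; and an order-isomorphism between totally ordered groups automatically preserves $\wedge$ and $\vee$.

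Finally, uniqueness. Any $\ell$-isomorphism $\varphi\colon S\to\Z$ preserves the order, hence sends the unique atom $s$ of $S$ to the unique atom $1$ of the positive cone of $\Z$; then $\varphi(ns)=n\varphi(s)=n$ for all $n$, so $\varphi$ is determined on all of $S=\Z s$. Equivalently, composing two such isomorphisms gives an $\ell$-automorphism of $\Z$, and the only order-preserving group automorphism of $\Z$ is the identity. I expect the only delicate point to be the bookkeeping in the first two paragraphs---correctly using totality to identify the nonzero singular elements with a single atom---after which surjectivity, injectivity, and monotonicity all follow immediately.
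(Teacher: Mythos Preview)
Your proof is correct and follows essentially the same approach as the paper: the key step is showing that any two nonzero singular elements of a totally ordered Specker $\ell$-group coincide, which is precisely what the paper proves (in fewer words). You supply the remaining details---that $S=\Z s$, that the map $n\mapsto ns$ is an $\ell$-isomorphism, and uniqueness---which the paper leaves implicit under ``this immediately follows.''
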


\begin{proof}
	This immediately 
	follows from the fact that any two nonzero singular elements of $S$ coincide.
	As a matter of fact,  let $a$ and $b$ be nonzero singular elements.
	Since $S$ is totally ordered, either $a \leq b$ or $b \leq a$.
	Without loss of generality,   $a \leq b$.
	Since $b$ is singular, $a \land (b - a) = 0$.
	Since $S$ is totally ordered, either $a = 0$ or $b - a = 0$.
	Since $a$ is nonzero, $b - a = 0$, i.e., $a = b$.
\end{proof}

\begin{lemma}
 \label{l:maps}
	Any surjective $\ell$-homomorphism between $\ell$-groups maps singular elements to singular elements.
	Therefore, homomorphic images (i.e., quotients) of Specker $\ell$-groups are Specker $\ell$-groups.
\end{lemma}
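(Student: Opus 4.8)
The plan is to prove the two assertions in order, since the second follows immediately from the first once we recall the definition of a Specker $\ell$-group. Let $\psi\colon G \to H$ be a surjective $\ell$-homomorphism and let $s \in G$ be a singular element. First I would recall what must be verified: writing $s' = \psi(s)$, I need $s' \geq 0$ and, for every element $a'$ of $H$ with $0 \leq a' \leq s'$, the equality $a' \wedge (s' - a') = 0$. The nonnegativity is immediate, since $\ell$-homomorphisms preserve the order (equivalently, they preserve $\wedge$ and $\vee$), so $s \geq 0$ gives $\psi(s) = s' \geq 0$.

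The key step is to handle an arbitrary $a'$ in the interval $[0, s']$ of $H$. The obstacle here is that such an $a'$ need not a priori be the image of an element of $G$ that already lies in $[0, s]$; surjectivity only supplies \emph{some} preimage. So first I would pick, by surjectivity, any $b \in G$ with $\psi(b) = a'$, and then \emph{correct} it to land in the right interval: set $a = (b \vee 0) \wedge s$. Since $\psi$ preserves $\vee$ and $\wedge$ and since $\psi(0) = 0$ and $\psi(s) = s'$, we compute $\psi(a) = (a' \vee 0) \wedge s' = a' \wedge s' = a'$, the last equality because $0 \leq a' \leq s'$. Moreover $a$ manifestly satisfies $0 \leq a \leq s$ in $G$. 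This is precisely the point where surjectivity is essential, and it is the step I expect to carry the proof.

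Now that $a \in [0, s]$, the singularity of $s$ in $G$ yields $a \wedge (s - a) = 0$. Applying $\psi$, using that it preserves $\wedge$ and subtraction (as a group-and-lattice homomorphism) together with $\psi(a) = a'$ and $\psi(s) = s'$, gives
\[
a' \wedge (s' - a') = \psi\bigl(a \wedge (s - a)\bigr) = \psi(0) = 0.
\]
Since $a'$ was an arbitrary element of $[0, s']$, this shows $s' = \psi(s)$ is singular, proving the first assertion.

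For the second assertion, let $S$ be a Specker $\ell$-group and let $\psi\colon S \to H$ be a surjective $\ell$-homomorphism onto another $\ell$-group $H$; every homomorphic image of $S$ arises this way (replacing $H$ by the image $\psi(S)$ if necessary, which is itself an $\ell$-group). By definition $S$ is generated as a group by its singular elements, so every element of $S$ is a $\Z$-linear combination of singular elements; applying the group homomorphism $\psi$, every element of $H$ is a $\Z$-linear combination of the images $\psi(s)$ of singular elements $s \in S$. By the first assertion each such $\psi(s)$ is singular in $H$, so $H$ is generated as a group by singular elements, i.e., $H$ is a Specker $\ell$-group. This completes the plan.
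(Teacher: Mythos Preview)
Your proof is correct and follows essentially the same approach as the paper: you use surjectivity to find a preimage, correct it via $(b\vee 0)\wedge s$ to land in $[0,s]$, apply the singularity of $s$, and push the resulting identity forward through $\psi$. The only difference is cosmetic---you spell out the second assertion in a bit more detail than the paper's ``the rest is immediate.''
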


\begin{proof}
	Let $f \colon S \twoheadrightarrow T$ be a surjective $\ell$-homomorphism between $\ell$-groups.
	Let $s \in S$ be a singular element.
	We show that $f(s)$ is a singular element.
From $s \geq 0$ we get  $f(s) \geq 0$.
For any $b \in T$  such that $0 \leq b \leq f(s)$  let us prove $b \land (f(s) - b) = 0$.
	By surjectivity, there is $a \in S$ such that $f(a) = b$.
	Set $a' = (a \lor 0) \land s$.
	Then, $0 \leq a' \leq s$, and hence, since $s$ is a singular element, we have $a' \land (s - a') = 0$.
	Therefore, $f(a') \land (f(s) - f(a')) = 0$.
	Moreover, $f(a') = f((a \lor 0) \land s) = (f(a) \lor 0) \land f(s) = f(a)$, whence
 $f(a) \land (f(s) - f(a)) = 0$.
	This proves that $f$ maps singular elements to singular elements. The rest is immediate.
\end{proof}

\begin{theorem}
 \label{theorem:unique-rho}
	Let  $S$ be a Specker $\ell$-group
	and  $\mathfrak m$ a maximal ideal of $S$.
	Then there is a unique  $\ell$-homomorphism $\rho_{\m}$ of $S$ 
	onto $\Z$ whose {\em kernel}  (i.e., the preimage of $\{0\}$) is $\m$.
\end{theorem}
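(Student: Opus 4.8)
The plan is to pass to the quotient $T := S/\mathfrak{m}$ and to identify it with $\Z$ by means of the two preceding lemmas. Write $\pi \colon S \to T$ for the canonical $\ell$-homomorphism; by construction $\pi$ is surjective and its kernel is $\mathfrak{m}$. First I would record that $T$ is a nontrivial Specker $\ell$-group: it is nontrivial because $\mathfrak{m}$ is proper, and it is Specker by \cref{l:maps}, being a homomorphic image of the Specker $\ell$-group $S$.

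The heart of the argument is to show that $T$ is totally ordered, and this is the step I expect to be the main obstacle. Here I would exploit the maximality of $\mathfrak{m}$. By the standard correspondence between the $\ell$-ideals of $T$ and the $\ell$-ideals of $S$ containing $\mathfrak{m}$, maximality of $\mathfrak{m}$ forces $T$ to be simple, i.e.\ its only $\ell$-ideals are $\{0\}$ and $T$. I then claim that a simple $\ell$-group is totally ordered: were $T$ not totally ordered, it would contain two disjoint nonzero positive elements $a,b$ (so $a \wedge b = 0$), and the principal $\ell$-ideal generated by $a$ would be nonzero and proper, contradicting simplicity. Properness is the only point requiring care, and it reduces to the routine abelian-$\ell$-group identity $na \wedge b = 0$ for all $n$: if $b$ lay in the ideal generated by $a$, then $b \leq na$ for some $n$, whence $b = b \wedge na = 0$, a contradiction.

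Having established that $T$ is a totally ordered nontrivial Specker $\ell$-group, \cref{l:unique-iso} supplies a unique $\ell$-isomorphism $\iota \colon T \to \Z$ (with $\Z$ under its natural order). I would then set $\rho_{\mathfrak{m}} := \iota \circ \pi$, which is an $\ell$-homomorphism of $S$ onto $\Z$ with kernel $\mathfrak{m}$; this settles existence.

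Finally, for uniqueness, suppose $\rho \colon S \to \Z$ is any $\ell$-homomorphism onto $\Z$ with kernel $\mathfrak{m}$. By the first isomorphism theorem $\rho$ factors as $\rho = \iota' \circ \pi$ for some $\ell$-isomorphism $\iota' \colon T \to \Z$; by the uniqueness clause of \cref{l:unique-iso} we get $\iota' = \iota$, and therefore $\rho = \rho_{\mathfrak{m}}$.
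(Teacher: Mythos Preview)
Your proof is correct and follows essentially the same route as the paper: pass to the quotient $S/\mathfrak{m}$, check it is a nontrivial totally ordered Specker $\ell$-group, and invoke \cref{l:unique-iso}. The only difference is that the paper obtains total order by citing the standard fact that maximal $\ell$-ideals are prime (\cite[Corollary 9.2.4]{bkw}), whereas you reprove the equivalent statement that simple abelian $\ell$-groups are totally ordered; your argument for this is fine, and your explicit unpacking of existence and uniqueness via the first isomorphism theorem is a welcome addition to the paper's terser treatment.
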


\begin{proof}
	Since any maximal ideal of $S$ is prime  
(\cite[Corollary 9.2.4]{bkw}),
	$S/\mathfrak m$ is a nontrivial totally ordered
	$\ell$-group.
	By \cref{l:maps}, $S/\mathfrak m$ is a Specker $\ell$-group.
By \cref{l:unique-iso}, there is a unique $\ell$-isomorphism $\iota_\mathfrak{m} \colon S/\mathfrak{m} \to \Z$.
\end{proof}

\begin{lemma}
 \label{lemma:compute-rho}
	Let $(S, u)$ be a unital Specker $\ell$-group, $\m$ a maximal ideal, $\rho_\m \colon S \to \Z$ the unique surjective $\ell$-homomorphism whose kernel is $\m$, and $s_S$ the greatest singular element of $S$.
	We then have:
	\begin{enumerate}[label = (\roman*)]
	
		\item
		$ \rho_\mathfrak{m}(s_S) = 1$.
	
		\item For every $g \in S$, $\rho_\m(g)$ is the unique $j \in \Z$ 
		such that $g - j  s_S \in \mathfrak{m}$.
	
	\end{enumerate}
\end{lemma}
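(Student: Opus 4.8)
The plan is to establish (i) first, since (ii) will then follow by a short formal computation. The key input for (i) is \cref{l:maps}: as $\rho_\m$ is a surjective $\ell$-homomorphism, it carries singular elements to singular elements, so $\rho_\m(s_S)$ is a singular element of $\Z$. The first step is therefore to determine the singular elements of $\Z$. In $\Z$ with the natural order one has $a \wedge b = \min(a,b)$, so an element $s \geq 0$ is singular precisely when $\min(a, s-a) = 0$ for all $0 \leq a \leq s$; taking $a = 1$ shows this fails as soon as $s \geq 2$, whence the singular elements of $\Z$ are exactly $0$ and $1$. Consequently $\rho_\m(s_S) \in \{0,1\}$, and it remains only to exclude the value $0$.

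To rule out $\rho_\m(s_S) = 0$ I would argue by contradiction. If $\rho_\m(s_S) = 0$, then $s_S$ lies in the kernel $\m$ of $\rho_\m$. Now every singular element $t$ of $S$ satisfies $0 \leq t \leq s_S$, since $t \geq 0$ by definition and $s_S$ is the \emph{greatest} singular element; because $\m$ is a convex subgroup (being an $\ell$-ideal) and contains both $0$ and $s_S$, convexity forces $t \in \m$. Thus $\m$ contains all singular elements of $S$. But $S$ is generated as a group by its singular elements and $\m$ is a subgroup, so $\m = S$, contradicting the standing assumption that $\m$ is a proper ideal. Hence $\rho_\m(s_S) = 1$, which proves (i). I expect this exclusion step to be the only genuine obstacle; the identification of the singular elements of $\Z$ and everything in (ii) is purely formal.

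For (ii), fix $g \in S$ and put $j = \rho_\m(g)$. Using (i) together with the fact that $\rho_\m$ is a group homomorphism, $\rho_\m(g - j \cdot s_S) = \rho_\m(g) - j \cdot \rho_\m(s_S) = j - j = 0$, so $g - j \cdot s_S$ lies in the kernel $\m$; this gives existence with witness $j = \rho_\m(g)$. For uniqueness, if $g - j' \cdot s_S \in \m$ for some $j' \in \Z$, then applying $\rho_\m$ and invoking (i) again yields $0 = \rho_\m(g) - j' \cdot \rho_\m(s_S) = \rho_\m(g) - j'$, so $j' = \rho_\m(g)$. Hence $\rho_\m(g)$ is the unique integer $j$ with $g - j \cdot s_S \in \m$, completing the argument.
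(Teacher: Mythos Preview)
Your proof is correct and follows essentially the same strategy as the paper's: both invoke \cref{l:maps} to get $\rho_\m(s_S)\in\{0,1\}$, and both reduce (ii) to an immediate consequence of (i). The only difference is in how $\rho_\m(s_S)=0$ is excluded. The paper argues directly from surjectivity: since the singular elements generate $S$ and $\rho_\m$ is onto, their images generate $\Z$, so some singular element hits $1$, and then order-preservation forces the greatest one to hit $1$. You instead argue by contradiction via convexity: if $s_S\in\m$ then every singular element lies in $\m$ by convexity, whence $\m=S$. Both routes are short and valid; yours trades the explicit use of surjectivity for the convexity of $\ell$-ideals, but neither gains anything substantial over the other.
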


\begin{proof}
	(i) By \cref{l:maps}, $\rho_\m \colon S \to \Z$ preserves singular elements.
	Then any singular element of $S$ is mapped either to $0$ or to $1$.
	Since the set of singular elements of $S$ generates $S$ and $\rho_\m$ is surjective, the image under $\rho_\m$ of the set of singular elements of $S$ generates $\Z$.
	Therefore, there is a singular element of $S$ whose image under $\rho_\m$ is $1$.
Since $\rho_\m$ is order-preserving, the image under $\rho_\m$
 of the greatest singular element of $S$ is $1$.
	
	(ii)
	This immediately follows from (i).
\end{proof}

\medskip
 Throughout, the set  $\Z$ is equipped 
 with the discrete topology.
For any   boolean space $X$ and 
continuous function $u\colon X\to \Z_{>0}$  we will write 
$$(C_X, u)$$
to denote the unital
$\ell$-group $C_X$ of all continuous $\Z$-valued
functions on $X$,  equipped with the  unit 
$u$.
From the compactness of  $X$  and the discreteness of the topology
of  $\Z$ it follows that the range of $u$ (and in general of any continuous function from $X$ to $\Z_{>0}$) is finite.

\begin{notation} 
\label{n:natural}
	Let $(S,u)$ be a unital Specker $\ell$-group.
	For any $g\in S$  we define the map 
	\begin{align*}
		g_S^\natural
		\colon  \maxspec(S) &\longrightarrow
		\Z\\
		\m & \longmapsto \rho_{\m}(g),
	\end{align*} 
	where $\rho_\m$ is the unique surjective $\ell$-homomorphism from $S$ to $\Z$ with kernel $\m$,
	as given by  \cref{theorem:unique-rho}.
	In other words (by \cref{lemma:compute-rho}),
	\begin{equation}
	\label{equation:natural}
	\mbox{
		$g_S^\natural (\mathfrak m) = $   
		the unique  \,\,\,$ j \in \Z$ \,\,\,such that 
		$g - j  s_S  \in  \mathfrak m$,}
	\end{equation}
	where $s_S$ is the greatest singular element of $S$.
\end{notation}

\begin{lemma} \label{lemma:natural-is-0}
	Let $(S,u)$ be a unital Specker $\ell$-group. For any $g \in S$ and any $\m \in \maxspec(S)$, we have $g^{\natural}(\m) = 0$ if and only if $g \in \m$. 
\end{lemma}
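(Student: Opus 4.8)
The plan is simply to unwind the definition of $g^\natural$. By \cref{n:natural}, we have $g^\natural(\m) = \rho_\m(g)$, where $\rho_\m \colon S \to \Z$ is the unique surjective $\ell$-homomorphism whose kernel is $\m$, as provided by \cref{theorem:unique-rho}. Here the word \emph{kernel} means the preimage of $\{0\}$. Hence, for any $g \in S$, the condition $g^\natural(\m) = 0$ reads $\rho_\m(g) = 0$, which by the very definition of kernel is equivalent to $g \in \ker \rho_\m = \m$. This establishes both implications at once.

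Alternatively, I would argue from the reformulation recorded in \eqref{equation:natural}, namely that $g^\natural(\m)$ is the unique $j \in \Z$ such that $g - j \cdot s_S \in \m$. If $g^\natural(\m) = 0$, then taking $j = 0$ gives $g = g - 0 \cdot s_S \in \m$; conversely, if $g \in \m$, then $j = 0$ already witnesses $g - j \cdot s_S \in \m$, so by the uniqueness clause we get $g^\natural(\m) = 0$.

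I expect no genuine obstacle here: the statement is an immediate consequence of the fact that $\m$ is, by construction, the kernel of $\rho_\m$. The only point to be careful about is that $g^\natural$ is defined through $\rho_\m$ and that the defining property of $\rho_\m$ in \cref{theorem:unique-rho} is precisely that its kernel equals $\m$; once this is invoked, the equivalence is purely definitional.
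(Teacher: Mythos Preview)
Your proposal is correct and essentially identical to the paper's own proof: the paper simply observes that $g^{\natural}(\m) = \rho_\m(g)$ is $0$ if and only if $g \in \ker(\rho_\m) = \m$, which is exactly your first argument. Your alternative via \eqref{equation:natural} is also fine but not needed.
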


\begin{proof}
	The value 
	$g^{\natural}(\m) = \rho_\m(g)$
	 is $0$ if and only if $g \in \ker(\rho_\m) = \m$.  
\end{proof}

\begin{theorem}
	\label{theorem:accozzaglia}
	Let $(S,u)$ be a unital Specker $\ell$-group and $s_S$
	its greatest singular
	element.  We then have:
	\begin{enumerate}[label = (\roman*)]
	
		\item
		For any $\mathfrak n\in \maxspec(S)$ there is 
		a unique unital $\ell$-isomorphism
		of   the maximal quotient $(S,u)/\mathfrak n$ onto the unital
		$\ell$-group $(\Z, u^\natural_S(\mathfrak{n}))$.
		
		\item
		For any $g\in S$,  the map $ g_S^\natural
		\colon  \maxspec(S)\to\Z$ is continuous, i.e.\
		$g^\natural_S \in C_{\maxspec(S)}$.
		
		\item
		 $\maxspec(S)$ is a boolean space,
		and  the map\,\,\, $^\natural\colon
		g\in (S,u)\mapsto g_S^\natural\in \Z^{\maxspec(S)}$
		is a  unital $\ell$-isomorphism
		of  $(S,u)$ onto  $(C_{\maxspec(S)},u^\natural)$.\footnote{For a different proof  see
		\cite[Proposition 2.7(ii)]{mun-sl}, which makes use of  
		\cite[Lemma 2.6]{mun-sl}. In this latter lemma
		the obviously necessary assumption that $C_X$
		is separating (i.e., that the compact space $X$
		 is a boolean space) was inadvertently omitted. This has no effect on
		the results of that paper. }
	
		\item
		For any boolean space $X$  the singular elements of $C_X$
		 are precisely the characteristic functions of clopen subsets of $X$.
		Consequently, the set \,\,$\mathsf{sing}(S)$\,\,\,  of  singular elements of  $(S, u)$,
		with the
		lattice structure inherited from 
		$S$ by restriction,  is
		a  boolean algebra with top element  $s_S$ and
		bottom element the zero element of $S$. 
		This boolean algebra is
		isomorphic to the algebra
		of clopen sets of $\maxspec(S)$. 
		An isomorphism is given by the map 
		$$\mathsf{supp}\colon s\in \mathsf{sing}(S)
		\mapsto \{\mathfrak m\in \maxspec(S)\mid s/\mathfrak m>0\} = \{\mathfrak m\in \maxspec(S)\mid s \notin \mathfrak m\}.$$

		\item
		Every boolean space  is homeomorphic to the
		maximal spectral space of some   unital Specker $\ell$-group.
		
	\end{enumerate}
\end{theorem}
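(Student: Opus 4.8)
The plan is to realize an arbitrary boolean space $X$ as $\maxspec(C_X)$ for a suitable choice of unit, namely to take the unital $\ell$-group $(C_X, \mathbf 1)$ where $\mathbf 1$ is the constant function of value $1$. First I would check that this is indeed a unital Specker $\ell$-group. Since $X$ is compact and $\Z$ is discrete, every $g\in C_X$ has finite range, so each fibre $g^{-1}(n)$ is clopen and $g=\sum_n n\cdot \chi_{g^{-1}(n)}$ is an integer combination of characteristic functions of clopen sets. By part (iv) these characteristic functions are exactly the singular elements of $C_X$, so $C_X$ is generated as a group by its singular elements, i.e.\ it is a Specker $\ell$-group; and $\mathbf 1$ is plainly a unit, since every $g$ satisfies $\lvert g\rvert\leq n\cdot \mathbf 1$ for $n$ a bound on its (finite) range.

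Next I would introduce the comparison map. For $x\in X$ the evaluation $\ev_x\colon C_X\to\Z$, $g\mapsto g(x)$, is a surjective $\ell$-homomorphism (surjective because $C_X$ contains the constants), so its kernel $\mathfrak m_x=\{g\in C_X\mid g(x)=0\}$ is a maximal ideal. This defines a map $\eta\colon X\to\maxspec(C_X)$, $x\mapsto \mathfrak m_x$. Injectivity is immediate from total disconnectedness: distinct points $x\neq y$ are separated by a clopen set $U$, and then $\chi_U\in \mathfrak m_y\setminus\mathfrak m_x$. Continuity is equally direct: for each $g\in C_X$ the preimage $\eta^{-1}(\{\mathfrak m\mid g\notin\mathfrak m\})=\{x\in X\mid g(x)\neq 0\}$ is open, and by the description of the topology on $\maxspec(C_X)$ these sets form a basis.

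The main obstacle is surjectivity of $\eta$, i.e.\ showing that every maximal ideal is an evaluation kernel. Given $\mathfrak m\in\maxspec(C_X)$, I would restrict $\rho_{\mathfrak m}$ to the singular elements. By part (iv) this restriction is a homomorphism from the boolean algebra of clopen subsets of $X$ onto $\{0,1\}$: it is a lattice homomorphism because $\rho_{\mathfrak m}$ is, it sends $\chi_X=s_S$ to $1$ by \cref{lemma:compute-rho}(i), and it respects complementation since $\chi_{X\setminus U}=\mathbf 1-\chi_U$. By Stone duality such a homomorphism is evaluation $\chi_U\mapsto\chi_U(x)$ at a unique point $x\in X$. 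Since the singular elements generate $C_X$ as a group and both $\rho_{\mathfrak m}$ and $\ev_x$ are group homomorphisms that agree on all of them, I conclude $\rho_{\mathfrak m}=\ev_x$, whence $\mathfrak m=\ker\rho_{\mathfrak m}=\mathfrak m_x=\eta(x)$.

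Finally, $\eta$ is a continuous bijection from the compact space $X$ onto $\maxspec(C_X)$, which is Hausdorff by part (iii). A continuous bijection from a compact space onto a Hausdorff space is a homeomorphism, so $X$ is homeomorphic to $\maxspec(C_X)$ and the proof is complete. The only place where genuine work is needed is the surjectivity step, and the key idea there is to transport an arbitrary maximal ideal to a point of $X$ through the Stone dual of the boolean algebra of singular elements, then upgrade the agreement on singular generators to agreement on all of $C_X$.
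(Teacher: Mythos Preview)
Your proof is correct and complete; the surjectivity step via the boolean algebra of singular elements is clean, and the appeal to parts (iii) and (iv) is legitimate since those are established independently of (v).

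Your route differs from the paper's. The paper also takes $(C_X,\mathbf 1)$ as the witness but then argues indirectly: it invokes the $\Gamma$ functor (via \cite{mun-tac}) to pass to the boolean algebra $B$ of $\{0,1\}$-valued continuous functions on $X$, cites \cite[Theorem 4.16(iv)]{mun11} to get $X\cong\maxspec(B)$, and finally uses preservation properties of $\Gamma$ from \cite[Theorem 7.2.2]{cigdotmun} to obtain $\maxspec(C_X)\cong\maxspec(B)$. Your argument is more self-contained: you build the comparison map $\eta$ by hand, and for surjectivity you restrict $\rho_{\mathfrak m}$ to the singular generators to get a boolean homomorphism, identify the corresponding point of $X$ by ordinary Stone duality, and then propagate agreement from generators to all of $C_X$. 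What your approach buys is independence from the $\Gamma$-functor machinery and the external references; what the paper's approach buys is brevity once those tools are on the table. It is worth noting that the paper later (in \cref{lemma:three-maps}) constructs essentially the same map $\eta=\mathfrak M_X$ and proves it is a homeomorphism, though its surjectivity argument there uses compactness and zero sets rather than your boolean-homomorphism trick.
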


\begin{proof}
	(i)   Immediate from  Lemma \ref{lemma:compute-rho}.
	
	\smallskip
	(ii)    Pick a basic open set  $C\subseteq \Z$, 
	say $C=\{c\},\,\,c\in \Z$,
	with the intent of showing that  $(g_S^\natural)^{-1}(C)$
	is an open subset of  $\maxspec(S)$.
	As a matter of fact, 
	\begin{align*}
		(g_S^\natural)^{-1}(C)&=\{\mathfrak m\in
		\maxspec(S)\mid g_S^\natural(\mathfrak m)\in C\}
		= \{\mathfrak m\in \maxspec(S)\mid g_S^\natural(\mathfrak m) = c\}\\
		&= \{\mathfrak m\in \maxspec(S)\mid {g-c s_S}\in {\mathfrak m}\}.
	\end{align*}
	Since $(S,u)$ is a unital Specker $\ell$-group, 
	every element $l\in S$ can be written as a linear combination of 
	finitely many singular elements  $s_1,\dots,s_n\in S$ 
	with integer coefficients $k_1,\dots, k_n$.
	For any $\m \in \maxspec(S)$ and  $i \in \{1, \dots, n\}$
	 the element ${s_i}^\natural_S(\m)$ is a singular element of $\Z$, and hence it is either 
	$0$ or $1$; this is so because 
	${s_i}^\natural_S(\m)$
	 is the image of $s_i$ under the surjective 
	$\ell$-homomorphism $\rho_\m$ of \cref{theorem:unique-rho},
	which preserves singular elements by \cref{l:maps}.
	Therefore, the range of the map
	$l_S^\natural\colon \maxspec(S) \to\Z $
	 is contained in $\{\sum_{i=1}^n k_ie_i \mid (e_i)_{i=1}^n \in \{0,1\}^n\}$, which
	is a finite subset of $\Z$. 
	This holds in the particular case when $l= g-c s_S$.
	By  definition of  $\maxspec(S)$,
	the set $ \{\mathfrak m\in \maxspec(S)\mid {g
		-c s_S}\in {\mathfrak m}\}$ is a (subbasic) closed subset of  $\maxspec(S)$.
	It is also an open subset of $\maxspec(S)$, 
	because, as we have just seen,
	the map 
	$(g-c s_S)^\natural\colon \maxspec(S) \to\Z $
	has only finitely many   values, and each of
	them is  attained over a closed subset of  $\maxspec(S)$.
	Hence,  each fiber of $(g-c s_S)^\natural $  is clopen. 
	We conclude that 
	$(g^\natural_S)^{-1}(C)=\{\mathfrak m\in
	\maxspec(S)\mid g^\natural_S(\mathfrak m)=c\}$
	is  an  open set of $\maxspec(S)$, as desired to prove that  
	$g^\natural_S$ is continuous.

	\smallskip

	(iii)  The boolean space $X$ in the representation 
	\cite[Corollary 2.12]{bez}  of
	any  unital Specker $\ell$-group  $(G,u)$  
	is homeomorphic 
	to the maximal spectral space of   $(G,u)$. 
	This follows, e.g., from \cite[Theorem 4.16(iv)]{mun11}.
	Surjectivity follows from \cite[Corollary 2.12]{bez}, or 
	\cite[Theorem 13.5.3]{bkw}.

	\smallskip
	(iv) 
	That the singular elements of $C_X$ are precisely 
	the characteristic functions of the clopen subsets of $X$ is straightforward.
	By (iii), $(S, u)$ is isomorphic to $(C_{\maxspec(S)},u^\natural)$.
	The singular elements of $C_{\maxspec(S)}$ 
	are the characteristic functions of the clopen sets of $\maxspec(S)$.
	By \cref{lemma:natural-is-0}, for every $g \in S$ the support of the function $g^\natural \in C_{\maxspec(S)}$ is $\{\m \in \maxspec(S) \mid g \notin \m\}$.
		Therefore, for every singular element $s \in S$, $s^\natural$ is the characteristic function of $\{\m \in \maxspec(S) \mid s \notin \m\}$.
	(Compare with  \cite[Theorem 13.5.3]{bkw}.)

	\smallskip
	
	(v)  Let $X$ be a boolean space. Let  $(C_X, 1)$ be the unital
	Specker $\ell$-group of all continuous integer-valued functions on $X$, with the
	constant function 1 over $X$ as the distinguished unit of $C_X$. 
	Since this is also the greatest singular element of $C_X$, from the
	main result of
	\cite{mun-tac} it follows that the  $\Gamma$ functor  transforms  $(C_X, 1)$
	into the boolean algebra $B$ consisting of all continuous
	$\{0,1\}$-valued functions on $X$. 
	Both $(C_X, 1)$ and  $B$ are separating.
	As a particular case of  \cite[Theorem 4.16(iv)]{mun11}, $X$ is
	homeomorphic to $ \maxspec(B)$. 
	By   \cite[Theorem 7.2.2]{cigdotmun}, 
	the preservation properties of $\Gamma$ now yield a homeomorphism
	$ \maxspec(C_X)\cong \maxspec(B)$.
	In conclusion, we have the desired homeomorphisms
	$
	X\cong \maxspec(C_X)\cong \maxspec(B).
	$
\end{proof}

\begin{corollary}[{\cite[Proposition 3.4]{con}}] 
 \label{corollary:hyperarchimedean}
   Every unital Specker
 $\ell$-group $(S,u)$ is  {\em hyper\-ar\-chim\-edean}, in the sense that
  any prime ideal
 of $(S,u)$ is maximal.
 \end{corollary}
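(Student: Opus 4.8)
The plan is to reduce everything to the classification of totally ordered Specker $\ell$-groups that is already available. Let $\mathfrak p$ be a prime ideal of $(S,u)$; by the standing convention of the paper $\mathfrak p$ is proper, so the quotient $S/\mathfrak p$ is nontrivial. First I would use the standard fact that a proper $\ell$-ideal is prime exactly when its quotient is totally ordered, so that $S/\mathfrak p$ is a nontrivial totally ordered $\ell$-group (this is the same fact invoked through \cite[Corollary 9.2.4]{bkw} in the proof of \cref{theorem:unique-rho}). Being a homomorphic image of the Specker $\ell$-group $S$, the quotient $S/\mathfrak p$ is itself a Specker $\ell$-group by \cref{l:maps}. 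Thus $S/\mathfrak p$ is a nontrivial totally ordered Specker $\ell$-group, and \cref{l:unique-iso} supplies an $\ell$-isomorphism $S/\mathfrak p \cong \Z$.

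Next I would observe that $\Z$, with its natural order, is a simple $\ell$-group: its only $\ell$-ideals are $\{0\}$ and $\Z$. Indeed, any nonzero $\ell$-ideal of $\Z$ is a nonzero convex subgroup, hence contains some integer $n>0$, and then by convexity contains $1$, so it must be all of $\Z$. Finally I would invoke the usual order-preserving bijection, induced by the quotient map $S \twoheadrightarrow S/\mathfrak p$, between the $\ell$-ideals of $S$ containing $\mathfrak p$ and the $\ell$-ideals of $S/\mathfrak p$. Since $S/\mathfrak p \cong \Z$ admits no $\ell$-ideal strictly between $\{0\}$ and the whole group, there is no ideal of $S$ strictly between $\mathfrak p$ and $S$; that is, $\mathfrak p$ is maximal, which is the assertion.

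The argument is essentially routine once \cref{l:maps} and \cref{l:unique-iso} are in place, so I do not expect a genuine obstacle. The only points deserving a moment's care are the standard identification of prime ideals with those having a totally ordered quotient, and the simplicity of $\Z$ as an $\ell$-group; both are classical and carry no real difficulty. An alternative route would be to transport the claim through the isomorphism $(S,u)\cong(C_{\maxspec(S)},u^\natural)$ of \cref{theorem:accozzaglia}(iii) and exploit the finite range of each $g^\natural_S$, but the quotient argument above is shorter and avoids any topological detour.
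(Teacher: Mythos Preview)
Your proof is correct and takes a genuinely different route from the paper's. The paper invokes \cref{theorem:accozzaglia}(iii) to identify $(S,u)$ with $(C_X,u_X)$ for a boolean multispace $(X,u_X)$, then observes that every element of $C_X$ has finite range, so for all $0\le f,g\in C_X$ there is $n$ with $nf\wedge g=(n+1)f\wedge g$; it then appeals to the equivalence \cite[Theorem 14.1.2 (i)$\Leftrightarrow$(vi)]{bkw} characterizing hyperarchimedean $\ell$-groups. Your argument instead works directly with a prime ideal $\mathfrak p$, reuses \cref{l:maps} and \cref{l:unique-iso} to get $S/\mathfrak p\cong\Z$, and concludes from the simplicity of $\Z$. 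Your approach is more self-contained---it needs only the two elementary lemmas already in the paper and the standard correspondence theorem, whereas the paper's proof imports both the representation theorem and an external characterization of hyperarchimedeanness. Amusingly, the ``alternative route'' you sketch and then discard in your final paragraph is precisely the one the paper chose.
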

 
 \begin{proof}
 In view of   Theorem \ref{theorem:accozzaglia}(iii),
 it suffices to argue assuming that $(S,u)$ has the form
 $(C_X,u_X)$ for some boolean multispace $(X, u_X)$.
 Since every map $h\in C_X$ is integer-valued and has  a finite range, for 
any  $0\leq f,g\in C_X$ there is $n\in \Z_{>0}$  such that
$$nf\wedge g = (n+1)f\wedge g.$$
  By  \cite[Theorem 14.1.2 (i)$\Leftrightarrow$(vi)]{bkw},
this condition is equivalent to saying that  $S$ is hyperarchimedean.
\end{proof}

\begin{corollary}
\label{corollary:misto}
Let   $(S,u)$ be a unital Specker
 $\ell$-group.
 Every  ideal of $S$ is the intersection of all maximal
ideals of $S$ containing it.\footnote{This is true also for the improper ideal, as long as one uses the convention that the intersection of an empty family of subsets of $S$ is $S$.}
\end{corollary}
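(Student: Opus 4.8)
The plan is to deduce the statement from the representation theorem \cref{theorem:accozzaglia}(iii), reducing everything to the group of continuous $\Z$-valued functions on a boolean space. Fix an ideal $\mathfrak j$ of $S$. The inclusion $\mathfrak j \subseteq \bigcap\{\mathfrak m \in \maxspec(S) \mid \mathfrak j \subseteq \mathfrak m\}$ is trivial, so the whole content lies in the reverse inclusion, equivalently in the separation statement: for every $g \in S \setminus \mathfrak j$ there is a maximal ideal $\mathfrak m \supseteq \mathfrak j$ with $g \notin \mathfrak m$.

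First I would pass to the quotient. Writing $\pi \colon S \to S/\mathfrak j$ for the canonical surjection and $\bar u = \pi(u)$, I would check that $(S/\mathfrak j, \bar u)$ is again a unital Specker $\ell$-group: the image of a unit is a unit (from $\lvert g \rvert \le nu$ one gets $\lvert \pi(g) \rvert \le n\bar u$), and $S/\mathfrak j$ is Specker by \cref{l:maps}. Hence \cref{theorem:accozzaglia}(iii) applies and yields a unital $\ell$-isomorphism $(S/\mathfrak j, \bar u) \cong (C_Y, \bar u^\natural)$ with $Y = \maxspec(S/\mathfrak j)$ a boolean space.

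Next I would use that the intersection of all maximal ideals of $C_Y$ is $\{0\}$: each $y \in Y$ gives the maximal ideal $\{f \in C_Y \mid f(y) = 0\}$ (the kernel of evaluation at $y$, a surjection onto $\Z$), and since $C_Y$ is separating a function lying in all of these vanishes identically. Transporting this back along the isomorphism, the intersection of all maximal ideals of $S/\mathfrak j$ is $\{0\}$. By the correspondence theorem, the maximal ideals of $S/\mathfrak j$ are exactly the $\pi(\mathfrak m)$ with $\mathfrak m$ a maximal ideal of $S$ containing $\mathfrak j$, and $\pi^{-1}$ commutes with intersections; thus
\[
\bigcap\{\mathfrak m \in \maxspec(S) \mid \mathfrak j \subseteq \mathfrak m\} = \pi^{-1}\Bigl(\bigcap\{\text{maximal ideals of } S/\mathfrak j\}\Bigr) = \pi^{-1}(\{0\}) = \mathfrak j.
\]
When $\mathfrak j$ is proper, $S/\mathfrak j$ is nontrivial and so $Y \neq \emptyset$, which is all the argument needs; the improper ideal $\mathfrak j = S$ is then covered by the stated convention, since no maximal ideal contains $S$ and the empty intersection is $S$.

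I do not expect a serious obstacle here: the representation theorem does the heavy lifting, and the only point requiring care is the bookkeeping of the correspondence between maximal ideals of $S$ above $\mathfrak j$ and maximal ideals of the quotient, together with the compatibility of $\pi^{-1}$ with arbitrary intersections. One could alternatively bypass the quotient altogether by invoking hyperarchimedeanicity (\cref{corollary:hyperarchimedean}) and the classical fact that in any $\ell$-group every $\ell$-ideal is the intersection of the prime ideals containing it, but the route through \cref{theorem:accozzaglia}(iii) keeps the argument self-contained within the paper.
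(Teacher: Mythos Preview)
Your argument is correct. The paper, however, takes exactly the alternative route you mention in your last paragraph: it simply combines \cref{corollary:hyperarchimedean} (every prime ideal of a unital Specker $\ell$-group is maximal) with the classical fact that in any $\ell$-group every ideal is the intersection of the prime ideals containing it. That is the entire proof in the paper---two lines.

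Your approach via the quotient and the representation theorem is a genuinely different decomposition. What it buys is self-containment: you avoid importing the ``every ideal is an intersection of primes'' fact from the general theory of $\ell$-groups, at the price of a longer argument that essentially reproves semisimplicity of $S/\mathfrak j$ by hand through \cref{theorem:accozzaglia}(iii). The paper's route is shorter and more conceptual once one accepts that background fact. One small remark on your write-up: the appeal to ``$C_Y$ is separating'' is not quite the right justification---what you actually use is that a function lying in every evaluation kernel $\{f \mid f(y)=0\}$ vanishes identically, which is immediate and does not require separation. This is only a wording issue and does not affect the validity of the proof.
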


\begin{proof}
 This follows from 
  Corollary \ref{corollary:hyperarchimedean},  since
in any  $\ell$-group 
 every ideal is the intersection of all prime ideals containing it.
\end{proof}

\section{Boolean multispaces and the functor \texorpdfstring{$\SS$}{S}}

\begin{definition}
\label{definition:bms}
 The {\em category $\mathsf{Bms}$ of  
 boolean multispaces}  is defined by:

\medskip
\noindent
$\mathsf{Bms}$-$\mathsf{OBJECT}$:  
A pair  $(X,u_X)$ with  $X$ a boolean space 
and $u_X$ a  continuous function from $X$ to
$\Z_{>0}= \{1,2,\dots\}$.

For $x \in X$, $u(x)$ is called the \emph{multiplicity} of $x$. The function $u$ is called the \emph{multiplicity (function)} on $X$.

\medskip
\noindent
$\mathsf{Bms}$-$\mathsf{MORPHISM}$ from a
$\mathsf{Bms}$-object $(X,u_X)$ to a $\mathsf{Bms}$-object
$(Y,u_Y)$: a  continuous
function $\gamma\colon X\to Y$ such that 
the function $\zeta_\gamma$ defined by  
$$\zeta_\gamma = \frac{u_X}{u_Y(\gamma)}$$
is $\Z_{>0}$-valued.
In other words,
 for all $x \in X$, the positive integer $u_Y(\gamma(x))$ 
 divides $u_X(x)$ (i.e., $\gamma$ decreases multiplicity 
 with respect to the divisibility order of $\Z_{>0}$).
We say that $\zeta_\gamma$ is the {\em multiplicity} of
$\gamma$.
Note that $\zeta_\gamma$ is continuous.

\medskip 
The composition of $\mathsf{Bms}$-morphisms
$\eta_1\colon (X,u_X)\to (Y,u_Y)$ and
$\eta_2\colon (Y,u_Y)\to (Z,u_Z)$  is
the morphism
$\eta_2(\eta_1)\colon (X,u_X)\to (Z,u_Z)$.

\smallskip
We recall that every continuous function between compact 
Hausdorff spaces (in particular, every $\Bms$-morphism)
 is a closed map, meaning that it maps closed sets to closed sets
 \cite[Theorem 3.1.12]{eng}.
Therefore, a  $\mathsf{Bms}$-isomorphism from 
$(X,u_X)$ to $(Y,u_Y)$ amounts to a continuous bijection $\eta \colon X \to Y$ such that
$u_Y(\eta(x))=u_X(x)$ for all $x\in X$.
\end{definition}

\begin{definition}
	\label{definition:bms-to-uslg}\hfill
	\begin{enumerate}[label = (\roman*)]
	
		\item The functor $\mathcal S\colon \mathsf{Bms}\to
		  \uSlg^{\mathrm{op}}$ sends
		  every  $\mathsf{Bms}$-object $(X,u)$ to
		  the $ \uSlg$-object 
		  $(C_X, u)$:
		\begin{equation}
		\label{equation:S-on-objects}
		\mathcal S\colon (X,u)\mapsto (C_X,u),
		\,\,\,\text{ for any $(X,u)\in \mathsf{Bms}$.} 
		\end{equation}

		\item Furthermore,  $\mathcal S$ transforms any
	    $\mathsf{Bms}$-morphism
		$
		\gamma\colon (W,u_W)\to (V,u_V)
		$ 
		into the unital $\ell$-homomorphism    
		$
		\mathcal S_\gamma\colon \mathcal S(V,u_V)
		\to  \mathcal S(W,u_W)
		$
		given  by
		\begin{equation}
		\label{equation:esse-gamma}
		\mathcal S_\gamma\colon f\in C_V\mapsto 
		f(\gamma) \frac{u_W}{u_V(\gamma)} \in C_W,
		\,\,\,\mbox{ for any $f\in C_V$.} 
		\end{equation}
	
	\end{enumerate}

\end{definition}

This is indeed a well-defined assignment on morphisms:

\begin{proposition}
\label{proposition:routine}
Adopt the above notation.

	\begin{enumerate}[label = (\roman*)]
	
		\item
		\medskip $\mathcal S_\gamma(u_V)= 
	u_V(\gamma)  \frac{u_W}{u_V(\gamma)} = u_W$,
	whence $\mathcal S_\gamma$ preserves units.
	
	\medskip
	
		\item
		$\mathcal S_\gamma(0)=  0$.
	
	\medskip
		\item $\mathcal S_\gamma(f+g)=\mathcal S_\gamma(f)+\mathcal S_\gamma(g)$,
	and the like for $\vee, 	\wedge$ and $-$, which shows that
	 $\mathcal S_\gamma$ is a unital $\ell$-homomorphism of 
	 $(C_V,u_V)$ into $(C_W, u_W)$. \qed
	
	\end{enumerate}

 \end{proposition}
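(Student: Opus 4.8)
The plan is to verify the three claims in order, exploiting the fact that $\mathcal S_\gamma$ is defined pointwise as multiplication of a pullback by the fixed function $\zeta_\gamma = u_W/u_V(\gamma)$, which is $\mathbb Z_{>0}$-valued and continuous by the definition of a $\Bms$-morphism. Throughout I would write $\zeta = u_W/u_V(\gamma)$ for brevity, so that $\mathcal S_\gamma(f) = f(\gamma)\cdot \zeta$ for every $f \in C_V$, and I would keep in mind that all computations take place in the $\ell$-group $C_W$, where the operations $+,-,\vee,\wedge$ are defined pointwise and $\zeta$ is a strictly positive integer-valued function.

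For part (i), I would simply substitute $f = u_V$ into the definition: since $u_V(\gamma)$ is the pullback of the unit $u_V$ along $\gamma$, we get $\mathcal S_\gamma(u_V) = u_V(\gamma)\cdot \zeta = u_V(\gamma)\cdot \frac{u_W}{u_V(\gamma)} = u_W$, the cancellation being legitimate because $u_V(\gamma)$ is everywhere nonzero. This is the unit-preservation condition. Part (ii) is equally direct: the pullback of the constant function $0$ is $0$, and $0\cdot \zeta = 0$ in $C_W$.

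The substance is in part (iii), where I must check that $\mathcal S_\gamma$ respects all four $\ell$-group operations. For $+$ and $-$ this is the distributivity of pointwise multiplication by $\zeta$ over pointwise addition and subtraction: $(f+g)(\gamma)\cdot\zeta = f(\gamma)\cdot\zeta + g(\gamma)\cdot\zeta$, and similarly for $-$. The one point deserving care is the preservation of $\vee$ and $\wedge$, since multiplication does not in general commute with lattice operations. Here the key observation is that $\zeta$ takes values in $\mathbb Z_{>0}$, hence is \emph{strictly positive} at every point of $W$; multiplication by a positive scalar is order-preserving, so for each $x \in W$ we have $\bigl((f\vee g)(\gamma)\cdot\zeta\bigr)(x) = \zeta(x)\cdot\max\{f(\gamma(x)),g(\gamma(x))\} = \max\{\zeta(x)f(\gamma(x)),\zeta(x)g(\gamma(x))\}$, which is exactly $\bigl(\mathcal S_\gamma(f)\vee \mathcal S_\gamma(g)\bigr)(x)$, and dually for $\wedge$.

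The main obstacle, such as it is, is bookkeeping rather than conceptual: one must confirm at each step that the resulting function again lies in $C_W$ (continuity and integer-valuedness), but this is immediate since $f(\gamma)$ is continuous and integer-valued as a composite, $\zeta$ is continuous and integer-valued by hypothesis, and $C_W$ is closed under the pointwise ring and lattice operations. Combining the additive, subtractive, and lattice compatibilities with parts (i) and (ii) shows that $\mathcal S_\gamma$ is a unital $\ell$-homomorphism from $(C_V,u_V)$ to $(C_W,u_W)$, completing the proof.
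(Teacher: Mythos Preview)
Your proposal is correct and is exactly the routine verification the authors have in mind; the paper itself offers no proof beyond the inline \qed, treating the statement as immediate from the pointwise definition of $\mathcal S_\gamma$. Your only addition is making explicit the one nontrivial point---that preservation of $\vee$ and $\wedge$ relies on $\zeta_\gamma$ being strictly positive---which is precisely the right thing to spell out.
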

 
 \medskip

  \begin{proposition}
 \label{proposition:essentially-surjective}
  The functor 
   $\mathcal S$ is {\em  essentially surjective}, in the sense that 
 for any  $(S,u)\in \uSlg^{\mathrm{op}}$ there is  
 $(X,v)\in \mathsf{Bms}$ such that
 $\mathcal S(X,v)\cong (S,u)$.
 \end{proposition}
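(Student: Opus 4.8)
The plan is to take $X = \maxspec(S)$ as the underlying boolean space and to equip it with the multiplicity function $v = u^\natural_S$, obtained by applying the natural representation of \cref{n:natural} to the distinguished unit $u$. The required isomorphism is then delivered essentially for free by \cref{theorem:accozzaglia}(iii), so the real content lies in checking that $(X, v)$ is a genuine $\mathsf{Bms}$-object, i.e.\ that $v$ is a continuous function landing in $\mathbb Z_{>0}$.

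First I would recall from \cref{theorem:accozzaglia}(iii) that $\maxspec(S)$ is a boolean space, so $X$ qualifies as the space component. Next, by \cref{theorem:accozzaglia}(ii) the map $u^\natural_S \colon \maxspec(S) \to \mathbb Z$ is continuous, hence $v$ is a continuous $\mathbb Z$-valued function; it then remains only to verify that its values lie in $\mathbb Z_{>0}$.

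The one point requiring an argument is that $v(\mathfrak m) = \rho_{\mathfrak m}(u) > 0$ for every maximal ideal $\mathfrak m$. Since $u \geq 0$ and $\rho_{\mathfrak m}$ is order-preserving, $\rho_{\mathfrak m}(u) \geq 0$, so it suffices to exclude the value $0$. By \cref{lemma:natural-is-0}, $u^\natural_S(\mathfrak m) = 0$ would mean $u \in \mathfrak m$; but $u$ is a strong unit, so every $g \in S$ satisfies $\lvert g\rvert \leq n u$ for some $n \in \mathbb Z_{>0}$, forcing $\lvert g\rvert \in \mathfrak m$ and hence $g \in \mathfrak m$, i.e.\ $\mathfrak m = S$, contradicting the standing convention that ideals are proper. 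Thus $v$ takes values in $\mathbb Z_{>0}$, and $(X,v) = (\maxspec(S), u^\natural_S)$ is a boolean multispace.

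Finally, unwinding \cref{equation:S-on-objects} gives $\mathcal S(X, v) = (C_{\maxspec(S)}, u^\natural_S)$, and \cref{theorem:accozzaglia}(iii) exhibits the map $^\natural$ as a unital $\ell$-isomorphism of $(S, u)$ onto this unital Specker $\ell$-group, which completes the argument. I do not expect any genuine obstacle: essentially all the work has already been carried out in \cref{theorem:accozzaglia}, and the only new observation needed—that a strong unit lies outside every (proper) maximal ideal and therefore has strictly positive image under each $\rho_{\mathfrak m}$—is elementary.
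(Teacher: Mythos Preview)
Your proposal is correct and follows essentially the same approach as the paper: both take $(X,v)=(\maxspec(S),u^\natural)$ and invoke \cref{theorem:accozzaglia}(iii) for the isomorphism. The paper's proof is simply more terse, asserting $(\maxspec(S),u^\natural)\in\mathsf{Bms}$ directly from \cref{theorem:accozzaglia}, whereas you spell out the easy verification that $u^\natural$ lands in $\mathbb Z_{>0}$.
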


 \begin{proof}
 By  Theorem \ref{theorem:accozzaglia}(iii)
 we have $(\maxspec(S),u^\natural) \in  \mathsf{Bms}$ and 
 $(S,u) \cong \mathcal{S}(\maxspec(S),u^\natural)$,
 whence  $(\maxspec(S),u^\natural)$ is the desired $ \mathsf{Bms}$-object.
 \end{proof}

 \medskip

 \begin{proposition}
 \label{proposition:faithful} 
 \,\,\,The functor $\mathcal S$ is {\em faithful}:
 For any two distinct $\mathsf{Bms}$-morphisms  
 $\gamma \not=\gamma' \colon
  (W,u_W)\to (V,u_V)$,
 the unital $\ell$-hom\-o\-mor\-ph\-isms $\mathcal S_{\gamma}$ and
  $\mathcal S_{\gamma'}$ are distinct. 
\end{proposition}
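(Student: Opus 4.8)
The plan is to witness the inequality $\mathcal S_\gamma \neq \mathcal S_{\gamma'}$ by exhibiting a single test function $f \in C_V$ on which the two homomorphisms disagree. Since $\gamma \neq \gamma'$ are functions $W \to V$, I would start by choosing a point $x \in W$ with $\gamma(x) \neq \gamma'(x)$, and set $y = \gamma(x)$ and $y' = \gamma'(x)$, so that $y \neq y'$ in $V$. The goal then reduces to finding $f \in C_V$ for which $\mathcal S_\gamma(f)$ and $\mathcal S_{\gamma'}(f)$ take different values at $x$.

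The natural source of such test functions is the clopen structure of $V$. Because $V$ is a boolean space (totally disconnected compact Hausdorff), distinct points are separated by clopen sets, so I would pick a clopen $U \subseteq V$ with $y \in U$ and $y' \notin U$. Its characteristic function $\chi_U$ is $\{0,1\}$-valued and continuous, hence $\chi_U \in C_V$; this is the test function. Unwinding the definition \eqref{equation:esse-gamma}, for any $f \in C_V$ and $x \in W$ we have
$$
\mathcal S_\gamma(f)(x) = f(\gamma(x)) \cdot \frac{u_W(x)}{u_V(\gamma(x))}.
$$
Evaluating at $f = \chi_U$ and the chosen $x$ gives $\mathcal S_\gamma(\chi_U)(x) = \chi_U(y)\cdot \zeta_\gamma(x) = \zeta_\gamma(x)$, while $\mathcal S_{\gamma'}(\chi_U)(x) = \chi_U(y')\cdot \zeta_{\gamma'}(x) = 0$.

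The one point deserving care—arguably the only real content beyond bookkeeping—is checking that $\mathcal S_\gamma(\chi_U)(x)$ is genuinely nonzero, so that it cannot coincide with the value $0$ on the other side. This is exactly where the $\mathsf{Bms}$-morphism condition enters: since $\gamma$ is multiplicity-decreasing, the factor $\zeta_\gamma(x) = u_W(x)/u_V(\gamma(x))$ is a positive integer, in particular $\geq 1 > 0$. Hence $\mathcal S_\gamma(\chi_U)(x) = \zeta_\gamma(x) \neq 0 = \mathcal S_{\gamma'}(\chi_U)(x)$, so $\mathcal S_\gamma(\chi_U) \neq \mathcal S_{\gamma'}(\chi_U)$ and therefore $\mathcal S_\gamma \neq \mathcal S_{\gamma'}$, as required. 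I expect no substantive obstacle here; the argument is essentially a pointwise evaluation, and the work is confined to invoking total disconnectedness of $V$ to separate $y$ from $y'$ and recalling that the multiplicity factor never vanishes.
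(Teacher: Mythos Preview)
Your proof is correct and follows essentially the same approach as the paper's: pick a point where $\gamma$ and $\gamma'$ differ, use that $C_V$ separates the two image points, and observe that the multiplicity factor is a nonzero positive integer so the evaluations disagree. The only cosmetic difference is that you explicitly invoke total disconnectedness to produce the separating function as a characteristic function $\chi_U$, whereas the paper simply says ``$C_V$ is separating'' and picks an $f$ with $f(\gamma(y))=0$, $f(\gamma'(y))\neq 0$; the roles of which side is zero are swapped, but the argument is the same.
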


\begin{proof}  $\gamma\not=\gamma'$ means that there is
$y\in W$ with $\gamma(y)\not=\gamma'(y)$. 
Since $\gamma(y)$ and $\gamma'(y)$ are distinct and $C_V$
is  separating, there is a map  $f\in C_V$ with $f(\gamma(y))=0$ and $f(\gamma'(y))\not=0$.
Now
$$
(\mathcal S_\gamma(f))(y)= 
 f(\gamma(y))  \frac{u_W(y)}{u_V(\gamma(y))}  =0,\,\,\,
\mbox{ but }\,\,\,
(\mathcal S_{\gamma'}(f))(y)= 
f(\gamma'(y))  \frac{u_W(y)}{u_V(\gamma'(y))} \not=0,
$$
which shows that $\mathcal S_\gamma\not=\mathcal S_{\gamma'}$.
\end{proof}

\begin{lemma} \hfill
\label{lemma:three-maps}

\smallskip
	\begin{enumerate}[label = (\roman*)]
	
		\item
		For  $X$  a fixed but otherwise arbitrary  boolean space, 
 let  $(C_X,u)$ be the
unital Specker $\ell$-group of all   $\Z$-valued
continuous functions on $X$, with the distinguished order unit $u$.
Letting 
 $Z_Xf=\{x\in X\mid f(x)=0\}$, the map   $\dot Z_X\colon  \maxspec(C_X) \to X$  given by
  $$
  \mathfrak m\in \maxspec(C_X)\mapsto 
\dot Z_X\mathfrak m = x_\mathfrak m= \mbox{\,the unique element of }
\bigcap\{Z_Xf\mid f\in \mathfrak m\}
$$
is a homeomorphism of $\maxspec(C_X)$ onto $X$. 
   Its inverse  $\mathfrak M_X$  maps  any
$x\in X$ to the maximal ideal $\mathfrak M_X(x) = 
\mathfrak m_x$ of $C_X$ given by all functions of
$C_X$ vanishing at $x$. 

\medskip
	\item
	For every $x \in X$ and $g \in C_X$ we have the identity
 $g(x) = g^\natural(\mathfrak{M}_X(x))$.
Thus the functions $\mathfrak{M}_X$ and $\dot Z_X$
yield  $\Bms$-isomorphisms between $(X, u)$ and $(\maxspec(C_X), u^\natural)$. 
	\end{enumerate} 
\end{lemma}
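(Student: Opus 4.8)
The plan is to exhibit the two maps as explicit mutual inverses and to verify continuity of one of them, after which the fact that both $X$ and $\maxspec(C_X)$ are compact Hausdorff (the latter by \cref{theorem:accozzaglia}(iii)) promotes any continuous bijection between them to a homeomorphism. Throughout, the only structural input I need about $C_X$ is that every $f \in C_X$ has finite range with clopen fibres, so that each zero set $Z_X f$ is clopen, and that the constant function $1$ is a strong order unit; the distinguished unit $u$ plays no role until part (ii).

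First I would check that $\mathfrak{M}_X$ is well defined, i.e.\ that $\m_x = \{f \in C_X \mid f(x)=0\}$ is a maximal ideal. This is clean: $\m_x$ is the kernel of the evaluation $\ev_x \colon C_X \to \Z$, $f \mapsto f(x)$, a surjective $\ell$-homomorphism (surjective because the constants lie in $C_X$), so $C_X/\m_x \cong \Z$ has no proper nonzero ideals and $\m_x$ is maximal. As a by-product, uniqueness in \cref{theorem:unique-rho} forces $\rho_{\m_x} = \ev_x$, which is exactly what part (ii) will use.

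The heart of the argument is that $\dot Z_X$ is well defined, i.e.\ that for each maximal ideal $\m$ the set $\bigcap\{Z_X f \mid f \in \m\}$ is a single point. For nonemptiness I would argue by compactness: were this intersection empty, finitely many of the clopen zero sets $Z_X f_1, \dots, Z_X f_n$ would already intersect in the empty set; then $h = \lvert f_1\rvert \vee \dots \vee \lvert f_n\rvert$ lies in $\m$ and vanishes nowhere, so $h \geq 1$ and the unit lands in the solid ideal $\m$, contradicting properness. For uniqueness, if $x$ and $y$ both lay in the intersection then $\m \subseteq \m_x$ and $\m \subseteq \m_y$, whence $\m = \m_x = \m_y$ by maximality; but distinct points of the boolean space $X$ are separated by some clopen $U$, and $\chi_U$ then distinguishes $\m_x$ from $\m_y$, a contradiction. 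The same inclusion-plus-maximality step ($x \in \bigcap_{f \in \m} Z_X f$ gives $\m \subseteq \m_x$, hence $\m = \m_x$) shows directly that $\dot Z_X$ and $\mathfrak{M}_X$ are mutually inverse.

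For continuity it suffices to treat $\mathfrak{M}_X$: the preimage of the subbasic open set $\{\m \mid g \notin \m\}$ is $\{x \in X \mid g(x) \neq 0\} = X \setminus Z_X g$, which is clopen, so $\mathfrak{M}_X$ is a continuous bijection between compact Hausdorff spaces and therefore a homeomorphism, proving (i). For (ii), the identity $\rho_{\m_x} = \ev_x$ established above gives $g^\natural(\mathfrak{M}_X(x)) = \rho_{\m_x}(g) = g(x)$; specialising to $g = u$ yields $u^\natural(\mathfrak{M}_X(x)) = u(x)$, so $\mathfrak{M}_X$ preserves multiplicities and is a $\Bms$-isomorphism $(X,u) \to (\maxspec(C_X), u^\natural)$ in the sense of \cref{definition:bms}, with inverse $\dot Z_X$. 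I expect the nonemptiness half of the singleton claim—the compactness argument forcing the unit into $\m$—to be the one genuinely delicate point; everything else is essentially bookkeeping.
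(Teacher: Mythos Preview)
Your proof is correct and follows essentially the same route as the paper: both establish that $\mathfrak{M}_X$ and $\dot Z_X$ are mutual inverses via the inclusion-plus-maximality step, use a finite-intersection-property compactness argument for nonemptiness (the paper uses $\sum_i |f_i|$ where you use $\bigvee_i |f_i|$, which is immaterial), check continuity of $\mathfrak{M}_X$ on a subbasis to upgrade the bijection to a homeomorphism between compact Hausdorff spaces, and derive (ii) from the uniqueness clause in \cref{theorem:unique-rho} applied to $\ev_x$. The only cosmetic difference is that the paper computes preimages of the closed subbasic sets $F_g$ rather than their open complements.
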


\begin{proof} 
(i) 
For every $x \in X$  one easily sees that $\mathfrak{M}_X(x)$
 is  a maximal ideal of $C_X$.
Moreover,
we have the identity
\begin{equation} \label{equation:intersection}
	\bigcap\{Z_X f \mid f \in \mathfrak{M}_X(x)\} = \{x\}.
\end{equation}
For the  $\subseteq$-inclusion, 
 since $X$ is a boolean space, for every $y \in X$ different from 
 $x$ there is $f \in C_X$ such that $f(x) = 0$ and $f(y) \neq 0$.
 The $\supseteq$-inclusion is trivial.

\medskip
We now proceed as follows:

\medskip
\noindent{\it Claim:}
For any $\mathfrak m \in \maxspec(C_X)$ and  
$x \in \bigcap\{Z_X f \mid f \in \mathfrak m\}$  we have 
	\begin{equation} \label{eq:m-is-M}
		\mathfrak{m} = \mathfrak{M}_X(x)
	\end{equation}
	and
	\begin{equation} \label{equation:intersection-is}
		\bigcap \{Z_X f \mid f \in \mathfrak{m}\} = \{x\}.
	\end{equation}

\medskip	
Indeed, by  hypothesis 
$\mathfrak{m} \subseteq \mathfrak{M}_X(x)$, 
whence  \eqref{eq:m-is-M} follows from $\mathfrak{m}$ being maximal.  Then 
\eqref{equation:intersection-is} follows from \eqref{eq:m-is-M} and \eqref{equation:intersection}.
Our claim is thus settled.

\smallskip 
 
Next, we must  prove that $\dot Z_X$ is well-defined. To this purpose,
 let $\mathfrak{m} \in \maxspec(C_X)$, 
with the intent of proving  that the set $\bigcap \{Z_X f \mid f \in \mathfrak{m}\}$ is a singleton.
By  our claim, 
$\bigcap \{Z_X f \mid f \in \mathfrak{m}\}$ has at most one element.
Let us prove that it has at least one element.
For every finite subset $S \subseteq \mathfrak {m}$, we have
\[
\bigcap\{Z_X f \mid f \in S\} = Z_X \left(\sum_{f \in S}\lvert f \rvert\right),
\]
which is nonempty because $\sum_{f \in S}\lvert f \rvert \in \mathfrak{m}$ and $\mathfrak{m}$ is proper.
The  compactness of $X$ now yields  $\bigcap\{Z_X f \mid f \in \mathfrak m\} \neq \emptyset$.
In conclusion, $\bigcap\{Z_X f \mid f \in \mathfrak m\}$ is a singleton, and 
  $\dot Z_X$ is well-defined.
  
 \smallskip

From \eqref{equation:intersection} 
it follows that, for all $x \in X$, \,\, $x = \dot{Z}_X\mathfrak{M}_X(x)$.
On the  other hand, by
 our claim,  for all $\mathfrak{m} \in \maxspec(C_X)$, 
 \,\,  $\mathfrak{m} = \mathfrak{M}_X\dot{Z}_X \mathfrak{m}$.  
 We have  thus shown  that 
 $\dot{Z}_X$ and $\mathfrak{M}_X$ are mutually inverse functions, yielding
  a  {\it bijection}
between $\maxspec(C_X)$ and $X$.

\medskip
There remains to prove that $\mathfrak M_X$
 is a homeomorphism. Since $\mathfrak{M}_X$ is a bijection between compact 
  Hausdorff spaces, it is enough to prove that $\mathfrak M_X$
 is continuous.
Let  $C$ be a subbasic closed set in $\maxspec(C_X)$,
 say
$C= F_f = $  the  set of all maximal ideals of  $C_X$  to which 
$f$ belongs, where $f$ is a fixed but otherwise
arbitrary element of $C_X$. Then
\begin{align*}
	\mathfrak M_X^{-1}(C) & = \{x \in X \mid \mathfrak M_X(x) \in C\} \\
	& = \{x \in X \mid \{g \in C_X \mid g(x) = 0\} \in F_f\}\\
	& = \{x \in X \mid f \in \{g \in C_X \mid g(x) = 0\}\}\\
	& = \{x \in X \mid f(x) = 0\}\\
	& = Z_X f,
\end{align*}
 a closed set by the continuity of $f$. This shows that 
 $\mathfrak M_X$
 is a homeomorphism. 

\medskip
(ii)
For any  $x \in X$ and $g \in C_X$ we will  prove $g(x) = g^\natural(\mathfrak{M}_X(x))$.
The evaluation function
\begin{align*}
	\mathrm{ev}_x \colon C_X & \longrightarrow \Z\\
	f & \longmapsto f(x)
\end{align*}
is  surjective and its  kernel coincides with  $\mathfrak{M}_X(x)$.
Thus the evaluation function
coincides with  the function $\rho_{\mathfrak{M}_X(x)}$ 
 in \cref{theorem:unique-rho} and in the definition of $^\natural$ (\cref{n:natural}).
Therefore, for every $g \in C_X$, 
$$g^\natural(\mathfrak{M}_X(x)) 
= \mathrm{ev}_x(g)= g(x),
$$
 as desired.
In  particular,   $u(x) = u^\natural(\mathfrak{M}_X(x))$, whence both
 $\mathfrak{M}_X$ and its inverse $\dot Z_X$ are $\Bms$-isomorphisms.
 \end{proof}
 
In what follows we will need the result stating  that the preimage under a unital $\ell$-homomorphism of a maximal ideal
of a unital $\ell$-group is maximal.
While this is folklore, we will give a self-contained proof in Corollary
 \ref{corollary:pullback-of-maximal}. As a preliminary step,
 in the following lemma
 we will give an elementary characterization of maximal ideals.
For readers familiar with commutative algebra, we mention
 that this characterization can be seen as an analogue 
 in our setting of the following characterization of maximal ideals
  in a commutative ring $R$ with unit: an ideal $\mathfrak{j}$
   is maximal if and only if $1 \notin \mathfrak{j}$ and for all 
   $a \in R \setminus \mathfrak{j}$ there is $b \in R$ such that 
   $1 - ba \in \mathfrak{j}$ (i.e., $R/\mathfrak{j}$ is a field) \cite[p.~3]{atimac}.

\begin{lemma}\label{lemma:characterization-maximal}
	Let $(G, u)$ be a unital $\ell$-group and $\mathfrak{j}$ 
	a (possibly improper) ideal of $G$.
	The following conditions are equivalent:\footnote{It is well known 
	that these conditions are also equivalent to the existence 
	of an injective unital $\ell$-homomorphism from 
	$(G/\mathfrak{m}, u/\mathfrak{m})$ to $(\mathbb{R}, 1)$. 
	Moreover, if $G$ is a Specker $\ell$-group, these conditions 
	are also equivalent to $G/\mathfrak{j}$ being isomorphic 
	to $\Z$ as an $\ell$-group (the nontrivial direction being \cref{theorem:unique-rho}).}
	\begin{enumerate}[label = (\roman*)]
	
		\item
		$\mathfrak j$ is maximal.
		
		\item
		$u \notin \mathfrak j$ (i.e.,\ $\mathfrak j$ is proper) and for all 
		$a \in G \setminus \mathfrak j$ there is 
	$n \in \Z_{\geq 0}$ such that $(u - n  \lvert a \rvert) \lor 0 \in \mathfrak j$ (i.e., $n  \lvert a \rvert /\mathfrak{j} \geq u / \mathfrak{j}$).	
	
	\end{enumerate}
\end{lemma}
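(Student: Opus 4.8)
The plan is to prove the two implications directly inside $G$, the only nonroutine ingredient being the standard description of the convex $\ell$-subgroup generated by $\mathfrak{j}$ together with a single element. First I would dispose of the improper case: if $\mathfrak{j} = G$, then $u \in \mathfrak{j}$, so the first clause of (ii) fails, while an improper ideal is never maximal, so (i) fails as well; hence both conditions are false and the equivalence holds vacuously. From now on I assume $\mathfrak{j}$ is proper, so that, $u$ being a unit, $u \notin \mathfrak{j}$ holds automatically and matches the first clause of (ii).

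The key fact I would record is the explicit form of the ideal $\langle \mathfrak{j}, a\rangle$ generated by $\mathfrak{j}$ and an element $a \in G$, namely
\[
\langle \mathfrak{j}, a\rangle = \{x \in G \mid \lvert x\rvert \leq y + n\lvert a\rvert \text{ for some } 0 \leq y \in \mathfrak{j} \text{ and } n \in \mathbb{Z}_{\geq 0}\}.
\]
This is a routine verification: the right-hand side is a convex $\ell$-subgroup containing both $\mathfrak{j}$ and $a$, and is contained in every ideal containing them; it is standard in the $\ell$-group literature \cite{bkw}.

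For (i) $\Rightarrow$ (ii), I would take $a \in G \setminus \mathfrak{j}$. Then $\langle \mathfrak{j}, a\rangle$ strictly contains $\mathfrak{j}$, so by maximality it equals $G$; in particular $u \in \langle \mathfrak{j}, a\rangle$, whence $u \leq y + n\lvert a\rvert$ for some $0 \leq y \in \mathfrak{j}$ and $n \in \mathbb{Z}_{\geq 0}$. This gives $0 \leq (u - n\lvert a\rvert)\lor 0 \leq y \in \mathfrak{j}$, and convexity of $\mathfrak{j}$ yields $(u - n\lvert a\rvert)\lor 0 \in \mathfrak{j}$, which is exactly the second clause of (ii).

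For (ii) $\Rightarrow$ (i), I would let $\mathfrak{j}'$ be any ideal with $\mathfrak{j} \subsetneq \mathfrak{j}'$ and pick $a \in \mathfrak{j}' \setminus \mathfrak{j}$. By (ii) there is $n$ with $(u - n\lvert a\rvert)\lor 0 \in \mathfrak{j} \subseteq \mathfrak{j}'$, while $\lvert a\rvert \in \mathfrak{j}'$ gives $n\lvert a\rvert \in \mathfrak{j}'$. Since $u \leq n\lvert a\rvert + \bigl((u - n\lvert a\rvert)\lor 0\bigr)$ with both summands nonnegative and lying in $\mathfrak{j}'$, convexity forces $u \in \mathfrak{j}'$, and as $u$ is a unit, $\mathfrak{j}' = G$. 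Hence $\mathfrak{j}$ is maximal. The only step requiring care is the displayed description of $\langle \mathfrak{j}, a\rangle$ invoked in the first implication; everything else is elementary manipulation with convexity and the order unit.
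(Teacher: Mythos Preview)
Your proof is correct and follows essentially the same approach as the paper: both arguments reduce maximality to the condition that $u$ lies in the ideal generated by $\mathfrak{j}\cup\{a\}$ for every $a\notin\mathfrak{j}$, and then unpack this using the standard description of that generated ideal to arrive at the inequality $u \le y + n\lvert a\rvert$ and its reformulation $(u - n\lvert a\rvert)\lor 0 \in \mathfrak{j}$. Your version is slightly more detailed (separating the improper case and the two implications explicitly), but the substance is the same.
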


\begin{proof}
	The ideal $\mathfrak{j}$ is maximal if and only if it is proper (a condition that is easily seen to be equivalent to $u \notin \mathfrak j$) and, for all $a \in G \setminus \mathfrak j$, the ideal generated by $\mathfrak{j} \cup \{a\}$ is $G$ (i.e., it contains $u$). The ideal generated by $\mathfrak{j} \cup \{a\}$ contains $u$ if and only if there are $n \in \Z_{\geq 0}$ and $j \in \mathfrak{j}$ such that $u \leq j + n  \lvert a \rvert$, which 
	is the case  if and only if there is $n \in \Z_{\geq 0}$ such that $u - n  \lvert a \rvert$ is below some element of $\mathfrak{j}$, which is equivalent to $(u - n  \lvert a \rvert) \lor 0 \in \mathfrak j$.
\end{proof}

While for commutative rings with unit it is not true that the preimage of a maximal ideal is a maximal ideal (as for the inclusion $\Z \hookrightarrow \mathbb{Q}$), for unital $\ell$-groups we have:
 
\begin{corollary}[Preimage of maximal is maximal]
 \label{corollary:pullback-of-maximal}
	Let $\psi \colon  (G, u) \to (H,v)$ be a unital $\ell$-homomorphism between unital $\ell$-groups.
	For every maximal ideal $\m$ of $H$
	the set $\psi^{-1}(\m) = \{g \in G \mid \psi(g) \in \m\}$ is a maximal ideal of $G$.
\end{corollary}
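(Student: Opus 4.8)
The plan is to reduce everything to the elementary characterization of maximal ideals provided by \cref{lemma:characterization-maximal}. The decisive feature of that characterization is that it is phrased entirely in terms of the $\ell$-group operations together with the distinguished unit---concretely, through the requirement that $(u - n\cdot\lvert a\rvert)\lor 0$ lie in the ideal---and all of these data are preserved by a unital $\ell$-homomorphism. I therefore expect the preimage to inherit maximality almost mechanically, once the term-by-term compatibility with $\psi$ is made explicit.

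First I would record the routine fact that $\mathfrak{j} := \psi^{-1}(\m)$ is a (possibly improper) ideal of $G$: the preimage of a convex $\ell$-subgroup under an $\ell$-homomorphism is again a convex $\ell$-subgroup, so $\mathfrak{j}$ is an ideal in the sense used throughout the paper. Next I would check properness, i.e.\ $u\notin\mathfrak{j}$. Since $\psi$ is unital we have $\psi(u)=v$, and since $\m$ is maximal it is proper, so $v\notin\m$; hence $u\notin\psi^{-1}(\m)=\mathfrak{j}$.

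The core of the argument is to verify condition (ii) of \cref{lemma:characterization-maximal} for $\mathfrak{j}$. Fix $a\in G\setminus\mathfrak{j}$; by definition this means $\psi(a)\notin\m$. Applying \cref{lemma:characterization-maximal} to the maximal ideal $\m$ of $H$ yields some $n\in\Z_{\geq 0}$ with $(v - n\cdot\lvert\psi(a)\rvert)\lor 0\in\m$. The key identity, which I would spell out using that $\psi$ preserves $+$, $-$, $\lor$, integer scalar multiples, and absolute value, and sends $u$ to $v$, is
\[
\psi\bigl((u - n\cdot\lvert a\rvert)\lor 0\bigr) = (v - n\cdot\lvert\psi(a)\rvert)\lor 0.
\]
Consequently $\psi\bigl((u - n\cdot\lvert a\rvert)\lor 0\bigr)\in\m$, i.e.\ $(u - n\cdot\lvert a\rvert)\lor 0\in\psi^{-1}(\m)=\mathfrak{j}$. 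Thus $\mathfrak{j}$ satisfies both clauses of condition (ii), and \cref{lemma:characterization-maximal} gives that $\mathfrak{j}$ is maximal.

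There is no genuine obstacle here; the only point requiring care is the displayed identity, which transports the witnessing integer $n$ for $\psi(a)$ in $H$ back to a witness for $a$ in $G$. This is precisely where the ring-theoretic analogue fails---there the defining condition for maximality, namely the existence of an inverse modulo the ideal, is not preserved under preimage---and it explains why the lemma was formulated through the order-theoretic term $(u - n\cdot\lvert a\rvert)\lor 0$ rather than through any multiplicative data.
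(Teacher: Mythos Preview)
Your proof is correct and follows exactly the approach the paper takes: the paper simply states that the corollary is an immediate consequence of \cref{lemma:characterization-maximal}, and your argument spells out precisely the details that make this immediate. The only difference is that you make explicit the term-by-term preservation under $\psi$ that the paper leaves implicit.
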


\begin{proof}
	This is an immediate consequence of \cref{lemma:characterization-maximal}.
	(For a proof  in the context of MV-algebras
	see \cite[Proposition 1.2.2]{cigdotmun}.
	For an alternative  proof,
		combine  \cite[Proposition 1.2.16(i)]{cigdotmun} with  \cite[Corollary 7.2.3]{cigdotmun}.)
\end{proof}
 
\begin{lemma}
 \label{lemma:B-on-morphisms}
	Let $\psi\colon  (S, u) \to (T,v)$ be a unital $\ell$-homomorphism between unital Specker $\ell$-groups.
	The map  
	\begin{align*}
		\psi^{-1} \colon (\maxspec(T), v^\natural) & \longrightarrow (\maxspec(S), u^\natural)\\
		\mathfrak{m} & \longmapsto \psi^{-1}(\m) = \{f\in S\mid \psi(f) \in \mathfrak m\}
	\end{align*}
	is continuous.
	Moreover, for any $\mathfrak{m} \in \maxspec(T)$ 
	 there is $k \in \Z_{>0}$ such that for all $g \in S$ 
	 we have the identity  $(\psi(g))^\natural(\m) = k  g^{\natural}(\psi^{-1}(\m))$.
	In  particular, $v^\natural(\m) = k  u^{\natural}(\psi^{-1}(\m))$, whence
  $\psi^{-1}$ is a $\Bms$-morphism.
\end{lemma}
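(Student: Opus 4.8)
The plan is to check in turn that $\psi^{-1}$ is well defined, continuous, and multiplicity-decreasing in the stated quantitative form. Well-definedness is immediate: for $\mathfrak{m} \in \maxspec(T)$ the set $\psi^{-1}(\mathfrak{m})$ is a maximal ideal of $S$ by \cref{corollary:pullback-of-maximal}, so $\psi^{-1}$ indeed maps $\maxspec(T)$ into $\maxspec(S)$. For continuity I would test against the subbasic closed sets $F_g = \{\mathfrak{n} \in \maxspec(S) \mid g \in \mathfrak{n}\}$ that generate the topology of $\maxspec(S)$: unwinding the definitions gives $(\psi^{-1})^{-1}(F_g) = \{\mathfrak{m} \in \maxspec(T) \mid \psi(g) \in \mathfrak{m}\} = F_{\psi(g)}$, a closed set, so $\psi^{-1}$ is continuous.

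The heart of the argument is the identity $(\psi(g))^\natural(\mathfrak{m}) = k \cdot g^\natural(\psi^{-1}(\mathfrak{m}))$. Fixing $\mathfrak{m} \in \maxspec(T)$ and writing $\mathfrak{n} = \psi^{-1}(\mathfrak{m})$, I would consider the composite $\ell$-homomorphism $\rho_{\mathfrak{m}} \circ \psi \colon S \to \Z$. Its kernel is $\{g \in S \mid \psi(g) \in \ker \rho_{\mathfrak{m}}\} = \psi^{-1}(\mathfrak{m}) = \mathfrak{n}$, and its image is a subgroup of $\Z$ that is nonzero, since $\rho_{\mathfrak{m}}(\psi(u)) = \rho_{\mathfrak{m}}(v) = v^\natural(\mathfrak{m}) > 0$ (the unit $v$ lies outside the proper ideal $\mathfrak{m}$, so this value is nonzero by \cref{lemma:natural-is-0}, and it is positive because $v \geq 0$). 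Hence the image is $k\Z$ for a unique $k \in \Z_{>0}$.

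Now composing $\rho_{\mathfrak{m}} \circ \psi$ with the $\ell$-isomorphism $k\Z \to \Z$, $n \mapsto n/k$, yields a \emph{surjective} $\ell$-homomorphism $S \to \Z$ whose kernel is still $\mathfrak{n}$; by the uniqueness clause of \cref{theorem:unique-rho} this map must coincide with $\rho_{\mathfrak{n}}$. Therefore $\rho_{\mathfrak{m}}(\psi(g)) = k \cdot \rho_{\mathfrak{n}}(g)$ for all $g \in S$, which in the $^\natural$ notation of \cref{n:natural} is exactly $(\psi(g))^\natural(\mathfrak{m}) = k \cdot g^\natural(\psi^{-1}(\mathfrak{m}))$. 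Specializing to $g = u$ and using $\psi(u) = v$ gives $v^\natural(\mathfrak{m}) = k \cdot u^\natural(\psi^{-1}(\mathfrak{m}))$, so $u^\natural(\psi^{-1}(\mathfrak{m}))$ divides $v^\natural(\mathfrak{m})$ for every $\mathfrak{m}$; combined with continuity, this is precisely the condition for $\psi^{-1}$ to be a $\Bms$-morphism.

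The well-definedness and continuity steps are routine. The step needing care is the multiplicity identity, and the obstacle is that $\rho_{\mathfrak{m}} \circ \psi$ need not be surjective, so \cref{theorem:unique-rho} cannot be invoked for it directly. The observation that unblocks this is that its image is a \emph{nonzero} subgroup $k\Z$ of $\Z$, so rescaling by $1/k$ restores surjectivity without changing the kernel; the single constant $k$ thus obtained then works simultaneously for all $g \in S$, as the statement demands.
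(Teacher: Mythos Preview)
Your proof is correct and follows essentially the same route as the paper: well-definedness via \cref{corollary:pullback-of-maximal}, continuity by computing the preimage of a subbasic closed set $F_g$ as $F_{\psi(g)}$, and the multiplicity identity by factoring the (not necessarily surjective) composite $\rho_{\mathfrak m}\circ\psi$ through the surjection $\rho_{\psi^{-1}(\mathfrak m)}$ to obtain multiplication by some $k\in\Z_{>0}$. The only cosmetic difference is that the paper phrases this last step as the existence of a unique $\ell$-homomorphism $\gamma\colon\Z\to\Z$ making a commutative square, whereas you identify the image of $\rho_{\mathfrak m}\circ\psi$ directly as $k\Z$ and rescale; both arguments are the same in substance.
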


\begin{proof}
	The map is well-defined because, by \cref{corollary:pullback-of-maximal}, for any  $\mathfrak m\in\maxspec(T)$ the set
	$\psi^{-1}(\mathfrak m)=\{f\in S\mid \psi(f)\in \mathfrak m\}$ 
	is a maximal ideal of $S$.
	
	\medskip
	We next prove the continuity of $\psi^{-1}$.
	We recall that a subbasis for the closed sets of $\maxspec(S)$ is given by the sets of the form
	$$
	F_f=\{\mathfrak n \in \maxspec(S)\mid
	f\in \mathfrak n\},
	$$
	for $f$  ranging over arbitrary
	elements of  $S$ (and analogously for $T$, mutatis mutandis).
	To prove that $\psi^{-1} \colon \maxspec(T) \to \maxspec(S)$ is continuous it is enough to prove that the preimage under the function $\psi^{-1}$ of any element of this subbasis for the closed sets of $\maxspec(S)$ is a closed subset of $\maxspec(T)$.
	Let $f \in S$. We have
	\begin{align*}
		(\psi^{-1})^{-1}(F_f)&= \{\mathfrak{m} \in \maxspec(T) \mid \psi^{-1}(\mathfrak{m}) \in F_f\} \\
		&= \{\mathfrak m\in \maxspec(T)\mid
		f\in  \psi^{-1}(\mathfrak m) \}\\
		&= \{\mathfrak m\in \maxspec(T)\mid \psi(f)\in  \mathfrak m\},
	\end{align*}
	a closed subset of $\maxspec(T)$.
	This concludes the proof of 
	the continuity of the map $ \psi^{-1} \colon \maxspec(T) \to \maxspec(S)$.
	
	\medskip
	There remains to prove
	\begin{equation}
	\label{equation:multiplicities}
 \psi^{-1} \,\, \mbox{decreases multiplicities}.
	\end{equation}

	Let $\mathfrak{m} \in \maxspec(T)$.
	By  Theorem \ref{theorem:unique-rho}, there is a unique surjective 
	$\ell$-homomorphism $\rho_\m \colon T \to \Z$ such that 
	$\ker \rho_\m = \mathfrak{m}$, and there is a unique surjective 
	$\ell$-homomorphism $\rho_{\psi^{-1}(\m)} \colon S \to \Z$ 
	such that $\ker \rho_{\psi^{-1}(\m)} = \psi^{-1}(\m)$.
	Since $\psi^{-1}(\m)$ is the kernel of the surjective 
	$\ell$-homomorphism $\rho_{\psi^{-1}(\m)} \colon S \to \Z$ 
	and is also the kernel of the composite 
	$S \xrightarrow{\psi} T \xrightarrow{\rho_\m} \Z$ (which 
	need not be surjective), there is a unique 
	$\ell$-homomorphism $\gamma \colon \Z \to \Z$ 
	such that the following diagram commutes:
	\[
	\begin{tikzcd}
		S \arrow{r}{\psi} \arrow[swap]{d}{\rho_{\psi^{-1}(\m)}}& T \arrow{d}{\rho_\m}\\
		\Z \arrow[dashed,swap]{r}{\gamma}& \Z
	\end{tikzcd}
	\]
	Any $\ell$-homomorphism from $\Z$ to $\Z$ 
	amounts to
	 multiplication by a number in $\Z_{\geq 0}$.  
	 Therefore, there is $k \in \Z_{\geq 0}$
	  such that, for every $j \in \Z$, $\gamma(j) = k  j$.
 
	It remains to show that $k\not=0$.
	Since surjective $\ell$-homomorphisms preserve (strong order)  units, $\rho_{\m}(v)$
	 is a (strong order) unit of $\Z$, and hence it is nonzero.
	We have
	\begin{align*}
		k  \rho_{\psi^{-1}(\m)}(u) 
		= \gamma(\rho_{\psi^{-1}(\m)}(u)) = \rho_\m(\psi(u)) = \rho_\m(v) \neq 0.
	\end{align*}
	Therefore, $k \neq 0$, as desired. 
	This settles \eqref{equation:multiplicities} and completes the proof.
\end{proof}

\begin{corollary}[of \,\cref{lemma:three-maps,lemma:B-on-morphisms}]
\label{corollary:psi-dual}
For any boolean multispaces
$(W,u_W)$ and $(V, u_V)$, and\,
$ \uSlg$-morphism
$$
\psi\colon (C_V,u_V)\to (C_W,u_W),
$$
let \,\,$\psi_{\rm dual}\colon W\to V$\,\, be 
  the composite function 
  $\dot Z_V\psi^{-1}\mathfrak M_W$. In more detail,
\begin{align*}
\psi_{\rm dual} \colon y\in W \,\,\,&\longmapsto_{\mathfrak M_W}&& \hspace{-6.7em}
\mathfrak m_y\in \maxspec(C_W)\\
{\,\,\,}&\longmapsto &&\hspace{-6.7em}\mathfrak \psi^{-1}(\mathfrak m_y) 
\in \maxspec(C_V)\\
\,\,\,&\longmapsto_{\dot Z_V} &&
\hspace{-6.7em}x= \dot Z_V\psi^{-1}\mathfrak M_W(y)	\in V.
\end{align*}
Then \,\,$\psi_{\rm dual}$ is  a $\Bms$-morphism
of  $(W, u_W)$  into  $(V, u_V)$.
\hfill{$\Box$}
 \end{corollary}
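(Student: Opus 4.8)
The plan is to recognize $\psi_{\rm dual}$ as a composite of three $\Bms$-morphisms and then invoke the fact that $\Bms$ is a category. By definition $\psi_{\rm dual} = \dot Z_V\,\psi^{-1}\,\mathfrak M_W$, so I would first display the three factors together with the boolean multispaces they connect:
\[
(W, u_W) \xrightarrow{\mathfrak M_W} (\maxspec(C_W), u_W^\natural) \xrightarrow{\psi^{-1}} (\maxspec(C_V), u_V^\natural) \xrightarrow{\dot Z_V} (V, u_V).
\]
The whole argument is then a matter of certifying each arrow and composing.

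For the two outer arrows I would apply \cref{lemma:three-maps}(ii), once to the boolean space $W$ and once to $V$: it gives that $\mathfrak M_W$ is a $\Bms$-isomorphism from $(W, u_W)$ onto $(\maxspec(C_W), u_W^\natural)$, and that $\dot Z_V$ is a $\Bms$-isomorphism from $(\maxspec(C_V), u_V^\natural)$ onto $(V, u_V)$; in particular both are $\Bms$-morphisms. For the middle arrow I would apply \cref{lemma:B-on-morphisms} to $\psi$, taking $S = C_V$ and $T = C_W$, which yields directly that $\psi^{-1}\colon (\maxspec(C_W), u_W^\natural) \to (\maxspec(C_V), u_V^\natural)$ is a $\Bms$-morphism. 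Since the target multispace of each arrow coincides with the source multispace of the next—each junction being a maximal spectral space $\maxspec(C_X)$ carrying the canonical multiplicity $u_X^\natural$ fixed in \cref{theorem:accozzaglia}(iii)—the three maps genuinely compose in $\Bms$, and a composite of $\Bms$-morphisms is again a $\Bms$-morphism (continuity is preserved, and the multiplicity-decreasing condition is preserved because divisibility in $\mathbb Z_{>0}$ is transitive). Hence $\psi_{\rm dual}$ is a $\Bms$-morphism from $(W, u_W)$ into $(V, u_V)$, as claimed.

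There is no genuinely hard step here: all the substance is carried by the two cited lemmas, and what remains is the bookkeeping that the multiplicity data match at the two composition points. If anything deserves care, it is ensuring that \cref{lemma:B-on-morphisms} is applied with source and target in the correct order, since $\mathcal S$ is contravariant: $\psi$ runs from $C_V$ to $C_W$, so $\psi^{-1}$ runs from $\maxspec(C_W)$ to $\maxspec(C_V)$, which is exactly the direction needed for $\psi_{\rm dual}$ to go from $W$ to $V$.
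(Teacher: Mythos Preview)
Your proposal is correct and matches the paper's approach exactly: the corollary is stated without proof (ending with $\Box$) precisely because it is the immediate composition of the three $\Bms$-morphisms supplied by \cref{lemma:three-maps}(ii) and \cref{lemma:B-on-morphisms}, and indeed the paper spells out this very decomposition at the start of the proof of \cref{proposition:full}.
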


 \bigskip

\section{\texorpdfstring{$\SS$}{S} is a categorical equivalence}
\label{section:full}

 \begin{proposition} [$\mathcal S$ is full]
 \label{proposition:full}
 For any boolean multispaces $(W,u_W)$ and $(V, u_V)$ and 
unital $\ell$-hom\-omor\-phism
$$\psi\colon \mathcal S(V, u_V)=(C_V,u_V)
\to  \mathcal S(W, u_W)=(C_W,u_W)$$
there exists a (unique by the faithfulness of $\mathcal S$, \cref{proposition:faithful})
$\mathsf{Bms}$-morphism
 $$
 \gamma \colon (W,u_W) \to (V,u_V)
 $$
such that $\psi=\mathcal S_\gamma$.
Specifically,
$\gamma$  coincides with the map    
  $\psi_{\rm dual}$.
\end{proposition}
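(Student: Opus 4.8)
The plan is to verify directly that $\psi = \mathcal{S}_\gamma$ for $\gamma = \psi_{\rm dual}$, the uniqueness of such a $\gamma$ being automatic from the faithfulness of $\mathcal{S}$ (\cref{proposition:faithful}). By \cref{corollary:psi-dual} we already know that $\psi_{\rm dual}$ is a genuine $\Bms$-morphism from $(W,u_W)$ to $(V,u_V)$, so no well-definedness issues remain and the only task is the equality of the two unital $\ell$-homomorphisms. Since $C_W$ is separating, it suffices to check that $\psi(f)$ and $\mathcal{S}_\gamma(f)$ agree at every point $y \in W$, for every $f \in C_V$; I would therefore fix $y$ and $f$ and compute both values through the dictionary between evaluation and maximal ideals.

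First I would translate evaluation into the $^\natural$ language. Writing $\mathfrak{m}_y = \mathfrak{M}_W(y)$ for the maximal ideal of functions of $C_W$ vanishing at $y$, \cref{lemma:three-maps}(ii) gives $g(y) = g^\natural(\mathfrak{m}_y)$ for all $g \in C_W$; in particular $\psi(f)(y) = (\psi(f))^\natural(\mathfrak{m}_y)$. Next I would invoke \cref{lemma:B-on-morphisms} for the homomorphism $\psi$ at the maximal ideal $\mathfrak{m}_y$: there is $k = k(y) \in \Z_{>0}$ with $(\psi(g))^\natural(\mathfrak{m}_y) = k \cdot g^\natural(\psi^{-1}(\mathfrak{m}_y))$ for all $g \in C_V$. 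By the very definition $\psi_{\rm dual} = \dot{Z}_V \psi^{-1} \mathfrak{M}_W$ together with the fact that $\mathfrak{M}_V$ and $\dot{Z}_V$ are mutually inverse (\cref{lemma:three-maps}(i)), we have $\psi^{-1}(\mathfrak{m}_y) = \mathfrak{M}_V(\gamma(y))$, so a second application of \cref{lemma:three-maps}(ii), this time over $V$, yields $g^\natural(\psi^{-1}(\mathfrak{m}_y)) = g(\gamma(y))$. Combining, $\psi(g)(y) = k \cdot g(\gamma(y))$ for every $g \in C_V$.

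The final step is to pin down $k$. Specializing to $g = u_V$ and using that $\psi$ preserves units, $\psi(u_V) = u_W$, so $u_W(y) = k \cdot u_V(\gamma(y))$, i.e. $k = u_W(y)/u_V(\gamma(y))$. Substituting this back gives $\psi(g)(y) = g(\gamma(y)) \cdot u_W(y)/u_V(\gamma(y)) = \mathcal{S}_\gamma(g)(y)$ for all $g$ and all $y$, whence $\psi = \mathcal{S}_\gamma$ as desired.

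I do not expect a serious obstacle: the substantive content is already isolated in \cref{lemma:B-on-morphisms} (existence of the local multiplier $k$) and in \cref{corollary:psi-dual} (that $\psi_{\rm dual}$ is a $\Bms$-morphism). The one point deserving attention is the identification of the multiplier $k$ produced by \cref{lemma:B-on-morphisms} at $\mathfrak{m}_y$ with the factor $u_W(y)/u_V(\gamma(y))$ occurring in the definition of $\mathcal{S}_\gamma$; this coincidence is forced by unitality through the $g = u_V$ specialization, and it is what reconciles the two a priori different descriptions of the multiplicity of $\gamma$. One must only keep $y$ fixed throughout, since $k$ depends on $y$.
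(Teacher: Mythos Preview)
Your proof is correct. The overall strategy matches the paper's: fix $y\in W$ and $f\in C_V$ and verify the pointwise identity $\psi(f)(y)=f(\gamma(y))\cdot u_W(y)/u_V(\gamma(y))$, with uniqueness coming from faithfulness and well-definedness of $\gamma=\psi_{\rm dual}$ from \cref{corollary:psi-dual}.

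The only real difference is in how the key identity $\psi(g)(y)=k\cdot g(\gamma(y))$ is obtained. You route it through the $^\natural$ machinery: translate evaluation at $y$ into $(\cdot)^\natural(\mathfrak m_y)$ via \cref{lemma:three-maps}(ii), invoke \cref{lemma:B-on-morphisms} to get the multiplier $k$, and translate back. The paper instead performs a short direct computation: writing $\psi(f)(x)/u_W(x)=p/q$, it observes that $\psi(qf-pu_V)(x)=0$, hence $qf-pu_V$ vanishes at $\psi_{\rm dual}(x)$ by the defining property of $\psi_{\rm dual}$, which immediately yields the identity. Your version is more modular (it reuses \cref{lemma:B-on-morphisms}, where the multiplier was already constructed); the paper's version is self-contained at this step and avoids the $^\natural$ translation layer. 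Both pin down $k$ the same way, by specializing to $g=u_V$ and using unitality. (A minor quibble: the appeal to ``$C_W$ is separating'' is unnecessary---two elements of $C_W$ are equal iff they agree pointwise, full stop.)
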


\begin{proof}	
As per \cref{corollary:psi-dual},  the $\Bms$-morphism $\psi_{\rm dual}$
is the composition of the $\Bms$-morphisms
	\[
	(W,u_W) \xrightarrow{\mathfrak{M}_W} (\maxspec (C_W),u_W^\natural)
	 \xrightarrow{\psi^{-1}}  (\maxspec (C_V), u_V^\natural) \xrightarrow{\dot Z_V} (V, u_V).
	\]
To get  the identity $\SS_{\psi_{\rm dual}} = \psi$
we must  prove 
$$
\mbox{ for all $f \in C_V$ and $x \in W$, $\SS_{\psi_{\rm dual}}(f)(x) = \psi(f)(x)$.
}
$$
To this purpose,  let us arbitrarily fix $f \in C_V$ and $x \in W$. 
	By the definition of the functor $\SS$ on morphisms we can write
	\[
	\SS_{\psi_{\rm dual}}(f)(x) = f(\psi_{\rm dual}(x))  \frac{u_W(x)}{u_V(\psi_{\rm dual}(x))}.
	\]
 There remains to prove
	\begin{equation} 
	\label{equation:tbs}
		\psi(f)(x) = f(\psi_{\rm dual}(x))  \frac{u_W(x)}{u_V(\psi_{\rm dual}(x))}.
	\end{equation}
By  definition of $\psi_{\rm dual}$, the element $\psi_{\rm dual}(x)$ 
is the unique $y \in V$ such that for any $h \in C_V$, 
\,\,\, $\psi(h)(x) = 0$\, implies \, $h(y) = 0$.
	Since $u_W(x) > 0$, 
	$$
	\mbox{$\frac{\psi(f)(x)}{u_W(x)}$ is a well-defined rational number, say 
	${p}/{q}$, with $p \in \Z$ and $q \in \Z_{>0}$.}
	$$
We then  have the identities
	\[
	\psi(q  f - p  u_V)(x) = q  \psi(f)(x) - p  \psi(u_V)(x) = q  \psi(f)(x) - p  u_W(x)= 0.
	\]
As a consequence, 
	\[
		0 = (q  f - p  u_V)(\psi_{\rm dual}(x)) = q  f(\psi_{\rm dual}(x)) - p  u_V(\psi_{\rm dual}(x)).
	\]
We have just proved 
	\[
	f(\psi_{\rm dual}(x)) = \frac{p}{q}  u_V(\psi_{\rm dual}(x)) 
	= \frac{\psi(f)(x)}{u_W(x)}  u_V(\psi_{\rm dual}(x)),
	\]
	which amounts to 
  \eqref{equation:tbs}.
\end{proof}

\begin{theorem}
\label{theorem:duality}
 The functor $\mathcal S$
 is an equivalence 
between  the category
 $\mathsf{Bms}$ of boolean multispaces and 
 the opposite  $ \uSlg^{\rm op}$
  of the category
 of unital Specker $\ell$-groups.  
\end{theorem}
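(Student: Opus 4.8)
The plan is to invoke the standard categorical criterion that a functor is an equivalence of categories precisely when it is full, faithful, and essentially surjective (see, e.g., \cite{ada, bor}). Since each of these three properties of $\mathcal S$ has already been isolated as a separate result in the preceding sections, the proof reduces to assembling those results; no genuinely new argument is required here.

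Concretely, I would proceed in three steps. First, \emph{essential surjectivity}: by \cref{proposition:essentially-surjective}, for every $(S,u) \in \uSlg^{\mathrm{op}}$ there is a boolean multispace $(X,v)$ with $\mathcal S(X,v) \cong (S,u)$; indeed one may take $(X,v) = (\maxspec(S), u^\natural)$, the isomorphism being supplied by \cref{theorem:accozzaglia}(iii). Second, \emph{faithfulness}: this is exactly \cref{proposition:faithful}, where distinct $\Bms$-morphisms are separated using the fact that $C_V$ is separating. Third, \emph{fullness}: this is \cref{proposition:full}, which moreover exhibits the morphism inducing a given $\psi$ as the concrete dual map $\psi_{\rm dual} = \dot Z_V \psi^{-1} \mathfrak M_W$.

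Having verified that $\mathcal S$ is full, faithful, and essentially surjective, the cited criterion yields at once that $\mathcal S$ is an equivalence between $\Bms$ and $\uSlg^{\mathrm{op}}$, which is the assertion of the theorem.

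I expect no real obstacle at this final stage, since all the substantive work was carried out earlier: the homeomorphism $\maxspec(C_X) \cong X$ and the $\Bms$-isomorphism of \cref{lemma:three-maps}, the continuity and multiplicity-decreasing property of $\psi^{-1}$ in \cref{lemma:B-on-morphisms}, and the computation identifying $\mathcal S_{\psi_{\rm dual}}$ with $\psi$ in \cref{proposition:full}. The only point deserving care is the bookkeeping of variance: $\mathcal S$ is contravariant as a functor landing in $\uSlg$, hence is phrased as a covariant functor into $\uSlg^{\mathrm{op}}$, and the three properties must be read with respect to this convention; but this is precisely how the cited propositions are stated, so the assembly is immediate.
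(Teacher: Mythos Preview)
Your proposal is correct and matches the paper's own proof exactly: the paper simply writes ``As a joint effect of \cref{proposition:essentially-surjective}, \cref{proposition:faithful}, \cref{proposition:full},'' invoking the same full--faithful--essentially-surjective criterion you describe. Your additional remarks on variance and on the supporting lemmas are accurate but go beyond what the paper records at this point.
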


\begin{proof} As a joint effect of Propositions
\ref{proposition:essentially-surjective}, 
\ref{proposition:faithful},
\ref{proposition:full}.
\end{proof}

\begin{corollary}
\label{corollary:stone}
  Stone duality is the particular case of  Theorem
  \ref{theorem:duality}
  when  $\mathcal S$ is  restricted to
  boolean multispaces with constant multiplicity $1$, and
   $ \uSlg$ is restricted to
 the category of  
  unital  Specker $\ell$-groups whose distinguished unit is
  singular.
 \end{corollary}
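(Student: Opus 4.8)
The plan is to exhibit the two full subcategories named in the statement, check that $\SS$ carries one onto the other, and then identify the two endpoints with the classical Stone categories. Write $\Bms_1$ for the full subcategory of $\Bms$ on the objects $(X,u_X)$ with $u_X$ constantly $1$, and $\uSlg_{\mathsf s}$ for the full subcategory of $\uSlg$ on the unital Specker $\ell$-groups $(S,u)$ with $u$ singular. First I would note that $\Bms_1$ is isomorphic to the category $\mathsf{Stone}$ of boolean spaces and continuous maps: when $u_W\equiv 1\equiv u_V$ the multiplicity $\zeta_\gamma = u_W/u_V(\gamma)$ is identically $1$, so the multiplicity-decreasing constraint in the definition of a $\Bms$-morphism is vacuous and a $\Bms_1$-morphism is exactly a continuous map $W\to V$; on such a $\gamma$ the formula \eqref{equation:esse-gamma} collapses to the pullback $\SS_\gamma\colon f\mapsto f(\gamma)$.

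The crux is the equivalence, for a boolean space $X$, between the unit of $(C_X,u)$ being singular and $u$ being constantly $1$. Indeed, by \cref{theorem:accozzaglia}(iv) the singular elements of $C_X$ are precisely the characteristic functions of clopen subsets of $X$, hence take values in $\{0,1\}$; since $u$ is $\mathbb Z_{>0}$-valued, it is singular exactly when it is the characteristic function of the clopen set $X$, that is, when $u\equiv 1$. In particular $\SS(X,1)=(C_X,1)$ has singular unit, so $\SS$ maps $\Bms_1$ into $(\uSlg_{\mathsf s})^{\mathrm{op}}$. For essential surjectivity onto $(\uSlg_{\mathsf s})^{\mathrm{op}}$, let $(S,u)$ have singular unit; transporting along the isomorphism of \cref{theorem:accozzaglia}(iii) the singular elements correspond, so $u^\natural$ is a clopen characteristic function, and being $\mathbb Z_{>0}$-valued it satisfies $u^\natural\equiv 1$. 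Hence $(S,u)\cong\SS(\maxspec(S),u^\natural)=\SS(\maxspec(S),1)$ with $(\maxspec(S),1)\in\Bms_1$. Since $\SS$ is full and faithful (\cref{proposition:full,proposition:faithful}, which hold for all morphisms and so restrict verbatim to the full subcategory $\Bms_1$), it follows that $\SS$ restricts to an equivalence $\Bms_1\simeq(\uSlg_{\mathsf s})^{\mathrm{op}}$.

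Finally I would identify $\uSlg_{\mathsf s}$ with the category $\mathsf{BA}$ of boolean algebras through the $\Gamma$ functor. For $(S,u)\in\uSlg_{\mathsf s}$ the unit $u=s_S$ is singular, so every $g\in\Gamma(S,u)=[0,u]$ satisfies $g\wedge(u-g)=0$; these are exactly the complemented elements of the MV-algebra $[0,u]$, and conversely every boolean algebra arises this way. Using the preservation properties of $\Gamma$ together with the main result of \cite{mun-tac} (exactly as in the proof of \cref{theorem:accozzaglia}(v)), $\Gamma$ restricts to an equivalence $\uSlg_{\mathsf s}\simeq\mathsf{BA}$ sending $(C_X,1)$ to the boolean algebra of clopen subsets of $X$. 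Composing the equivalences
\[
\mathsf{Stone}\cong\Bms_1\xrightarrow{\ \SS\ }(\uSlg_{\mathsf s})^{\mathrm{op}}\xrightarrow{\ \Gamma^{\mathrm{op}}\ }\mathsf{BA}^{\mathrm{op}}
\]
yields the functor sending $X$ to its algebra of clopen subsets and a continuous $\gamma$ to the preimage map $\gamma^{-1}$ (since $\SS_\gamma$ sends $\chi_A$ to $\chi_{\gamma^{-1}(A)}$), which is precisely Stone duality. The one genuinely load-bearing step is the singular-unit versus constant-multiplicity-$1$ equivalence of the second paragraph; once it is in place, the corollary follows formally by restricting \cref{theorem:duality} and composing with the classical $\Gamma$-equivalence.
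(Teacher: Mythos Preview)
Your proof is correct and follows essentially the same approach as the paper's: both invoke the $\Gamma$-equivalence of \cite{mun-tac} between $\uSlg_{\mathsf s}$ and $\mathsf{BA}$ and the trivial identification of $\Bms_1$ with boolean spaces. Your version is more explicit in one useful respect: you spell out, via \cref{theorem:accozzaglia}(iv), why a unit $u\in C_X$ is singular exactly when $u\equiv 1$, and hence why $\SS$ carries $\Bms_1$ onto $(\uSlg_{\mathsf s})^{\mathrm{op}}$---a step the paper's terse proof leaves to the reader.
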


 \begin{proof}
 In {\rm  \cite{mun-tac}} 
 it was proved that, by restriction,  the functor
   $\Gamma$ of \cite[\S 3]{mun-jfa}  yields a categorical equivalence
  between unital Specker $\ell$-groups whose distinguished unit is
  singular,  and boolean algebras.
 By Stone duality,
the category $\mathsf{BA}$  of  boolean algebras
and their homomorphisms  is dual to the full subcategory 
$\mathsf{Bms1}$ of
 boolean multispacs of the form  $(X, \boldsymbol 1)$
  with $\boldsymbol 1$ the constant function $1$ over 
the boolean space $X$. Trivially, 
  $\mathsf{Bms1}$ is categorically equivalent to
$\mathsf{BA}$.
\end{proof}

\begin{definition}
\label{definition:specker-mv-algebra}
A  {\em Specker MV-algebra} is  an MV-algebra
isomorphic to 
 $\Gamma(S,u)$ for some   unital Specker $\ell$-group
 $(S,u)$. We denote by
 $$
 \mathsf{SMV}
 $$
 the category of Specker MV-algebras with
 MV-algebraic homomorphisms.
\end{definition}

\begin{lemma}[Compare with {\cite[Theorem 4.5]{mun-sl}}] 
\label{lemma:mv-representation}
Up to isomorphism, any Specker MV-algebra  $A$  is a finite product
of MV-algebras of the form $C(X_i, \mbox{\L}_{n(i)})$, 
where each \,$X_i$\,
is a (nonempty) boolean space and\, \L$_{n(i)}$\, is the \luk\ chain with \,$n(i)+1$ elements.
Equivalently,  $A$ is isomorphic to a finite product of tensor products
$B_i\otimes $\L$_{n(i)}$, for  boolean algebras  $B_i$.
\end{lemma}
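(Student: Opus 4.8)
The plan is to reduce to the concrete model $(S,u) \cong (C_X, u)$ supplied by Theorem \ref{theorem:accozzaglia}(iii), and then to decompose $X$ according to the finitely many values of the multiplicity $u$.

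First I would unwind the definition: $A \cong \Gamma(S,u)$ for some unital Specker $\ell$-group $(S,u)$, and by Theorem \ref{theorem:accozzaglia}(iii) I may take $S = C_X$ with $X = \maxspec(S)$ a boolean space and $u = u^\natural \colon X \to \mathbb{Z}_{>0}$ continuous, so that $A \cong \Gamma(C_X, u) = \{f \in C_X \mid 0 \le f \le u\}$. Next, since $\mathbb{Z}_{>0}$ is discrete and $X$ is compact, $u$ has finite range, say $\{n(1), \dots, n(k)\}$, and the fibers $X_i := u^{-1}(n(i))$ form a finite partition of $X$ into nonempty clopen boolean subspaces on each of which $u$ is constantly $n(i)$.

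I would then check that restriction $f \mapsto (f \restrict X_i)_{i=1}^{k}$ is an MV-isomorphism of $\Gamma(C_X, u)$ onto $\prod_{i=1}^{k} \Gamma(C_{X_i}, n(i))$: gluing produces a continuous function because the $X_i$ are clopen and finitely many, and the MV-operations are preserved since they are computed pointwise on the disjoint union $X = \bigsqcup_i X_i$. As each $\Gamma(C_{X_i}, n(i))$ is exactly the algebra of continuous functions $X_i \to \{0, 1, \dots, n(i)\}$, and the $(n(i)+1)$-element chain $\{0,\dots,n(i)\}$ is isomorphic to the \luk\ chain $\mbox{\L}_{n(i)}$, this identifies $\Gamma(C_{X_i}, n(i))$ with $C(X_i, \mbox{\L}_{n(i)})$ and yields the first representation $A \cong \prod_{i=1}^{k} C(X_i, \mbox{\L}_{n(i)})$.

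For the equivalent tensor-product form I would set $B_i := \mathrm{Clop}(X_i)$, the boolean algebra of clopen subsets of $X_i$, so that $X_i$ is the Stone space of $B_i$, and invoke the MV-algebra isomorphism $C(X_i, \mbox{\L}_{n(i)}) \cong B_i \otimes \mbox{\L}_{n(i)}$; substituting factorwise gives $A \cong \prod_{i=1}^{k} B_i \otimes \mbox{\L}_{n(i)}$. I expect this last identification to be the main obstacle, since---unlike the purely topological bookkeeping behind the first representation---it rests on the definition and basic computation of the tensor product of MV-algebras; rather than redevelop that machinery I would cite the relevant computation of $B \otimes \mbox{\L}_n$ for boolean $B$ (cf.\ \cite[Theorem 4.5]{mun-sl}).
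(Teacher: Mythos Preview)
Your proposal is correct and follows essentially the same route as the paper: invoke Theorem~\ref{theorem:accozzaglia}(iii) to realize $(S,u)$ as $(C_X,u)$, split $X$ into the finitely many fibers of the continuous multiplicity $u$, and identify each factor with $C(X_i,\mbox{\L}_{n(i)})$, then cite the tensor-product computation for the second form. The only cosmetic difference is that the paper first decomposes at the level of the unital $\ell$-group and then applies $\Gamma$ (noting that $\Gamma$ preserves finite products), whereas you apply $\Gamma$ first and decompose the resulting MV-algebra directly; the content is the same.
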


\begin{proof}
By Theorem \ref{theorem:accozzaglia}(iii) we
  may identify  $(S,u)$ with $(C_X, v)$,  for some boolean space
$X$, where $v\colon X\to \Z_{>0}$.  The continuous map
$v$  splits $(C_X, v)$ into a finite product 
$$
(C_X, v)\cong (C_{X_1},v_1)\times\dots \times (C_{X_l},v_l)
$$
of unital
Specker $\ell$-groups ($C_{X_i},v_i)$, where $v_i$ takes
a constant integer value  (also denoted $v_i>0$) over each boolean space
$X_i$.  For short,  $X_1,\dots, X_l$
are the {\em fibers} of  $v$.
Direct inspection shows that $\Gamma$ transforms
each ($C_{X_i},v_i)$ into the MV-algebra of all continuous
functions on $X_i$  taking values in  \L$_{v_i}$.
Now note that   $\Gamma$ preserves finite products. 
For the second statement see \cite[Corollary 4.6]{mun-sl}.
 \end{proof}
 
In \cite[Cor.~4.5]{fusgehgoomar}, the authors obtained an ``extended Priestley'' duality for MV-algebras; in this duality, the dual of an MV-algebra is the Priestley space dual to the underlying bounded distributive lattice of the MV-algebra, equipped with additional structure.
The structures dual to MV-algebras according to this duality are called \emph{MV-spaces} \cite[Def.~4.4]{fusgehgoomar}.
 
 \begin{proposition}
  \label{proposition:priestley}
	The category $\mathsf{Bms}$ of boolean multispaces is equivalent to the full subcategory of the category of MV-spaces consisting of those MV-spaces that correspond to Specker MV-algebras under the extended Priestley duality for MV-algebras in \cite[Cor.~4.5]{fusgehgoomar}.
 \end{proposition}

\begin{proof}
	This follows from the duality between MV-algebras and MV-spaces in \cite[Cor.~4.5]{fusgehgoomar} and the duality between boolean multispaces and unital Specker $\ell$-groups in \cref{theorem:duality}.
\end{proof}
 
\section{The functor \texorpdfstring{$\SS$}{S} as a part of a duality}
\label{section:adjoint}

\begin{definition}[{\cite[Definitions 1.5.4, 1.4.1]{rie}}]
\label{definition:equivalence}
An {\em equivalence
of categories}   consists of functors 
$F \colon \mathsf{C}  \leftrightarrows  \mathsf{D} \cocolon G $ together
with {\it natural isomorphisms} $$\eta \colon  1_ \mathsf{C} \cong
 GF, \,\,\,
\epsilon \colon  FG 
\cong 1_{ \mathsf{D}}.$$
Two  functors
$F \colon  \mathsf{C} \leftrightarrows  \mathsf{D} \cocolon G $ 
 are said to be
 {\it part of an equivalence}  of 
 $ \mathsf{C}$ and $ \mathsf{D}$ 
   if there are natural isomorphisms
$\eta$ and $\epsilon$  such that the quartet $(F, G, \eta, \epsilon)$ 
 constitutes  an equivalence  of 
 $ \mathsf{C}$ and $ \mathsf{D}$ in the above sense.
 \end{definition}

Thus in category theory the same term ``equivalence''
applies  both to a functor $F$ and to a quartet
$(F, G, \eta, \epsilon)$ with $F,G$ functors and
$\epsilon, \eta$ natural isomorphisms.

\medskip
We next introduce a functor which, in Theorem 
\ref{theorem:enter-B-tris}, will be shown to 
be a part  with $\mathcal S$ of an equivalence
between the categories $ \mathsf{Bms}$ and $ \uSlg^{\rm op}$.

  \begin{definition}
 \label{definition:B}\hfill
 \begin{enumerate}[label = (\roman*)]
 
 	\item
 The functor  
 $\mathcal B\colon  \uSlg^{\rm op} \to  \Bms$
 transforms every unital Specker $\ell$-group
$(S,u)$ into the boolean multispace
$$
\mathcal B(S,u)=(\maxspec(S), u^\natural).
$$
As per \cref{n:natural}, the function
$u^\natural\colon  \maxspec(S) \to \Z$
maps any $\mathfrak m\in \maxspec(S)$ into the value 
at $u$ of the unique surjective $\ell$-homomorphism from 
$S$ to $\Z$ with kernel $\m$; equivalently, into the unique
integer  $j = j_{u,\mathfrak m}$ such that 
$u -j s_S \in  \mathfrak m$.

 \smallskip
 	\item
	 On any unital
 $\ell$-homomorphism 
 $\psi \colon (S, u) \to (T, v)$
 of unital Specker $\ell$-groups, the functor 
   $\mathcal B$ \, sends $\psi$  
to the morphism $\B_\psi \colon \maxspec(T)
\to \maxspec(S)$ that maps any 
 $\mathfrak{m}\in \maxspec(T)$ to the maximal
 ideal of $S$ given by  $\psi^{-1}(\mathfrak{m})
 =\{f\in S\mid \psi(f) \in \mathfrak m\}$, as per 
 Lemma \ref{lemma:B-on-morphisms}.
 
 \end{enumerate}
 
 \end{definition}

\smallskip
 
A standard fact in category theory is that, for every full,
 faithful and essentially surjective functor $F \colon \mathsf{C}
  \to \mathsf{D}$ there is an equivalence $(F, G, \eta, \epsilon)$
   with $F$ as the first component.
For example, this is proved in  the $(\Leftarrow)$-direction of 
\cite[Theorem~1.5.9]{rie}.

The proof runs as follows: 

\noindent
From the essential surjectivity of $F$ and the axiom of choice for classes 
one gets  a family $(G_Y)_{Y \in \mathsf{D}}$ of objects of 
$\mathsf{C}$
 and a family $(\epsilon_Y \colon F(G_Y) \to Y)_{Y \in \mathsf{D}}$
 of isomorphisms in $D$.
Using  $F$  together with these two families, and the fact 
that $F$ is full and faithful,  one  constructs  an equivalence $(F, G, \eta, \epsilon)$.
In case we are  already given a family $(G_Y)_{Y \in \mathsf{D}}$ 
of objects of $\mathsf{C}$ and a family $(\epsilon_Y \colon F(G_Y) \to Y)_{Y \in \mathsf{D}}$ 
of isomorphisms in $D$, one can dispense with both the essential surjectivity
assumption for $F$ and the axiom of choice (still keeping the
 faithfulness and fullness assumptions), 
and jump directly to the point of the proof where one constructs the
desired  equivalence with these ingredients.

\smallskip
In more detail we have:

\begin{lemma}
\label{lemma:first-diagram} 
Let $F \colon \mathsf{C} \to \mathsf{D}$ 
be a full and faithful functor.
For any $Y \in \mathsf{D}$  let $G_Y$ be an object of
$\mathsf{C}$ and $\epsilon_Y \colon F(G_Y) \to Y$ an isomorphism in $\mathsf{D}$.
Then $(F, G, \eta, \epsilon)$ is a well-defined {\em adjoint equivalence}
of categories\footnote{An \emph{adjoint equivalence of categories}
is an equivalence of categories $(F, G, \eta, \epsilon)$ that is also
an adjunction; in other words, an adjunction where both the unit 
and the counit are natural isomorphisms.  If two functors $F$ and $G$ 
are part of an equivalence $(F, G, \eta, \epsilon)$, they are also part of an adjoint equivalence.
It suffices to appropriately modify either one of  $\eta$ or  $\epsilon$.
 See, e.g.,  \cite[\S IV.4]{mac} or \cite[Proposition 4.4.5]{rie}.}
   \begin{itemize}

	\smallskip
  	\item
  	$G \colon \mathsf{D} \to \mathsf{C}$ is the  functor
	given by the following stipulations:
  	
  	---to any object 
  		$Y \in \mathsf{D}$ assign the object $G_Y$;
  		
  	---to any  morphism $g \colon Y \to Y'$ in $\mathsf{D}$ 
  		assign the (existing and unique, by
		the  fullness and the faithfulness of $F$) morphism $f \colon G_Y \to G_{Y'}$ in 
  		$\mathsf{C}$ making the following diagram commute:
  		\begin{equation} \label{eq:a-label-to-cite-this-equation-counit}
  			\begin{tikzcd}
  				F(G_Y) \arrow[swap]{d}{F(f)} \arrow{r}{\epsilon_Y}& Y  \arrow{d}{g} \\
  				F(G_{Y'}) \arrow[swap]{r}{\epsilon_{Y'}} & Y'\,;
  			\end{tikzcd}
  		\end{equation}

	\smallskip  
	  \item 
  	$(\epsilon_Y \colon F(G_Y) \to Y)_{Y \in \mathsf{D}}$ is the given collection of isomorphisms, which is a natural transformation from $FG$ to $\mathrm{id}_{\mathsf{D}}$;
 
 	\smallskip 	
  	\item
  	$(\eta_X \colon X \to G_{F(X)})_{X \in \mathsf{C}}$
	is the natural transformation from $\mathrm{id}_{\mathsf{C}}$ to $GF$ whose component $\eta_X$ at $X$ is the
	 (existing and unique, by the fullness and the faithfulness of $F$) 
	 morphism such that $F(\eta_X) \colon F(X) \to F(G_{F(X)})$ 
	 is the inverse of $\epsilon_{F(X)} \colon F(G_{F(X)}) \to F(X)$.
  	
  \end{itemize}
\end{lemma}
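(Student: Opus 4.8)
The plan is to verify the three conditions that make $(F, G, \eta, \epsilon)$ an adjoint equivalence, treating each bullet in the statement as an assertion to be checked. The ingredients we are given are a full and faithful functor $F$, a chosen object $G_Y$ for each $Y \in \mathsf{D}$, and a chosen isomorphism $\epsilon_Y \colon F(G_Y) \to Y$. I would organize the argument into four tasks: (a) $G$ is a well-defined functor; (b) $\epsilon$ is a natural isomorphism from $FG$ to $\mathrm{id}_{\mathsf{D}}$; (c) $\eta$ is a well-defined natural isomorphism from $\mathrm{id}_{\mathsf{C}}$ to $GF$; and (d) the triangle identities hold, upgrading the equivalence to an adjoint equivalence.

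First I would establish that $G$ is a functor. Given $g \colon Y \to Y'$ in $\mathsf{D}$, the composite $\epsilon_{Y'}^{-1} \circ g \circ \epsilon_Y \colon F(G_Y) \to F(G_{Y'})$ is a morphism in $\mathsf{D}$ between objects in the image of $F$; by fullness there is a morphism $f \colon G_Y \to G_{Y'}$ with $F(f)$ equal to it, and by faithfulness $f$ is unique. This is exactly the $f$ making diagram \eqref{eq:a-label-to-cite-this-equation-counit} commute. Functoriality — preservation of identities and composites — then follows by the uniqueness clause of faithfulness: for instance, to check $G(g' \circ g) = G(g') \circ G(g)$ one applies $F$ to both sides, uses that $F$ preserves composition, and pastes the two defining squares for $G(g)$ and $G(g')$ to see that $F(G(g') \circ G(g))$ satisfies the defining equation of $G(g' \circ g)$; faithfulness forces equality. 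The naturality of $\epsilon$ is then a restatement of the commutativity of \eqref{eq:a-label-to-cite-this-equation-counit}, and each $\epsilon_Y$ is an isomorphism by hypothesis.

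Next I would construct $\eta$. For $X \in \mathsf{C}$, the morphism $\epsilon_{F(X)} \colon F(G_{F(X)}) \to F(X)$ is an isomorphism, so its inverse $\epsilon_{F(X)}^{-1} \colon F(X) \to F(G_{F(X)})$ is a morphism between objects in the image of $F$; by fullness and faithfulness there is a unique $\eta_X \colon X \to G_{F(X)}$ with $F(\eta_X) = \epsilon_{F(X)}^{-1}$. Since $F$ reflects isomorphisms (a full and faithful functor does), $\eta_X$ is itself an isomorphism. Naturality of $\eta$ — that $G(F(h)) \circ \eta_X = \eta_{X'} \circ h$ for $h \colon X \to X'$ — is again proved by applying $F$ and invoking faithfulness: the image under $F$ of both sides can be compared using $F(\eta_X) = \epsilon_{F(X)}^{-1}$, the defining square for $G(F(h))$, and naturality of $\epsilon$ at $F(h)$.

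The main obstacle, and the step requiring the most care, is the triangle identities, since these are what distinguish a bare equivalence from an adjoint one. I would verify $\epsilon_{F(X)} \circ F(\eta_X) = \mathrm{id}_{F(X)}$, which is immediate from the definition $F(\eta_X) = \epsilon_{F(X)}^{-1}$; this is precisely why $\eta$ was defined via $\epsilon$ rather than chosen independently, and it is the reason the construction lands directly on an \emph{adjoint} equivalence without the correction step alluded to in the footnote. The second triangle identity $G(\epsilon_Y) \circ \eta_{G_Y} = \mathrm{id}_{G_Y}$ is then forced: applying the faithful $F$ and using $F(\eta_{G_Y}) = \epsilon_{F(G_Y)}^{-1}$ together with the defining square for $G(\epsilon_Y)$ reduces it to the naturality of $\epsilon$ at $\epsilon_Y$, after which faithfulness delivers the identity on the nose. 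Having both triangle identities together with the fact that $\eta$ and $\epsilon$ are natural isomorphisms completes the proof that $(F, G, \eta, \epsilon)$ is an adjoint equivalence.
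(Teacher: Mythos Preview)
Your proposal is correct and follows essentially the same approach as the paper: the paper delegates the verification that $G$ is a functor and that $\eta,\epsilon$ are natural isomorphisms to \cite[Theorem~1.5.9]{rie}, while you spell these out, but the substantive step---checking the two triangle identities, with the first immediate from the definition of $\eta$ and the second obtained by applying the faithful $F$ and reducing to the naturality square of $\epsilon$ at $\epsilon_Y$---is identical in both.
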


\begin{proof}
To see that $(F, G, \eta, \epsilon)$ is an equivalence, one can just follow the  
$(\Leftarrow)$-direction of  the proof of 
\cite[Theorem~1.5.9]{rie}, 
starting from the line  that reads:

\begin{quote}
 ``For each $\ell \colon d \to d'$, Lemma 1.5.10 defines a unique morphism
making the square [...]''.
\end{quote}
As observed in the proof of \cite[Theorem IV.4 (iii)$\Rightarrow$(ii)]{mac},
this equivalence is indeed an adjoint equivalence, because it satisfies the two triangle identities:
\smallskip

---The triangle identity $\epsilon_{F(X)} \circ F (\eta_X)\,\,\,\, (=
\epsilon_{F(X)}(F (\eta_X) )) \,\,=\,\, 1_{F(X)}$,
which  holds by the definition of $\eta$;

	\smallskip
---The triangle identity $G(\epsilon_Y) \circ \eta_{G(Y)} = 1_{G(Y)}$.

\noindent
To prove this latter identity,
  since $F$ is a full and faithful functor it is enough to prove  
\begin{equation}
\label{equation:identity} 
FG(\epsilon_Y) \circ F(\eta_{G(Y)}) = 1_{FG(Y)}.
\end{equation}
Applying to $\epsilon_{Y}$ the definition of $G$ on morphisms
we obtain the  following commutative diagram:
\begin{equation*}
	\begin{tikzcd}
		FGFG(Y) \arrow[swap]{d}{FG(\epsilon_Y)} 
		\arrow{r}{\epsilon_{FG(Y)}}& FG(Y)  \arrow{d}{\epsilon_Y} \\
		FG(Y) \arrow[swap]{r}{\epsilon_{Y}} & Y\,.
	\end{tikzcd}
\end{equation*}
Since $\epsilon_Y$ is an isomorphism,  $FG(\epsilon_Y) = \epsilon_{FG(Y)}$.
The  identity  $\epsilon_{FG(Y)} \circ F(\eta_{G(Y)}) = 1_{FG(Y)}$
 follows from the definition of $\eta_{G(Y)}$.  
 
 Having thus obtained the
 identity  \eqref{equation:identity}, the proof
 is  complete.
	\end{proof}

\noindent
As we aim to establish a {\it duality}, 
we will need the following  variant 
of Lemma \ref{lemma:first-diagram}:

\begin{lemma}
	\label{lemma:contravariant}
	Let $F \colon \mathsf{C} \to \mathsf{D}^\mathrm{op}$ 
	be a full and faithful functor.
	For each $Y \in \mathsf{D}$ let $G_Y$ be an object of 
	$\mathsf{C}$ and $\epsilon_Y \colon Y \to F(G_Y)$ 
	an isomorphism in $\mathsf{D}$.
	Then  $(F, G, \eta, \epsilon)$ is an adjoint dual equivalence, 
	where: 
	\begin{itemize}

		\smallskip
		\item
		$G \colon \mathsf{D}^\mathrm{op} \to \mathsf{C}$ is the  functor
		given by the following stipulations:
		
		\smallskip
		---to any object 
		$Y \in \mathsf{D}$ assign the object $G_Y$;
		
		---to any  morphism $g \colon Y \to Y'$ in $\mathsf{D}$ 
		assign the uniquely determined morphism $f \colon G_{Y'} \to G_{Y}$
		in $\mathsf{C}$ making the following diagram commute:
		\begin{equation}
			\label{eq:a-label-to-cite-this-equation-counit-dual}
			\begin{tikzcd}
				Y  \arrow[swap]{d}{g} \arrow{r}{\epsilon_Y} & F(G_Y) \arrow{d}{F(f)} \\
				Y' \arrow[swap]{r}{\epsilon_{Y'}} & F(G_{Y'}). 
			\end{tikzcd}
		\end{equation}

		\smallskip  
		\item 
		$(\epsilon_Y \colon Y \to F(G_Y))_{Y \in \mathsf{D}}$ is the given collection of isomorphisms, which is a natural transformation from $\mathrm{id}_{\mathsf{D}}$ to $FG$;
		
		\smallskip 	
		\item
		$(\eta_X \colon X \to G_{F(X)})_{X \in \mathsf{C}}$
		  is the natural transformation from $\mathrm{id}_{\mathsf{C}}$ to $GF$ whose component $\eta_X$ at $X$ is the unique
		morphism such that $F(\eta_X) \colon F(G_{F(X)}) \to F(X)$ 
		is the inverse of $\epsilon_{F(X)} \colon F(X) \to F(G_{F(X)})$. \qed
		
	\end{itemize}
\end{lemma}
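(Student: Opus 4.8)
The plan is to deduce Lemma \ref{lemma:contravariant} from Lemma \ref{lemma:first-diagram} by the formal device of replacing $\mathsf{D}$ with its opposite, so that the only real work is checking that every piece of data and every constructed arrow translates correctly under the passage $\mathsf{D} \leftrightarrow \mathsf{D}^{\mathrm{op}}$.

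First I would observe that $F \colon \mathsf{C} \to \mathsf{D}^{\mathrm{op}}$ is a full and faithful functor whose codomain is an honest category, namely $\mathsf{E} := \mathsf{D}^{\mathrm{op}}$. Applying Lemma \ref{lemma:first-diagram} verbatim with $\mathsf{E}$ in the role of ``$\mathsf{D}$'' requires, for each object $Y$ of $\mathsf{E}$ (equivalently, each $Y \in \mathsf{D}$), an object $G_Y \in \mathsf{C}$ together with an isomorphism $F(G_Y) \to Y$ in $\mathsf{E}$. But a morphism $F(G_Y) \to Y$ in $\mathsf{E} = \mathsf{D}^{\mathrm{op}}$ is, by definition, a morphism $Y \to F(G_Y)$ in $\mathsf{D}$, and it is invertible in $\mathsf{E}$ precisely when it is invertible in $\mathsf{D}$. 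Hence the datum required by Lemma \ref{lemma:first-diagram} is exactly the given isomorphism $\epsilon_Y \colon Y \to F(G_Y)$, now read as an arrow of $\mathsf{E}$. Lemma \ref{lemma:first-diagram} then produces an adjoint equivalence $(F, G, \eta, \epsilon)$ with $G \colon \mathsf{E} \to \mathsf{C}$, i.e.\ $G \colon \mathsf{D}^{\mathrm{op}} \to \mathsf{C}$. Since an (adjoint) equivalence between $\mathsf{C}$ and $\mathsf{D}^{\mathrm{op}}$ is by definition an (adjoint) dual equivalence between $\mathsf{C}$ and $\mathsf{D}$, this already settles the assertion at the level of existence.

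The remaining task is to check that the functor $G$ and the transformations $\eta, \epsilon$ so produced coincide with the explicit descriptions in the statement. For $\epsilon$ this is immediate, as it is the given family. For $G$ on morphisms, I would take $g \colon Y \to Y'$ in $\mathsf{D}$, view it as an arrow $Y' \to Y$ in $\mathsf{E}$, and feed it into the recipe of Lemma \ref{lemma:first-diagram}: the assigned arrow is the unique $f \colon G_{Y'} \to G_Y$ in $\mathsf{C}$ making the square \eqref{eq:a-label-to-cite-this-equation-counit} commute in $\mathsf{E}$. Reversing all four arrows of that square turns it into the diagram \eqref{eq:a-label-to-cite-this-equation-counit-dual} drawn in $\mathsf{D}$, with $F(f) \colon F(G_Y) \to F(G_{Y'})$; thus the two prescriptions for $G(g) = f$ agree. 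The same reversal identifies the defining condition on $\eta_X$ --- that $F(\eta_X)$ be inverse to $\epsilon_{F(X)}$ --- in the two formulations.

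The argument is essentially bookkeeping, and I expect the only point demanding care to be the consistent tracking of variance: making sure, at each step, that ``isomorphism/morphism in $\mathsf{D}^{\mathrm{op}}$'' is correctly re-read as the arrow-reversed ``isomorphism/morphism in $\mathsf{D}$'', and in particular that the commuting square of Lemma \ref{lemma:first-diagram}, which lives in $\mathsf{D}^{\mathrm{op}}$, becomes precisely \eqref{eq:a-label-to-cite-this-equation-counit-dual} after reversal rather than its transpose. Once this dictionary is fixed, no further computation is needed, and the adjoint (hence triangle-identity) part is inherited directly from Lemma \ref{lemma:first-diagram}.
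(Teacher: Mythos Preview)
Your proposal is correct and is exactly the approach the paper takes: the paper presents Lemma~\ref{lemma:contravariant} as a ``variant'' of Lemma~\ref{lemma:first-diagram} and omits the proof entirely (ending the statement with \qed), leaving the $\mathsf{D} \leftrightarrow \mathsf{D}^{\mathrm{op}}$ translation implicit. Your careful bookkeeping of how the data and the commuting square transform under reversal is precisely what the paper is asking the reader to supply.
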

 
 \medskip
 
\begin{lemma} 
\label{lemma:description-of-the-unique}
	\,\,For any morphism \,\,\,$\psi \colon (S, u) \to (T, v)$\,\,\, in\, $\uSlg$, the
	(necessarily unique)   morphism 
	$\lambda \colon (\maxspec(T), v^\natural) \to (\maxspec(S), u^\natural)$
	of Lemma \ref{lemma:contravariant}  making the following diagram commute
	\begin{equation} \label{eq:naturality-of-natural-simple}
		\begin{tikzcd}
			(S,u) \arrow{r}{^\natural} \arrow[swap]{d}{\psi} 
			& (C_{\maxspec(S)}, u^\natural) \arrow{d}{\SS_\lambda}\\
			(T, v) \arrow[swap]{r}{^\natural} 
			& (C_{\maxspec(T)}, v^\natural)
		\end{tikzcd}
	\end{equation}
	maps any $\mathfrak{m}\in \maxspec(T)$ \,to the set\, $\psi^{-1}(\mathfrak{m}) 
	= \{f\in S\mid \psi(f) \in \mathfrak m\}$.
\end{lemma}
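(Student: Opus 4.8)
The plan is to reduce the lemma to a single commutativity check for an explicit candidate morphism. Apply Lemma \ref{lemma:contravariant} with $F = \SS$, so that $\mathsf{C} = \Bms$ and $\mathsf{D} = \uSlg$, the role of $G_{(S,u)}$ being played by $(\maxspec(S), u^\natural)$ and the role of the isomorphism $\epsilon_{(S,u)}$ by the map $^\natural \colon (S,u) \to (C_{\maxspec(S)}, u^\natural)$ of Theorem \ref{theorem:accozzaglia}(iii). That lemma tells us $\lambda$ is the \emph{unique} $\Bms$-morphism fitting into \eqref{eq:naturality-of-natural-simple}, uniqueness resting on the faithfulness of $\SS$ (Proposition \ref{proposition:faithful}). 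Hence it suffices to exhibit one $\Bms$-morphism making the square commute: uniqueness will then identify it with $\lambda$. By Lemma \ref{lemma:B-on-morphisms}, the assignment $\m \mapsto \psi^{-1}(\m)$ is already known to be a $\Bms$-morphism $(\maxspec(T), v^\natural) \to (\maxspec(S), u^\natural)$, so it is the natural candidate; everything reduces to showing that taking $\lambda = \psi^{-1}$ makes \eqref{eq:naturality-of-natural-simple} commute.

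Next I would chase an arbitrary $g \in S$ around the square. Going right then down sends $g$ to $\SS_{\psi^{-1}}(g^\natural)$, while going down then right sends it to $(\psi(g))^\natural$; thus the whole matter reduces to the pointwise identity $\SS_{\psi^{-1}}(g^\natural) = (\psi(g))^\natural$ in $C_{\maxspec(T)}$. Using the definition \eqref{equation:esse-gamma} of $\SS$ on morphisms with $\gamma = \psi^{-1}$ and evaluating at a point $\m \in \maxspec(T)$ gives
\[
\SS_{\psi^{-1}}(g^\natural)(\m) = g^\natural(\psi^{-1}(\m)) \cdot \frac{v^\natural(\m)}{u^\natural(\psi^{-1}(\m))},
\]
which is well defined since $u^\natural$ is $\Z_{>0}$-valued.

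The crux is the multiplicative relation supplied by Lemma \ref{lemma:B-on-morphisms}: for the fixed $\m$ there is $k \in \Z_{>0}$ with $(\psi(g))^\natural(\m) = k \cdot g^\natural(\psi^{-1}(\m))$ for \emph{all} $g \in S$, and in particular $v^\natural(\m) = k \cdot u^\natural(\psi^{-1}(\m))$. The latter identity says exactly that the ratio $v^\natural(\m)/u^\natural(\psi^{-1}(\m))$ equals $k$, so substituting back yields
\[
\SS_{\psi^{-1}}(g^\natural)(\m) = k \cdot g^\natural(\psi^{-1}(\m)) = (\psi(g))^\natural(\m),
\]
as required. I expect the only real point to watch—rather than a genuine obstacle—to be this cancellation: the multiplicity factor $v^\natural/u^\natural(\psi^{-1})$ built into $\SS_{\psi^{-1}}$ is precisely the integer $k$ that Lemma \ref{lemma:B-on-morphisms} extracts from the commutative triangle relating $\rho_{\psi^{-1}(\m)}$, $\rho_\m$ and $\psi$. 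The two occurrences of $k$ cancel, the square commutes on the generators $g^\natural$ (hence everywhere, since $^\natural$ is an isomorphism), and uniqueness then gives $\lambda = \psi^{-1}$.
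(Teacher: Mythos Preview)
Your proof is correct and follows the same overall strategy as the paper---verify the diagram \eqref{eq:naturality-of-natural-simple} via a pointwise computation with the explicit formula for $\SS$ on morphisms---but the two executions run in opposite directions. The paper lets $\lambda$ be the abstractly given unique morphism and then shows $\lambda(\m) = \psi^{-1}(\m)$ as sets via the chain $g \in \psi^{-1}(\m) \iff (\psi(g))^\natural(\m) = 0 \iff g^\natural(\lambda(\m)) = 0 \iff g \in \lambda(\m)$, invoking Lemma~\ref{lemma:natural-is-0} and using only that the multiplicity fraction is nonzero. You instead take $\psi^{-1}$ as a concrete candidate (already a $\Bms$-morphism by Lemma~\ref{lemma:B-on-morphisms}), verify the square commutes for it using the full multiplicative identity $(\psi(g))^\natural(\m) = k \cdot g^\natural(\psi^{-1}(\m))$ from that same lemma to pin down the fraction as exactly $k$, and then conclude by uniqueness. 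Your route extracts more from Lemma~\ref{lemma:B-on-morphisms} and avoids Lemma~\ref{lemma:natural-is-0}; the paper's route needs only the weaker nonvanishing of the fraction.
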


\begin{proof}
	By Theorem  \ref{theorem:accozzaglia}(iii), for every $(Q,w)$ in $\uSlg$ the map 
	$$\natural \colon(Q,w) \to (C_{\maxspec(Q)}, w^\natural)$$
	 is an isomorphism in $\uSlg$.
	Therefore, the commutativity of \eqref{eq:naturality-of-natural-simple} 
	is equivalent to writing $\SS_\lambda = \natural \circ \psi \circ \natural^{-1}$.
	Since, by  Propositions \ref{proposition:faithful} and \ref{proposition:full},
	 $\SS$ is full and faithful,
	there is a unique morphism $\lambda \colon (\maxspec(T), v^\natural) \to (\maxspec(S), 
	u^\natural)$ making \eqref{eq:naturality-of-natural-simple} commute.
	Let $\lambda$ be such a morphism.
	Let $g \in S$,  with the intent of
	proving  $g \in  \lambda(\mathfrak{m}) \iff g \in \psi^{-1}(\mathfrak{m})$.
	We then  have
	\begin{align*}
		(\psi(g)^\natural)(\mathfrak{m}) & = (\SS_\lambda(g^\natural))(\mathfrak{m}) && \text{by the commutativity of diagram \eqref{eq:naturality-of-natural-simple}}\\
		& = g^\natural(\lambda(\mathfrak m))
		 \frac{v^\natural(\mathfrak m)}{u^\natural(\lambda(\mathfrak m))} && \text{by Definition \ref{definition:bms-to-uslg}.}
	\end{align*}

	 \medskip
	
	\noindent
This latter equality will find use in the third line in the following chain of equivalences:
\begin{align*}
	g \in \psi^{-1}(\m)   & \iff \psi(g) \in \m\\
	& \iff (\psi(g)^\natural)(\mathfrak{m}) = 0\, \, \, \text{by \cref{lemma:natural-is-0}}\\
	& \iff g^\natural(\lambda(\mathfrak m)) = 0 \,\,\, \text{since $\frac{v^\natural(\mathfrak m)}{u^\natural(\lambda(\mathfrak m))}\neq 0$}\\
	& \iff g \in \lambda(\m) \, \, \, \text{by \cref{lemma:natural-is-0}}.
\end{align*}
This completes the proof.
\end{proof}

\bigskip
\noindent
Applying this result to the
 full and faithful
  functor $\mathcal S\colon \mathsf{Bms} 
\to  \uSlg^{\rm op}$ along with 
 the collection of $\uSlg$-isomorphisms $(^\natural \colon
  (S,u) \to \SS(\maxspec(S), u^\natural))_{(S,u) \in \uSlg}$
(Theorem \ref{theorem:accozzaglia}(iii)),
we obtain:

\smallskip

\begin{theorem}
\label{theorem:enter-B-tris}
The functors $\SS$ and $\B$ are part of an adjoint dual equivalence between the category $\Bms$ of boolean multispaces and the category $\uSlg$ of unital Specker $\ell$-groups, with the following unit and counit:

	\begin{enumerate}[label = (\roman*)]
	
		\item
		$(\mathfrak{M}_X \colon (X, u) \to (\maxspec(C_{X}),u^\natural))_{(X,u) \in \Bms}$\,;

	\smallskip	
	
	\item
	$(^\natural \colon (S, u) \to (C_{\maxspec(S)}, u^\natural))_{(S, u) \in \uSlg}$.
	
	\end{enumerate}
	
\end{theorem}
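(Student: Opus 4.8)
The plan is to obtain this theorem as a direct application of the abstract transfer result in Lemma \ref{lemma:contravariant}, taking $F = \SS \colon \Bms \to \uSlg^{\rm op}$ (so that $\mathsf{C} = \Bms$ and $\mathsf{D} = \uSlg$) and then recognizing that the functor $G$ and the natural isomorphisms it produces are exactly $\B$, $\mathfrak M$, and $^\natural$. First I would check the hypotheses of Lemma \ref{lemma:contravariant}: the functor $\SS$ is full by Proposition \ref{proposition:full} and faithful by Proposition \ref{proposition:faithful}, and for each $(S,u) \in \uSlg$ the map ${}^\natural \colon (S,u) \to \SS(\maxspec(S), u^\natural) = (C_{\maxspec(S)}, u^\natural)$ is a $\uSlg$-isomorphism by Theorem \ref{theorem:accozzaglia}(iii). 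Setting $G_{(S,u)} := (\maxspec(S), u^\natural)$ and $\epsilon_{(S,u)} := {}^\natural$, Lemma \ref{lemma:contravariant} then delivers an adjoint dual equivalence $(\SS, G, \eta, \epsilon)$ whose counit $\epsilon$ is precisely the family in (ii). Note that only pointwise isomorphisms are needed as input, so naturality of $^\natural$ comes for free as part of the conclusion.

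Next I would identify the functor $G$ with $\B$. On objects this is immediate, since $G_{(S,u)} = (\maxspec(S), u^\natural) = \B(S,u)$ by Definition \ref{definition:B}(i). On morphisms, the value $G(\psi)$ at a $\uSlg$-morphism $\psi \colon (S,u) \to (T,v)$ is, by the definition of $G$ in Lemma \ref{lemma:contravariant}, the unique $\Bms$-morphism $\lambda \colon (\maxspec(T), v^\natural) \to (\maxspec(S), u^\natural)$ making diagram \eqref{eq:a-label-to-cite-this-equation-counit-dual} commute. But with the present choices that diagram is exactly \eqref{eq:naturality-of-natural-simple}, so Lemma \ref{lemma:description-of-the-unique} gives $\lambda(\mathfrak m) = \psi^{-1}(\mathfrak m)$, which is $\B_\psi$ by Definition \ref{definition:B}(ii). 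Hence $G = \B$, and the counit (ii) already witnesses that $\SS$ and $\B$ are part of an adjoint dual equivalence.

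It remains to identify the unit $\eta$ produced by Lemma \ref{lemma:contravariant} with the family $(\mathfrak M_X)$ of (i); I expect this to be the only step requiring a genuine, if short, computation. By construction $\eta_{(X,u)} \colon (X,u) \to (\maxspec(C_X), u^\natural)$ is characterized as the unique $\Bms$-morphism for which $\SS_{\eta_{(X,u)}}$ is the inverse of $\epsilon_{\SS(X,u)} = {}^\natural \colon (C_X,u) \to (C_{\maxspec(C_X)}, u^\natural)$, so I would verify that $\mathfrak M_X$ has this property, i.e.\ that $\SS_{\mathfrak M_X} \circ {}^\natural = \id_{C_X}$. Unwinding the definition of $\SS$ on morphisms (Definition \ref{definition:bms-to-uslg}) and using the identity $u(x) = u^\natural(\mathfrak M_X(x))$ from Lemma \ref{lemma:three-maps}(ii), the multiplicity factor $u / u^\natural(\mathfrak M_X)$ is the constant function $1$, so $\SS_{\mathfrak M_X}(F) = F \circ \mathfrak M_X$; applying this to $F = g^\natural$ and invoking $g^\natural(\mathfrak M_X(x)) = g(x)$ (again Lemma \ref{lemma:three-maps}(ii)) yields $\SS_{\mathfrak M_X}(g^\natural) = g$ for every $g \in C_X$. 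Since $^\natural$ is an isomorphism, this forces $\SS_{\mathfrak M_X} = ({}^\natural)^{-1}$, whence by the uniqueness built into Lemma \ref{lemma:contravariant} we get $\eta_{(X,u)} = \mathfrak M_X$. This matches (i) and completes the identification of the unit and counit, finishing the proof.
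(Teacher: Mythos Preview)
Your proposal is correct and follows essentially the same approach as the paper's proof: both apply Lemma \ref{lemma:contravariant} with $F=\SS$ and $\epsilon_{(S,u)}={}^\natural$, identify the resulting functor with $\B$ via Lemma \ref{lemma:description-of-the-unique}, and then verify $\eta_{(X,u)}=\mathfrak M_X$ by the same computation using Lemma \ref{lemma:three-maps}(ii) to show $\SS_{\mathfrak M_X}\circ{}^\natural=\id_{C_X}$.
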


\begin{proof}
We will use Lemma \ref{lemma:contravariant} replacing
\begin{itemize}
\item the category  $\mathsf{C}$ by $\Bms$, 
\item the category $\mathsf{D}$ by $\uSlg$, 
\item the functor $F$ by $\SS$, 
\item the assignment $Y \in \mathsf{D} \mapsto G_Y \in \mathsf{C}$ by
 the assignment $(S,u) \in \uSlg \mapsto (\maxspec(S), u^\natural) \in \Bms$, 
 and 
 \item
 the assignment $Y \mapsto (\epsilon_Y \colon Y \to F(G_Y))$ by the assignment  
	$(S, u) \mapsto (\natural \colon (S, u) \to \SS(\maxspec(S), u^\natural))$.
 \end{itemize}

 \noindent
By Propositions  \ref{proposition:faithful} and \ref{proposition:full},  
 $\SS$ is faithful and full.
By  Theorem \ref{theorem:accozzaglia}(iii),
	 for every object $(S,u) \in \uSlg$ the function 
	 $^\natural \colon (S, u) \to \SS(\maxspec(S), u^\natural)$ is an isomorphism.
In view of   \cref{lemma:contravariant}, 
 a functor 
	$$\B' \colon \uSlg^\mathrm{op} \to \Bms$$
	 such that $\SS$ and $\B'$ are part of a duality is obtainable by
	 the following stipulations:

\smallskip	
	---let $\B'$ send any object $(S,u) \in \uSlg$ to the object $(\maxspec(S), u^\natural)$, and

\smallskip	
	---let $\B'$ send any morphism $\psi \colon (S, u) \to (T, v)$ in $\uSlg$ 
	to  the {\it uniquely determined}  (as specified below)  morphism $\maxspec(T)
	\to \maxspec(S)$ making the following diagram commute:
	\begin{equation*}
		\begin{tikzcd}
			(S,u) \arrow{r}{^\natural} \arrow[swap]{d}{\psi} 
			& \SS(\maxspec(S), u^\natural) \arrow{d}{\SS_\lambda}\\
			(T, v) \arrow[swap]{r}{^\natural} 
			& \SS(\maxspec(T), v^\natural).
		\end{tikzcd}
	\end{equation*}

\noindent	
	By \cref{lemma:description-of-the-unique},
	for  any $\mathfrak{m}\in \maxspec(T)$
	 this morphism  necessarily transforms the maximal ideal  $\mathfrak m$
	 into the maximal ideal   $\psi^{-1}(\mathfrak{m}) = \{f\in S\mid \psi(f) \in \mathfrak m\}$.
	We conclude that  $\B'$ (is well defined and)  {\it coincides with the functor}  $\B$
	of \cref{definition:B}. 
	Therefore,  $\B$  is  part of a duality with  $\SS$.
	
	Finally, to prove that the unit is $(\mathfrak{M}_X \colon (X, u) \to (\maxspec(C_{X}),u^\natural))_{(X,u) \in \Bms}$, we must
	 prove that, for every $(X, u) \in \Bms$, the function $\SS(\mathfrak{M}_X)
	  \colon \SS\B\SS(X,u) \to \SS(X,u)$ is the inverse of 
	  $^\natural \colon \SS(X,u) \to \SS\B\SS(X,u)$.
	Since these are isomorphisms, it is enough to prove that
	the following composite is the identity:
	\[
		\SS(X,u)\xrightarrow{^\natural}\SS\B\SS(X,u) \xrightarrow{\SS(\mathfrak{M}_X)}\SS(X,u). 
	\]
	The function $\SS(\mathfrak{M}_X) \colon C_{\maxspec(C_X)} \to C_X$,
	obtained by applying $\SS$ to the morphism $\mathfrak{M}_X 
	\colon (X,u) \to (\maxspec(C_X), u^\natural)$, maps any $g \in 
	C_{\maxspec(C_X)}$ to the function $\SS(\mathfrak{M}_X)(g) \in
	 C_X$
	   that maps $x \in X$ to $g(\mathfrak{M}_X(x))
  \frac{u(x)}{u^\natural(\mathfrak{M}_X(x))}$. 
	  The latter product 
  equals $g(\mathfrak{M}_X(x))$ because, 
	  by \cref{lemma:three-maps}(ii), $u^\natural(\mathfrak{M}_X(x)) = u(x)$.
	In particular, for every $f \in C_X$, \,\,
	$\SS(\mathfrak{M}_X)$ maps $f^\natural$ to the function
	 that maps $x$ to $f^\natural(\mathfrak{M}_X(x))$. The latter 
	 equals $f(x)$ by \cref{lemma:three-maps}(ii).
	Therefore, for every $f \in C_X$, $\SS(\mathfrak{M}_X)(f^\natural) = f$.
We have shown that the composite
		\[
	\SS(X,u)\xrightarrow{^\natural}\SS\B\SS(X,u) \xrightarrow{\SS(\mathfrak{M}_X)}\SS(X,u) 
	\]
	is the identity, and hence
	 the unit is $(\mathfrak{M}_X \colon (X, u)
	  \to (\maxspec(C_{X}),u^\natural))_{(X,u) \in \Bms}$.
The proof is complete.
\end{proof}

 \medskip

\section{Limits in \texorpdfstring{$\Bms$}{Bms} and colimits in the categories \texorpdfstring{$\uSlg$}{uSlg} and \texorpdfstring{$\SMV$}{SMV}}
	\label{section:sei}

In this section  we consider the existence problem for limits in $\Bms$, 
and  for  colimits in $\uSlg$
and  $\mathsf{SMV}$.

We refer to \cite[Definition 2.4.1]{bor} and \cite[\S 12]{ada} for background on limits and colimits.
For products in category theory we refer to \cite[Definition 10.19]{ada}
and  \cite[Definition 2.1.1]{bor}.
For coproducts see  \cite[Table 10.63]{ada}  and \cite[Definition 2.2.1]{bor}.

We show that $ \mathsf{Bms}$ has all finite limits
 (i.e., limits of diagrams indexed by a category with a finite set of objects and a finite set of morphisms).
All these limits have an easy description, as they are computed 
as in the category $\Set$ of sets and functions. Moreover,
 they are computed as in the category of boolean spaces.
It will follow that the dual categories $\uSlg$, and $\mathsf{SMV}$ have finite colimits. 
 
For a finite subset $I \subseteq \Z_{>0}$ 
 we let $\LCM(I)$ denote the least common multiple of $I$, i.e., the supremum of $I$ in the poset $\Z_{>0}$ ordered by divisibility.

\begin{theorem}[Description of finite limits]
 \label{t:finite-limits}
	\,\,\,The category $\Bms$ of boolean multispaces is finitely complete.
	More generally, $\Bms$ has limits of all diagrams indexed by a category with a finite set of objects.
	
	If $D \colon \mathsf{I} \to \Bms$ is one such diagram, then a limit $(p_i \colon (L,v) \to D(i))_{i \in \mathsf{I}}$ over $D$ is given by:

	\begin{enumerate}[label=(\roman*)]
		\item \label{i:the-limit}
		$L = \{(x_i)_{i \in \mathsf{I}} \in \prod_{i\in I}D(i)\mid \text{for all }\mathsf{I}\text{-morphism }h \colon i \to j,\, D(h)(x_i) = x_j\}$;

\smallskip
		\item \label{i:the-maps}
		for each $i \in \mathsf{I}$, $p_i \colon L \to D(i)$ maps $(x_j)_{j \in \mathsf{I}}$ to $x_i$; 

\smallskip		
		\item \label{i:the-topology}
		the topology on $L$ is the coarsest topology making every $p_i$ continuous; equivalently, the subspace topology induced by the product topology of $\prod_{i\in I}D(i)$;

\smallskip		
		\item \label{i:the-multiplicities}
		for $x \in L$, $v(x) = \LCM\{u_i(p_i(x)) \mid i \in \mathsf{I}\}$,
		 where $u_i$ is the multiplicity of $D(i)$.
	\end{enumerate}
	
\end{theorem}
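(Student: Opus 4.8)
The plan is to verify directly that the explicitly described cone $(p_i \colon (L,v) \to D(i))_{i \in \mathsf{I}}$ is a limit of $D$ in $\Bms$, by establishing the universal property by hand. I would organize the argument in three stages: first confirm that $(L,v)$ is a genuine $\Bms$-object and that the $p_i$ are $\Bms$-morphisms forming a cone over $D$; then construct the mediating morphism for an arbitrary competing cone; and finally check that this morphism is unique. The explicit formulas in \ref{i:the-limit}--\ref{i:the-multiplicities} are exactly what one would write down, since limits should be computed as in $\Set$ (for the underlying set and maps) and as in boolean spaces (for the topology), with clause \ref{i:the-multiplicities} supplying the multiplicity.

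First I would treat the underlying topological object. The product $\prod_{i \in \mathsf{I}} D(i)$ is a \emph{finite} product (finitely many objects) of boolean spaces, hence a boolean space. The set $L$ is the intersection, over all $\mathsf{I}$-morphisms $h \colon i \to j$, of the sets $\{x \mid D(h)(p_i(x)) = p_j(x)\}$; each such set is the equalizer of two continuous maps into the Hausdorff space $D(j)$ and is therefore closed, so $L$ is a closed subspace of a boolean space and is itself a boolean space (this survives even when $\mathsf{I}$ has infinitely many morphisms). For the multiplicity, each composite $u_i \circ p_i \colon L \to \Z_{>0}$ is continuous, hence locally constant, and $v = \LCM\{u_i \circ p_i \mid i \in \mathsf{I}\}$ is a finite least common multiple of locally constant $\Z_{>0}$-valued functions, so it is again continuous and $\Z_{>0}$-valued; thus $(L,v) \in \Bms$. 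Each $p_i$ is continuous by the choice of topology in \ref{i:the-topology}, and it decreases multiplicity because $u_i(p_i(x))$ divides $\LCM\{u_j(p_j(x)) \mid j \in \mathsf{I}\} = v(x)$ by the very definition of $\LCM$ as supremum in the divisibility order. The cone identities $D(h) \circ p_i = p_j$ hold because $L$ consists precisely of the compatible families.

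Next I would verify the universal property. Let $(q_i \colon (M,w) \to D(i))_{i \in \mathsf{I}}$ be any cone over $D$ in $\Bms$. Compatibility $D(h) \circ q_i = q_j$ guarantees that $m \mapsto (q_i(m))_{i \in \mathsf{I}}$ lands in $L$, defining a function $f \colon M \to L$ with $p_i \circ f = q_i$ for every $i$. This $f$ is continuous, since $L$ carries the initial topology making all $p_i$ continuous and each $p_i \circ f = q_i$ is continuous. The decisive step is that $f$ decreases multiplicity: $v(f(m)) = \LCM\{u_i(q_i(m)) \mid i \in \mathsf{I}\}$, and since each $q_i$ decreases multiplicity, every $u_i(q_i(m))$ divides $w(m)$, so $w(m)$ is a common multiple of the family and is therefore divisible by its least common multiple $v(f(m))$. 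Uniqueness is then immediate: any $\Bms$-morphism $g$ with $p_i \circ g = q_i$ must have underlying function $g(m) = (q_i(m))_{i \in \mathsf{I}} = f(m)$, and a $\Bms$-morphism is determined by its underlying function.

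The main obstacle, or rather the conceptual heart, will be the interaction between the multiplicity functions and the divisibility order in the third stage. The topological content is routine once one recalls that boolean spaces are closed under finite products and closed subspaces; what is substantive is recognizing that the multiplicity-decreasing condition, read through the divisibility order, converts the defining cone equations into exactly the universal property of the $\LCM$ as a supremum in $(\Z_{>0}, \mid)$. This is what forces clause \ref{i:the-multiplicities}: the inequality $v(f(m)) \mid w(m)$ holds \emph{if and only if} each $u_i(q_i(m)) \mid w(m)$, which is precisely what makes the otherwise merely topological limit into a limit in $\Bms$.
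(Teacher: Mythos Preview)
Your argument is correct and follows essentially the same approach as the paper's proof, which merely sketches the verification by noting that (i)--(iii) are the standard limit construction in $\Set$ and in boolean spaces, and then observes that finiteness of $\mathrm{Ob}(\mathsf{I})$ makes the $\LCM$ function $(\Z_{>0})^{\mathrm{Ob}(\mathsf{I})} \to \Z_{>0}$ continuous (the domain being discrete). Your write-up simply spells out in full the details that the paper leaves as ``straightforward'', including the explicit check of the universal property and the role of $\LCM$ as supremum in the divisibility order.
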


\begin{proof}
	The proof is straightforward. Items 
	(i--ii) amount to the usual construction of limits in $\Set$.
	Likewise, 
	(i--iii) amount to the usual construction of limits
	 in the category of (topological spaces, as well as in its full subcategory of) boolean spaces.
	The finiteness of the set of objects of \,\,$\mathsf{I}$\,\,
	 is used to show that the function
	\begin{align*}
		(\Z_{>0})^{\mathrm{Ob}(\mathsf{I})} & \longrightarrow \Z_{>0}\\
		(k_i)_{i \in \mathsf{I}} & \longmapsto \LCM\{k_i \mid i \in \mathsf{I}\}
	\end{align*}
	is well-defined and continuous,  because 
	$(\Z_{>0})^{\mathrm{Ob}(\mathsf{I})}$ is discrete, 
	being a finite power of discrete spaces.  This guarantees 
	that $v$ is well-defined and continuous.
\end{proof}

\begin{remark}
	Note that the limits in \cref{t:finite-limits} (in particular, all finite limits) 
	are computed in the same way as they are in $\Set$,  in the category 
	of topological spaces, and in the category of boolean spaces.
\end{remark}

\begin{remark}
From \cref{t:finite-limits} we immediately have:
	\begin{quote}
	{\it The category $\Bms$ of boolean multispaces
	 has all finite products, equalizers and pullbacks} 
	 (which are all special instances of finite limits).
	 \end{quote}
Thus for instance:
	\begin{enumerate}[label = (\roman*)]
		\item
		The product 
		$(X,u)\times (X',u')$ in $\Bms$
		is given by  
		$(X\times X', \,\, \LCM(u, u'))$,
		where the function 
		$ \LCM(u, u')\colon  X\times X' \to \Z_{>0}$
		maps  any
		pair  $(x,x')\in X\times X'$  to the least
		common multiple of  $u(x)$ and $u'(x')$.
		The (canonical projection) morphism of
		$(X,u)\times (X',u')$  into $(X,u)$ (resp., into $(X',u')$)  is
		determined by  the projection
		of  $X\times X'$  onto $X$  (resp., onto $X'$).
		
		\item
		The category  $\mathsf{Bms}$  of boolean multispaces 
		has a (unique up to isomorphism)
		terminal object $(X,u)$, namely the singleton $X=\{*\}$
		equipped with the function $u$ defined by $u(*)=1$.

		\item
		Let $f,g\colon (X,u) \rightrightarrows (X',u')$  be 
		two parallel  $\mathsf{Bms}$-morphisms.
		The equalizer of $f$ and $g$ is $(E,e)$, where
		$E=\{x\in X\mid f(x)=g(x)\}$ and $e\colon E\to \Z_{>0}$  coincides with
		$u$ over $E$, together with the inclusion $(E,e) \hookrightarrow (X,u)$.
	\end{enumerate}
	Furthermore, by \cite[Theorem 12.4 (1)$\Rightarrow$(3)]{ada},
	 from the finite completeness of $\Bms$ it follows that 
	$\Bms$ 
	 {\it has finite intersections}\footnote{A category is said to \emph{have finite 
	 intersections} if for every object $X$ the partially ordered class 
	 of subobjects of $X$ has finite infima. See, e.g., \cite[Definition~12.1.(4)]{ada}.}.
\end{remark}

\begin{corollary} 
\label{corollary:finitely-cocomplete}
	The categories of unital Specker $\ell$-groups and 
	Specker MV-algebras are finitely cocomplete. 
	More generally, both categories have a colimit for every 
	diagram indexed by a category with a finite set of objects.
\end{corollary}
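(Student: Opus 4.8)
The plan is to deduce both statements from the (dual) equivalences already established, invoking the standard principle that an equivalence of categories preserves and reflects all limits and colimits, and that a dual equivalence interchanges limits with colimits. Concretely, \cref{theorem:duality} (equivalently, \cref{theorem:enter-B-tris}) provides a dual equivalence between $\uSlg$ and $\Bms$, realized by the mutually quasi-inverse functors $\SS \colon \Bms \to \uSlg^{\mathrm{op}}$ and $\B \colon \uSlg^{\mathrm{op}} \to \Bms$. Since \cref{t:finite-limits} supplies $\Bms$ with limits of every diagram indexed by a category with a finite set of objects, the strategy is to transport these limits across the dual equivalence to obtain the required colimits in $\uSlg$, and then to carry everything over to $\SMV$ via the $\Gamma$ functor.

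For $\uSlg$ I would argue as follows. Let $D \colon \mathsf{I} \to \uSlg$ be a diagram whose indexing category $\mathsf{I}$ has a finite set of objects. A colimit of $D$ in $\uSlg$ is precisely a limit in $\uSlg^{\mathrm{op}}$ of the opposite diagram $D^{\mathrm{op}} \colon \mathsf{I}^{\mathrm{op}} \to \uSlg^{\mathrm{op}}$. Composing with the equivalence $\B$ yields a diagram $\B \circ D^{\mathrm{op}} \colon \mathsf{I}^{\mathrm{op}} \to \Bms$. The index category $\mathsf{I}^{\mathrm{op}}$ has the same (finite) set of objects as $\mathsf{I}$, so by \cref{t:finite-limits} this diagram has a limit in $\Bms$. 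Applying $\SS$, which preserves limits (being an equivalence) and satisfies $\SS\B \cong \mathrm{id}_{\uSlg^{\mathrm{op}}}$, produces a limit of $D^{\mathrm{op}}$ in $\uSlg^{\mathrm{op}}$, that is, a colimit of $D$ in $\uSlg$. Specializing $\mathsf{I}$ to an arbitrary finite category (finitely many objects \emph{and} morphisms) yields finite cocompleteness as a particular case.

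For Specker MV-algebras I would invoke the fact that the $\Gamma$ functor of \cite{mun-jfa}, an equivalence between unital $\ell$-groups and MV-algebras, restricts to an equivalence $\Gamma \colon \uSlg \to \SMV$: by \cref{definition:specker-mv-algebra}, $\SMV$ is exactly the full subcategory of MV-algebras on the essential image of $\uSlg$ under $\Gamma$, and a fully faithful, essentially surjective functor onto a full subcategory is an equivalence onto it. Since equivalent categories share the same limits and colimits, the colimits just constructed in $\uSlg$ transport to $\SMV$, establishing the analogous statement there.

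The argument presents no serious obstacle; it is essentially bookkeeping with the meta-theorem that equivalences preserve limits and colimits while dual equivalences swap them. The only point genuinely requiring attention is the indexing hypothesis: \cref{t:finite-limits} guarantees limits only for diagrams whose index category has a finite set of objects, and the dualization step replaces $\mathsf{I}$ by $\mathsf{I}^{\mathrm{op}}$. One must therefore record that passing to the opposite category leaves the class of objects unchanged, so that the finiteness assumption is preserved and the more general (not merely finite) form of the corollary follows without modification.
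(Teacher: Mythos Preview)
Your argument is correct and is exactly the one the paper intends: the corollary is stated without proof, as an immediate consequence of the dual equivalence of \cref{theorem:duality} (or \cref{theorem:enter-B-tris}) together with \cref{t:finite-limits}, and of the $\Gamma$-equivalence for the passage to $\SMV$. You have simply spelled out the routine bookkeeping that the authors left implicit.
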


We have just proved that the category $\Bms$ of boolean multispaces is finitely complete.
We now prove that it is not complete, as it lacks some infinite products.

\begin{proposition}
 \label{proposition:no-countable-power}
	The category $\Bms$ is not complete. For example, there is no countable power of the boolean multispace $(\{a,b\}, u)$ with $u(a) = 1$ and\,\, $u(b) = 2$.
\end{proposition}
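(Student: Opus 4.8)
The plan is to argue by contradiction: suppose a countable power $((P,w),(\pi_n)_{n\in\N})$ of the boolean multispace $(\{a,b\},u)$ exists in $\Bms$, with projection morphisms $\pi_n\colon(P,w)\to(\{a,b\},u)$, and then extract enough information about the boolean space $P$ and its multiplicity $w$ from the universal property to contradict the continuity of $w$. The main tool is to probe the points of $P$ by mapping in one-point multispaces. For a positive integer $m$, write $(\{\ast\},m)$ for the one-point boolean multispace whose point has multiplicity $m$, and write $\Bms(A,B)$ for the set of $\Bms$-morphisms from $A$ to $B$. A morphism $(\{\ast\},m)\to(\{a,b\},u)$ is a choice of a value in $\{a,b\}$ whose $u$-multiplicity divides $m$, so there are exactly two of them when $m$ is even and exactly one (namely $a$) when $m$ is odd; and a morphism $(\{\ast\},m)\to(P,w)$ is exactly a point $p\in P$ with $w(p)\mid m$. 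First I would record the resulting bijection coming from the universal property: the map $p\mapsto(\pi_n(p))_{n\in\N}$ identifies $\{p\in P\mid w(p)\mid m\}$ with $\prod_{n\in\N}\Bms((\{\ast\},m),(\{a,b\},u))$.

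The next step is to pin down both $P$ and $w$ from these bijections. Taking $m=1$ gives a \emph{unique} point $p_0\in P$ with $w(p_0)=1$, and it satisfies $\pi_n(p_0)=a$ for all $n$. For any even $m$ the same bijection identifies $\{p\in P\mid w(p)\mid m\}$ with $\{a,b\}^{\N}$ via the single map $\pi:=(\pi_n)_{n}\colon P\to\{a,b\}^{\N}$. Comparing $m=2$ with an arbitrary even $m'$: the inclusion $\{p\mid w(p)\mid 2\}\subseteq\{p\mid w(p)\mid m'\}$ is carried by the injective map $\pi$ onto $\{a,b\}^{\N}$ from \emph{both} sides, so the two sets coincide. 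Applying this with $m'=2w(p)$ for an arbitrary $p$ shows $w(p)\mid 2$, i.e.\ $w(p)\in\{1,2\}$ for every $p\in P$. Consequently $P=\{p\mid w(p)\mid 2\}$ and $\pi\colon P\to\{a,b\}^{\N}$ is a continuous bijection; since $P$ is a boolean space and $\{a,b\}^{\N}$ is Hausdorff, $\pi$ is a homeomorphism, so I may identify $P$ with the Cantor space $\{a,b\}^{\N}$ with the $\pi_n$ as coordinate projections.

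To finish, I would use that each projection $\pi_n\colon(P,w)\to(\{a,b\},u)$ is itself a $\Bms$-morphism, so $u(\pi_n(p))\mid w(p)$ for every $p$. Whenever some coordinate $p_n=b$ this forces $2\mid w(p)$, hence $w(p)=2$ since $w(p)\in\{1,2\}$. The only point with no coordinate equal to $b$ is the all-$a$ point, which is exactly $p_0$, with $w(p_0)=1$. Therefore $w^{-1}(\{1\})=\{p_0\}$. But $w$ is continuous and $\{1\}$ is clopen in the discrete space $\Z_{>0}$, so $w^{-1}(\{1\})=\{p_0\}$ would be clopen, making $p_0$ an isolated point of $P$ — contradicting that the Cantor space $\{a,b\}^{\N}$ has no isolated points. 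Since a complete category would in particular have this countable product, the contradiction shows that $\Bms$ is not complete.

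The step I expect to be the real obstacle is the second one: forcing the underlying space to be \emph{precisely} the Cantor space and ruling out any points of multiplicity greater than $2$, using nothing but the universal property. The delicate ingredient is the comparison of the bijections attached to different even values of $m$, which is exactly what collapses all higher multiplicities down to $\{1,2\}$; once $P\cong\{a,b\}^{\N}$ and $w^{-1}(\{1\})=\{p_0\}$ have been established, the discontinuity of $w$ (and hence the failure of the power to exist) is immediate.
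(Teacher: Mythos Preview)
Your proof is correct and follows essentially the same approach as the paper's: probe the putative power by one-point multispaces to force $P\cong\{a,b\}^{\N}$ with coordinate projections, pin down the multiplicity as $1$ on the all-$a$ point and $2$ elsewhere, and derive a contradiction from the non-continuity of this function. The only cosmetic difference is that you package the argument via the bijections $\{p\mid w(p)\mid m\}\cong\prod_n\Bms((\{\ast\},m),(\{a,b\},u))$ and compare different even values of $m$, whereas the paper proves injectivity and surjectivity of $\pi$ directly and then bounds $v$ from both sides by $\LCM\{u(x_i)\}$; the underlying ideas are identical.
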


\begin{proof}
We first sketch the main idea of the proof.
	Roughly speaking, the product of countably many copies of $(\{a,b\}, u)$ does not exist,
	 because the natural choice for the multiplicity function on the cartesian 
	 product $\{a,b\}^\omega$---namely the function 
	 $(x_i)_{i} \mapsto \LCM\{u(x_i) \mid i \in \omega\}$---is not continuous.
	This is essentially due to the fact that the function 
	$\LCM = \max \colon \{1,2\}^\omega \to \{1,2\}$  is not continuous.
	This shows that the natural definition of 
	product does not work.
	
	We now prove that no choice works.
	By way of contradiction, let us suppose that
  one such countable power 
$(p_i \colon (P,v) \to (\{a,b\},u))_{i \in \omega}$ exists.

\smallskip
\noindent
{\it Claim:}  Up to a bijection,  $P$ is the cartesian product $\{a,b\}^\omega$ and each $p_i$ is the projection on the $i$-th coordinate.
	
	\medskip
We define
	\begin{align*}
		h \colon P & \longrightarrow \{a,b\}^\omega\\
		x & \longmapsto (p_i(x))_{i \in \omega},
	\end{align*}
with the intent of showing that  this function is a bijection.
To  prove that $h$ is injective let $x,y \in P$ be such that $h(x) = h(y)$.
	This means that, for every $i \in \omega$, $p_i(x) = p_i(y)$.
	Let $\{*\}$ be a singleton space, equipped 
	with the multiplicity $\{*\} \to \Z_{>0}$ that maps $*$ to $\LCM(v(x),v(y))$.
	The functions 
	\begin{align*}
		f \colon \{*\} & \longrightarrow P & g \colon \{*\} & \longmapsto P\\
		* & \longmapsto x & * & \longmapsto y
	\end{align*}
	are $\Bms$-morphisms, and for all $i \in \omega$
	we have $p_i(f(*)) = p_i(x) = p_i(y) = p_i(g(*))$, i.e., 
	$p_i \circ f = p_i \circ g$.
	From the uniqueness in the universal property of $P$
	it follows that $f = g$, whence $x = y$.
	This proves that $h$ is injective.
	
	To prove that $h$ is surjective
	let  $(x_i)_{i \in \omega}$ be an element of $\{a,b\}^\omega$.
Again let
 $\{*\}$ be a singleton space, equipped with the multiplicity $\{*\} \to \Z_{>0}$ that maps $*$ to the least common multiple of $\{u(x_i) \mid i \in \omega\}$.
As a consequence, for every $i \in \omega$, the function 
$t_i \colon \{*\} \to \{a,b\}$ mapping $*$ to $x_i$ is a morphism.
The universal property of $P$  yields a unique morphism $s \colon \{*\} \to P$ 
such that for every $i \in \omega$\,\, $p_i(s(*)) = x_i$.
	Then $h(s(*)) = (x_i)_{i \in \omega}$ and  $h$ is surjective.

We have just proved 
 that $(p_i \colon P \to \{a,b\})_{i \in \omega}$ is (up to a bijection
	 that  will  henceforth  be ignored) the cartesian product with its canonical projections. Our claim is thus settled.

\smallskip	 
	 
	Since every $p_i$ is continuous, the topology on $P$ is finer than the product topology.
	Since any two comparable compact Hausdorff topologies on the same set are equal, the topology on $P$ is the product topology.
	Since every $p_i$ decreases multiplicities, for all $(x_i)_{i \in \omega} \in P$
	 the number $v((x_i)_{i \in \omega})$ is a multiple of $\LCM\{u(x_i) \mid i \in \omega\}$.
	However, the proof that $h$ is surjective shows that $v((x_i)_{i \in \omega})$ divides $\LCM\{u(x_i) \mid i \in \omega\}$.  As a consequence,  $v((x_i)_{i \in \omega}) = \LCM \{u(x_i) \mid i \in \omega\}$.
	
	To reach a contradiction, we observe that $v$ is not continuous: 
	Indeed, the preimage under $v$ of $\{1\}$ is the singleton $\{(1)_{i \in \omega}\}$, which is not an open subset of $P$.
\end{proof}

\begin{corollary}
\label{c:not-cocomplete}
	The categories $\uSlg$ and $\SMV$ are not cocomplete. For example, they lack the countable copower of $(\Z \times \Z, (1,2))$ and of $\{0,1\} \times \{0, \frac{1}{2}, 1\}$, respectively. \qed
\end{corollary}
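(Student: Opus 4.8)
The plan is to transport the failure of completeness of $\Bms$ recorded in \cref{proposition:no-countable-power} across the dual equivalence $\SS \colon \Bms \to \uSlg^{\mathrm{op}}$ of \cref{theorem:duality,theorem:enter-B-tris}, and then across the Mundici functor $\Gamma$. The only categorical principle used is that an equivalence of categories preserves and reflects all limits and colimits, \emph{including their existence}; dually, $\SS$ turns a product in $\Bms$ into a coproduct in $\uSlg$, so that a countable power in $\Bms$ becomes a countable copower in $\uSlg$. Hence it suffices to identify the image under $\SS$ of the witness from \cref{proposition:no-countable-power}.

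First I would compute $\SS(\{a,b\}, u)$, where $\{a,b\}$ is the two-point discrete boolean space and $u(a) = 1$, $u(b) = 2$. By \eqref{equation:S-on-objects} this equals $(C_{\{a,b\}}, u)$; since $\{a,b\}$ is discrete with two points, $C_{\{a,b\}}$ is the $\ell$-group $\Z \times \Z$ of all $\Z$-valued functions on $\{a,b\}$, and the unit $u$ corresponds to the pair $(u(a), u(b)) = (1,2)$. Thus $\SS(\{a,b\}, u) \cong (\Z \times \Z, (1,2))$. Since $\SS$ is an equivalence between $\Bms$ and $\uSlg^{\mathrm{op}}$, the countable copower of $(\Z \times \Z, (1,2))$ in $\uSlg$ exists if and only if the countable power of $(\{a,b\}, u)$ exists in $\Bms$; by \cref{proposition:no-countable-power} the latter does not exist, so $\uSlg$ lacks this copower and is therefore not cocomplete.

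Finally I would pass to $\SMV$ using $\Gamma$, which is an equivalence of categories and, by \cref{definition:specker-mv-algebra}, restricts to an equivalence $\uSlg \to \SMV$. Since equivalences reflect the existence of colimits, $\SMV$ lacks the countable copower of $\Gamma(\Z \times \Z, (1,2))$, and it remains only to identify this object. By the fiberwise computation in the proof of \cref{lemma:mv-representation}, $\Gamma$ sends the factor of constant multiplicity $1$ to the chain $\mbox{\L}_1 = \{0,1\}$ and the factor of constant multiplicity $2$ to the chain $\mbox{\L}_2 = \{0, \frac{1}{2}, 1\}$, and it preserves the finite product; hence $\Gamma(\Z \times \Z, (1,2)) \cong \{0,1\} \times \{0, \frac{1}{2}, 1\}$, as claimed. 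There is no genuine obstacle here, the substantive work being already contained in \cref{proposition:no-countable-power}; the only point deserving care is the correct dualization (power $\leftrightarrow$ copower) together with the clean statement that equivalences transport the \emph{non}existence of a universal construction.
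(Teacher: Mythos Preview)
Your proposal is correct and is exactly the argument the paper has in mind: the corollary is stated with a bare \qed, as an immediate consequence of \cref{proposition:no-countable-power} via the dual equivalence $\SS$ (and then $\Gamma$), and you have simply spelled out those two transfer steps and the identifications $\SS(\{a,b\},u)\cong(\Z\times\Z,(1,2))$ and $\Gamma(\Z\times\Z,(1,2))\cong\{0,1\}\times\{0,\tfrac12,1\}$.
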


\begin{remark}
We have seen that the category $\Bms$ is not complete. 
	However, $\Bms$ can be embedded,  as a full subcategory,
	into other complete categories already considered in the literature (\cite{cigdubmun,cigmar,abbmarspa}).
	To tweak $\Bms$ in order to get a complete category in the style of these papers, one may proceed as follows:
	\begin{itemize}
		\item 
Embed the lattice
(with respect to the divisibility order)
 with bottom $\Z_{>0}$  into a complete lattice. For instance,  
 the complete lattice of supernatural numbers as defined in \cite{cigdubmun}, 
 the complete lattice of additive subgroups of $\mathbb{R}$ containing $1$
 following  \cite{cigmar},
 or the complete lattice of natural numbers  with respect to the divisibility order 
 following  \cite{abbmarspa}.

\smallskip		
		\item 
Equip   this complete lattice with a topology   $\mathcal T$   coarser than the discrete one, that makes the join of any arity a continuous function.
(Thus, for example,  in \cite{abbmarspa},  $\mathcal T$ is the topology on $\Z_{\geq 0}$ whose
closed sets are the finite unions of principal downsets. In \cite{cigdubmun} and \cite{cigmar}, $\mathcal{T}$ is the Scott topology.)
 Consider continuity with respect to   $\mathcal T$.
	\end{itemize}
	
\end{remark}

\medskip
\noindent
As a final remark in this section,
 while finite limits in $\Bms$ are computed 
 as in $\Set$, this is not the case for all existing limits:

\begin{theorem}
\label{theore:forgetful}
	The forgetful functor from $\Bms$ to $\Set$ does not preserve limits.
\end{theorem}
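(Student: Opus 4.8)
The plan is to exhibit a single cofiltered diagram (an inverse system indexed by $\omega^{\mathrm{op}}$, hence not covered by \cref{t:finite-limits}) whose limit exists in $\Bms$ yet whose underlying set is strictly smaller than the limit of the underlying diagram in $\Set$. The guiding principle, already visible in the proof of \cref{proposition:no-countable-power}, is that testing against singleton multispaces forces the underlying set of any limit in $\Bms$ to consist only of those threads $(x_i)_i$ of the diagram whose multiplicities $\{u_i(x_i)\}_i$ admit a \emph{finite} least common multiple; by contrast, the limit in $\Set$ records \emph{all} threads. So I would engineer a diagram possessing threads of infinite multiplicity (making the two limits differ) while arranging that the finite-multiplicity threads assemble into a genuine boolean multispace realizing the limit.

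Concretely, I would take the tower $D\colon\omega^{\mathrm{op}}\to\Bms$ given by $\cdots \to (X_2,u_2)\to (X_1,u_1)\to (X_0,u_0)$ with $X_n=\{0,1,\dots,n\}$ (finite, hence boolean), bonding maps $\pi_n\colon X_{n+1}\to X_n$ defined by $\pi_n(k)=k$ for $k\le n$ and $\pi_n(n+1)=n$, and multiplicities $u_n(k)=2^{\,n-k}$. A direct check shows each $\pi_n$ is a $\Bms$-morphism, since $u_n(\pi_n(k))\mid u_{n+1}(k)$ in all cases (the only nontrivial instance being $u_n(n)=1\mid u_{n+1}(n+1)=1$). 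The threads of this system are the diagonal thread $t_\infty=(n)_{n}$ together with, for each $k\in\omega$, the thread $t_k$ that equals $n$ for $n\le k$ and stabilizes at $k$ for $n\ge k$; as a space the inverse limit is homeomorphic to $\omega+1$, with $t_\infty$ its unique limit point. Computing multiplicities along threads, $t_\infty$ has $u_n(n)=1$ for all $n$ (finite least common multiple, equal to $1$), whereas each $t_k$ has $u_n(k)=2^{\,n-k}\to\infty$ (infinite least common multiple). Thus $t_\infty$ is the \emph{unique} finite-multiplicity thread, and I would propose the one-point multispace $(\{\ast\},1)$, with projections $\ast\mapsto n\in X_n$, as the limit in $\Bms$.

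The verification I would then carry out is the universal property. Given any cone $q_n\colon (Z,w)\to (X_n,u_n)$, for each $z\in Z$ the thread $(q_n(z))_n$ satisfies $u_n(q_n(z))\mid w(z)$ for every $n$, so its least common multiple divides $w(z)<\infty$; hence the thread must be $t_\infty$ and $q_n(z)=n$ for all $n$. Therefore the only cone is the constant one, and it factors uniquely through $(\{\ast\},1)$ via the (automatically continuous, and multiplicity-decreasing since $1\mid w(z)$) constant map, establishing that $(\{\ast\},1)$ is the limit. Since the limit of the underlying diagram $U\circ D$ in $\Set$ is the full inverse limit $\cong\omega+1$, which is infinite, the comparison map $U(\lim D)\to \lim(U\circ D)$ is the inclusion $\{t_\infty\}\hookrightarrow\omega+1$ and is not a bijection; hence the forgetful functor $U\colon\Bms\to\Set$ does not preserve this limit. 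I expect the main obstacle to be pinning down exactly this universal property---in particular, arguing rigorously that the divisibility constraint imposed by the \emph{finite} multiplicity $w(z)$ excludes every thread except the diagonal one, which is precisely what collapses the limit to a single point. A related point worth stressing is \emph{why} the limit exists here at all, unlike in the non-existence phenomena underlying \cref{proposition:no-countable-power}: the finite-multiplicity locus is the single closed point $t_\infty$ (so compactness is automatic), whereas if it were instead the set of isolated points $\{t_k\}$ it would be a non-compact discrete space and no boolean-multispace limit could exist.
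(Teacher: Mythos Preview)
Your argument is correct: the inverse system you construct does have $(\{*\},1)$ as its limit in $\Bms$, while the $\Set$-limit is the full thread space $\omega+1$, so the forgetful functor fails to preserve it. The key step---that the divisibility constraint $u_n(q_n(z))\mid w(z)$ for all $n$ forces every thread coming from a cone to be the diagonal $t_\infty$---is exactly right, and your verification of the universal property is sound.

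The paper, however, takes a far shorter route. It simply observes that the product in $\Bms$ of countably many singleton multispaces, each with a distinct multiplicity (say $(\{*\},n)$ for $n\in\Z_{>0}$), is the \emph{empty} boolean multispace: any cone from a nonempty $(Z,w)$ would force $n\mid w(z)$ for every $n$, which is impossible. Meanwhile the product in $\Set$ of these singletons is a singleton, so the forgetful functor sends an empty set to where a singleton should be. Your construction is more elaborate---a genuine cofiltered diagram rather than a discrete one---and it has the pleasant feature that the $\Bms$-limit is nonempty (a singleton) rather than empty, which perhaps illustrates more vividly how the divisibility constraint carves out a proper closed subspace of the $\Set$-limit. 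But for the bare statement of the theorem, the paper's one-line product example is considerably more economical.
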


\begin{proof}
	The product in $\Bms$ of countably many copies of a singleton, each copy with a different multiplicity, is the empty boolean multispace.
\end{proof}

\medskip

\section{Colimits in \texorpdfstring{$\Bms$}{Bms}, and limits in \texorpdfstring{$\uSlg$}{uSlg} and  \texorpdfstring{$\SMV$}{SMV}}

In this section we show that the category $\Bms$ of boolean multispaces has finite coproducts but is not finitely cocomplete, as it lacks some pushouts, as well as some coequalizers. 
Therefore, dually, we get that $\uSlg$
and  $\SMV$ have finite products but are not finitely complete, as they lack some pullbacks, as well as some equalizers.

\begin{proposition}[Finite coproducts in $\Bms$]
 \label{proposition:finite-coproducts} \hfill 
	\begin{enumerate}[label = (\roman*)]
	
		\item
		The  coproduct 
		$(X,u) \amalg (X',u')$ in the category $\Bms$ of boolean multispaces
		is given by the disjoint union
		$( X\sqcup X', u\sqcup u')$,  together with the
		natural  multiplicity-preserving 
		injective morphisms of $(X,u)$ and $(X',u')$ into
		$( X\sqcup X', u\sqcup u')$.

 \smallskip
		\item
		The category of boolean multispaces 
		has a (unique up to isomorphism)
		initial object $(X,u)$, namely the empty space equipped 
		with the unique map from $\emptyset$ to $\mathbb{Z}_{>0}$.
	
	\end{enumerate}

\smallskip
	Therefore, $\Bms$ has finite coproducts.
\end{proposition}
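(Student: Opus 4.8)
The plan is to verify directly the universal property of the binary coproduct for the proposed candidate $(X \sqcup X', u \sqcup u')$, to check separately that the empty boolean multispace is initial, and then to invoke the standard fact that a category possessing an initial object together with all binary coproducts has all finite coproducts. This last reduction disposes of the final clause ``Therefore, $\Bms$ has finite coproducts'' once (i) and (ii) are in place.

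First I would confirm that $(X \sqcup X', u \sqcup u')$ is a legitimate $\Bms$-object. The disjoint union of two boolean spaces is again compact, Hausdorff and totally disconnected, each summand being clopen in the union; and the function $u \sqcup u' \colon X \sqcup X' \to \Z_{>0}$ agreeing with $u$ on $X$ and with $u'$ on $X'$ is continuous, being continuous on each of the two clopen pieces. The canonical inclusions $\iota \colon X \to X \sqcup X'$ and $\iota' \colon X' \to X \sqcup X'$ are continuous and multiplicity-preserving, so they are $\Bms$-morphisms (with multiplicity constantly $1$).

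The core step is the universal property. Given any $(Z,w) \in \Bms$ together with $\Bms$-morphisms $f \colon (X,u) \to (Z,w)$ and $f' \colon (X',u') \to (Z,w)$, the copairing $h \colon X \sqcup X' \to Z$ defined by $h|_X = f$ and $h|_{X'} = f'$ is the unique \emph{function} satisfying $h \circ \iota = f$ and $h \circ \iota' = f'$; its continuity is the universal property of the disjoint union in the category of boolean spaces. To see that $h$ is a $\Bms$-morphism, i.e.\ that it decreases multiplicity, I would note that for $x \in X$ the integer $w(h(x)) = w(f(x))$ divides $(u \sqcup u')(x) = u(x)$ because $f$ is a $\Bms$-morphism, and symmetrically for points of $X'$; thus the multiplicity $\zeta_h$ restricts to $\zeta_f$ on $X$ and to $\zeta_{f'}$ on $X'$ and is $\Z_{>0}$-valued. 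Uniqueness of $h$ as a $\Bms$-morphism is inherited from its uniqueness as a function. This settles part (i). For part (ii), the empty space is a boolean space carrying a unique map to $\Z_{>0}$; for any $(Z,w)$ the empty function $\emptyset \to Z$ is vacuously continuous and vacuously multiplicity-decreasing, and is the only morphism with this domain, so the empty boolean multispace is initial.

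I do not expect a genuine obstacle here. In sharp contrast with the infinite-product situation of \cref{proposition:no-countable-power}, where the $\mathsf{LCM}$ multiplicity failed to be continuous, the coproduct multiplicity is simply glued from the two summands over clopen pieces, so no continuity pathology can arise. The only point deserving slight care is the verification that the multiplicity-decreasing condition survives the copairing, and this is immediate from checking divisibility separately on each of the two clopen summands.
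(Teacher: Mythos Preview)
Your proposal is correct and follows essentially the same approach as the paper: both arguments verify the universal property of the disjoint union directly by copairing, observing that the resulting map decreases multiplicities because each of $f$ and $f'$ does, and both dispose of (ii) trivially. Your write-up is slightly more explicit in checking that $(X\sqcup X', u\sqcup u')$ is a $\Bms$-object and that the inclusions are $\Bms$-morphisms, and in spelling out the ``initial object plus binary coproducts implies finite coproducts'' reduction, but there is no substantive difference.
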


\begin{proof}
	(i)
	We first show that $( X\sqcup X', u\sqcup u')$ has the required universal property.
	Consider a boolean multispace $(Y,v)$ and $\Bms$-morphisms $f \colon (X, u) \to (Y,v)$ and $f' \colon (X', u') \to (Y, v)$.
	The continuous map $f \sqcup f' \colon X \sqcup X' \to Y$
	 whose restriction on $X$ is $f$ and whose restriction on $X'$ is $f'$ is the unique map making the following diagram commute:
	\[
	\begin{tikzcd}
		X \arrow[swap]{rd}{f} \arrow[hook]{r}{}& X \sqcup X' \arrow[dashed, "f \sqcup f'" description]{d}{} & X' \arrow{ld}{f'} \arrow[swap,hook']{l}{}\\
		& Y
	\end{tikzcd}
	\]
	Moreover, this map decreases multiplicities,
	 because so do $f$ and $f'$ and   the multiplicity on $X\sqcup X'$ is the disjoint union of the multiplicities on $X$ and $X'$.
	
	(ii)
	This is immediate.
\end{proof}

From \cref{proposition:finite-coproducts}  it follows that the dual categories $\uSlg$ of unital Specker $\ell$-groups and $\SMV$ of Specker MV-algebras have finite products.

We next
 give an explicit description of such finite products, 
 showing that they are computed as in $\Set$.

\begin{proposition}[Finite products in $\uSlg$ and $\SMV$]
	The categories $\uSlg$ of unital Specker $\ell$-groups and $\SMV$ of Specker MV-algebras have finite products.
	They are computed as the usual products of algebras (i.e.,
	they are  computed 
	as in the categories of unital $\ell$-groups and of MV-algebras):
	\begin{enumerate}[label = (\roman*)]
		\item
	The product of two unital Specker $\ell$-groups $(G,u)$ and $(G',u')$ is the cartesian product $(G \times G', (u,u'))$, and similarly for Specker MV-algebras.

\smallskip		
		\item The terminal object in $\uSlg$ is the trivial algebra, and similarly for $\SMV$.
	\end{enumerate}
\end{proposition}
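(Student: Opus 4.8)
The plan is to derive both statements from the duality already established, together with one Specker-specific closure property. First I would observe that the \emph{existence} of finite products in $\uSlg$ is essentially free: by \cref{proposition:finite-coproducts} the category $\Bms$ has finite coproducts, so under the equivalence $\SS$ of \cref{theorem:duality} its dual $\uSlg$ has finite products. The real content is therefore the \emph{concrete identification} of these products with cartesian products of algebras, together with the fact that the cartesian product of Specker $\ell$-groups stays inside $\uSlg$.

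The key lemma I would isolate is that the cartesian product $(G \times G', (u,u'))$ of two unital Specker $\ell$-groups is again a unital Specker $\ell$-group. An element $(s,s') \in G \times G'$ is singular precisely when $s$ and $s'$ are singular in $G$ and $G'$ respectively, since the defining condition $a \wedge ((s,s') - a) = 0$ decomposes coordinatewise. Hence, for singular $s \in G$ and $s' \in G'$, the elements $(s,0)$ and $(0,s')$ are singular in $G \times G'$. Writing an arbitrary $(g,g')$ as $(g,0) + (0,g')$ and expressing $g$ and $g'$ as integer combinations of singular elements of $G$ and of $G'$, one sees that $(g,g')$ is an integer combination of elements of the form $(s,0)$ and $(0,s')$, so $G \times G'$ is generated by its singular elements. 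That $(u,u')$ is a unit is routine: for $(g,g')$ one takes the larger of the two bounding integers provided by the unit property of $u$ and of $u'$.

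With this closure in hand I would conclude the $\ell$-group case as follows. The category of unital $\ell$-groups has finite products given by the cartesian product $(G \times G', (u,u'))$ with the componentwise projections, which are evidently unital $\ell$-homomorphisms; and $\uSlg$ is a \emph{full} subcategory closed under these products by the key lemma. A full subcategory closed under the ambient products inherits them (the mediating $\ell$-homomorphism between objects of $\uSlg$ again lies in $\uSlg$ by fullness, and is unique there because it is unique in the ambient category), so the product in $\uSlg$ is exactly $(G \times G', (u,u'))$. The empty product yields the terminal object: it is the trivial one-element $\ell$-group, which is vacuously Specker with unit $0$, and receives a unique unital $\ell$-homomorphism from every object. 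As a cross-check, this agrees with the duality: the coproduct in $\Bms$ is the disjoint union $(X \sqcup X', u \sqcup u')$, and $\SS(X \sqcup X') = (C_{X \sqcup X'}, u \sqcup u') \cong (C_X \times C_{X'}, (u,u'))$ because a continuous $\Z$-valued function on a disjoint union is just a pair of continuous functions; the initial empty space dualizes to the trivial group.

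Finally I would transfer everything to $\SMV$ through the $\Gamma$ functor. Since $\Gamma$ is an equivalence it preserves finite products (as already invoked in \cref{lemma:mv-representation}), and by \cref{definition:specker-mv-algebra} it carries unital Specker $\ell$-groups to Specker MV-algebras. Hence for $A = \Gamma(G,u)$ and $A' = \Gamma(G',u')$ the product in $\SMV$ is $\Gamma((G \times G', (u,u')))$, which is the cartesian product $A \times A'$ of MV-algebras and again a Specker MV-algebra; likewise the terminal object is $\Gamma$ of the trivial $\ell$-group, i.e.\ the trivial MV-algebra. The only genuinely non-formal step in the whole argument is the key lemma---the closure of Specker $\ell$-groups under finite products---which hinges on the coordinatewise characterization of singular elements; everything else is the general principle that full subcategories closed under products inherit them, together with the product-preservation of the equivalences $\SS$ and $\Gamma$.
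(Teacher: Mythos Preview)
Your proof is correct. In fact, the short paragraph you label a ``cross-check'' is essentially the paper's entire argument: the paper simply observes that the product of $\SS(X,u)$ and $\SS(X',u')$ in $\uSlg$ is $\SS((X,u)\amalg(X',u')) = \SS(X\sqcup X', u\sqcup u')$, and that $C_{X\sqcup X'}\cong C_X\times C_{X'}$ is the cartesian product; the terminal object is handled identically, by applying $\SS$ to the empty multispace. Your primary route is genuinely different: you give a direct algebraic verification that the cartesian product of two unital Specker $\ell$-groups is again Specker (via the coordinatewise characterisation of singular elements), and then invoke the general principle that a full subcategory closed under ambient products inherits them. This buys you a proof that does not depend on the duality, and makes explicit why the Specker condition survives---information the paper's proof leaves implicit inside the isomorphism $C_{X\sqcup X'}\cong C_X\times C_{X'}$. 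The paper's approach is shorter and exploits the machinery already in place; yours is more self-contained and would work even before \cref{theorem:duality} is established.
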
 
\begin{proof}
	(i)
	Given two boolean multispaces $(X,u)$ and $(X', u')$, the categorical product of $\SS(X,u)$ and $\SS(X',u')$ is isomorphic to $\SS((X,u) \amalg (X',u'))$, i.e., by \cref{proposition:finite-coproducts}(i), $\SS((X \sqcup X', u \sqcup u'))$, which is the cartesian product of $\SS(X,u)$ and $\SS(X',u')$.
	
	\smallskip
	(ii)
	It is easily seen that the trivial algebra is a unital Specker $\ell$-group.
	For an alternative proof, we
	can note that 
	the terminal object in $\uSlg$ is obtained by applying $\SS$
	 to the initial object of $\Bms$, which, by
	  \cref{proposition:finite-coproducts}, is the empty boolean multispace.
	It follows that the terminal object in $\uSlg$ is the trivial algebra.
\end{proof}

\begin{theorem}
	\label{t:no-pushouts}
	$\Bms$ lacks some pushouts, as well as some coequalizers.
\end{theorem}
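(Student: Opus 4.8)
The plan is to reduce the whole statement to the non-existence of a single, very concrete coequalizer, and then to recover the pushout claim formally. The key preliminary observation I would record is that the forgetful functor $U \colon \Bms \to \mathsf{BoolSp}$ to the category of boolean spaces is a left adjoint: its right adjoint sends a boolean space $Y$ to $(Y, \boldsymbol 1)$, the constant multiplicity $1$, since for every $(X,u)$ any continuous map $\gamma \colon X \to Y$ automatically satisfies $\boldsymbol 1(\gamma(x)) = 1 \mid u(x)$. As $\mathsf{BoolSp}$ is cocomplete (being dual to boolean algebras), it follows that whenever a coequalizer exists in $\Bms$, its underlying space and quotient map are forced to coincide with the coequalizer computed in $\mathsf{BoolSp}$; thus the only room for an obstruction lies in the multiplicity function.

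Next I would exhibit the example. Let $Y = \{a_n \mid n \geq 1\} \cup \{a_\infty\} \cup \{c\}$, where $a_n \to a_\infty$ is a convergent sequence and $c$ is an isolated point, and let $v(a_n) = v(a_\infty) = 2$ and $v(c) = 3$; this $v$ is continuous, so $(Y,v) \in \Bms$. Put $X = \{*\}$ with $u(*) = 6$ and define $f, g \colon (X,u) \rightrightarrows (Y,v)$ by $f(*) = a_\infty$ and $g(*) = c$; both are $\Bms$-morphisms, because $2 \mid 6$ and $3 \mid 6$. By the adjunction above, the underlying space of any coequalizer of $f,g$ is $Q = Y/(a_\infty \sim c)$, which is again a convergent sequence $z_n \to z_0$ (with $z_n$ the class of $a_n$ and $z_0$ the merged class of $a_\infty$ and $c$), with $q$ the quotient map. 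Writing $d(z) = \gcd\{v(y) \mid q(y) = z\}$ for the greatest common divisor over each fibre, I would note that $d(z_n) = 2$ for $n \geq 1$ while $d(z_0) = \gcd(2,3) = 1$.

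Finally I would prove that no multiplicity turns $Q$ into a coequalizer. Suppose $(Q,w)$ with $q$ were one. Since $q$ is a morphism, $w(z) \mid d(z)$, forcing $w(z_0) = 1$ and $w(z_n) \in \{1,2\}$. For each fixed $m \geq 1$, the function $w'_m$ that is $2$ at the isolated point $z_m$ and $1$ elsewhere is continuous and makes $q \colon (Y,v) \to (Q, w'_m)$ a morphism with $qf = qg$; the universal property then yields a morphism $(Q,w) \to (Q, w'_m)$ over $q$, which, $q$ being surjective, is the identity map on $Q$, whence $w'_m \mid w$ pointwise and in particular $2 \mid w(z_m)$. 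Thus $w(z_n) = 2$ for all $n \geq 1$ while $w(z_0) = 1$, so $w$ fails to be continuous at $z_0$, contradicting $(Q,w) \in \Bms$. Hence this coequalizer does not exist. To conclude the pushout claim I would recall that the coequalizer of $f,g$ is the pushout of the span $(Y,v) \xleftarrow{f \sqcup g} (X,u) \amalg (X,u) \xrightarrow{\nabla} (X,u)$, which makes sense since $\Bms$ has finite coproducts by \cref{proposition:finite-coproducts}; as this pushout would compute the non-existent coequalizer, that particular pushout does not exist either.

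I expect the main obstacle to be the middle step: showing that the multiplicity of a hypothetical coequalizer is forced to equal the discontinuous fibrewise-gcd function $d$. The adjunction disposes of the underlying-space question cleanly, but the rigidity of the multiplicity must be extracted from the universal property one point at a time, through the family of test multiplicities $w'_m$ and the induced factorizations; arranging these test objects so that they pin down $w(z_m) = 2$ simultaneously for all $m$, against the forced value $w(z_0)=1$, is the delicate part of the argument.
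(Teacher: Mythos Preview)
Your argument is correct and rests on the same core idea as the paper's: force the multiplicity on a convergent sequence to take one value at every isolated point and a strictly smaller value at the limit, then observe the resulting discontinuity. The differences are in packaging. First, you reverse the order of deduction: you kill a coequalizer directly and then recover the missing pushout via the span $(Y,v) \leftarrow (X,u)\amalg(X,u) \to (X,u)$, whereas the paper kills a pushout directly (of $(\{*\},1)\leftarrow(\{*\},2)\to(\alpha\Z_{\geq 0},2)$) and then infers a missing coequalizer from finite cocompleteness failing despite finite coproducts existing. Second, your left-adjoint observation ($U\colon\Bms\to\mathsf{BoolSp}$ has right adjoint $Y\mapsto(Y,\boldsymbol 1)$, hence preserves any colimit that exists) is a clean replacement for the paper's hands-on argument that the comparison map to the candidate colimit space is a bijection; this is a genuine streamlining and worth keeping. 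Third, your concrete example (merging two points of coprime multiplicities $2$ and $3$) differs from the paper's (lowering the multiplicity at $\infty$ from $2$ to $1$), but the mechanism---probing with test multiplicities supported at a single isolated point to pin down $w(z_m)=2$---is identical to the paper's use of the functions $v_n$.
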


\begin{proof}
	Let $(\{*\},2)$ be a singleton space with multiplicity of $*$ equal to $2$.
	Let $(\alpha \Z_{\geq 0},2)$ be the one-point compactification $\alpha \Z_{\geq 0} = \Z_{\geq 0} \cup \{\infty\}$ of the set 
	$\Z_{\geq 0}$, equipped with the multiplicity constantly equal to $2$.
	Let $(\{*\},1)$ be a singleton space with multiplicity of $*$ equal to $1$.
	
	\smallskip	
	We will show that there is no pushout for the following diagram:
	\[
	\begin{tikzcd}
		(\{*\},2) \arrow{r}{\mathrm{id}} \arrow[swap]{d}{* \mapsto \infty} & (\{*\},1)\\
		(\alpha \Z_{\geq 0},2),
	\end{tikzcd}
	\]
	where the function $(* \mapsto \infty) \colon \{*\} \to \alpha \Z_{\geq 0}$ maps $*$ to the accumulation point $\infty$ of $\alpha\Z_{\geq 0}$.
	
	For a proof we will be guided by the  idea  that the natural candidate
	for a pushout would be $(\alpha\Z_{\geq 0}, v)$ where $v(\infty) = 1$ and for every isolated point $x$ we have $v(x) = 2$.  However, it turns out  $v$ is not continuous. 
	Let us transform this motivating idea into a proof.   
	
	By way of contradiction, suppose there is a pushout
	\[
	\begin{tikzcd}
		(\{*\},2) \arrow{r}{\mathrm{id}} \arrow[swap]{d}{* \mapsto \infty} & (\{*\},1) \arrow[dashed]{d}{h}\\
		(\alpha \Z_{\geq 0},2) \arrow[dashed,swap]{r}{g}& (P,v).\arrow[ul, phantom, "\ulcorner", very near start]
	\end{tikzcd}
	\]
	\medskip
	\noindent
	{\it Claim:}  $g$ is a bijection.
	
	We first prove that $g$ is injective.
	Let $(\alpha\Z_{\geq 0}, 1)$ be the boolean multispace whose underlying space is the one-point compactification of $\Z_{\geq 0}$ and whose multiplicity is constantly equal to $1$.
	The identity function $(\alpha\Z_{\geq 0}, 2) \to (\alpha\Z_{\geq 0}, 1)$ is a morphism, 
	and so is  the function $(* \mapsto \infty) \colon (\{*\},1) \to (\alpha\Z_{\geq 0}, 1)$ that maps $*$ to the accumulation point.
	As a consequence, in the following diagram
	the outer square commutes, 
	whence  there is a unique morphism $r \colon (P,v) \to (\alpha\Z_{\geq 0}, 1)$ making the 
	following diagram commute:

	\[
	\begin{tikzcd}
		(\{*\},2) \arrow{r}{\mathrm{id}} \arrow[swap]{d}{* \mapsto \infty} & (\{*\},1) \arrow{d}{h} \arrow[bend left = 1.5 em]{rdd}{* \mapsto \infty}\\
		(\alpha \Z_{\geq 0},2) \arrow[swap]{r}{g} \arrow[swap, bend right = 1em]{rrd}{\mathrm{id}}& (P,v)\arrow[ul, phantom, "\ulcorner", very near start] \arrow["{r}" description, dashed]{rd}\\
		&&(\alpha \Z_{\geq 0},1)
	\end{tikzcd}
	\]
	Since the identity $(\alpha\Z_{\geq 0}, 2) \to (\alpha\Z_{\geq 0}, 1)$ is injective, its factor $g$ is injective.
	
	\smallskip
	We next prove that $g$ is surjective.
	The image $\mathrm{im}(g)$ of $g$ with the restriction $v_\mid$ of $v$ is a boolean multispace.
	The universal property of $P$
	yields  a unique morphism $d \colon (P,v) \to (\mathrm{im}(g),v_\mid)$ making the following diagram commute:
	
	\bigskip
	\[
	\begin{tikzcd}
		(\{*\},2) \arrow{r}{\mathrm{id}} \arrow[swap]{d}{* \mapsto \infty} & (\{*\},1) \arrow{d}{h} \arrow[bend left = 1.5 em]{rdd}{h_\mid}\\
		(\alpha \Z_{\geq 0},2) \arrow[swap]{r}{g} \arrow[swap, bend right = 1em]{rrd}{g_\mid}& (P,v)\arrow[ul, phantom, "\ulcorner", very near start] \arrow["{d}" description, dashed]{rd}\\
		&&(\mathrm{im}(g),v_\mid)
	\end{tikzcd}
	\]

	\noindent
	\medskip
	With $\iota \colon \mathrm{im}(g) \to P$ 
	shorthand for the inclusion map, also the following diagram  commutes:
	\bigskip	
 
	\[
	\begin{tikzcd}
		(\{*\},2) \arrow{r}{\mathrm{id}} \arrow[swap]{d}{* \mapsto \infty} & (\{*\},1) \arrow{d}{h} \arrow[bend left = 1.5 em]{rdd}{h_\mid} \arrow[bend left = 3 em]{dddrr}{h}\\
		(\alpha \Z_{\geq 0},2) \arrow[swap]{r}{g} \arrow[swap, bend right = 1em]{rrd}{g_\mid} \arrow[bend right = 3 em, swap]{rrrdd}{g}& (P,v)\arrow[ul, phantom, "\ulcorner", very near start] \arrow["{d}" description]{rd}\\
		&&(\mathrm{im}(g),v_\mid)\arrow["{\iota}" description, hook]{rd}\\
		&&&(P,v)
	\end{tikzcd}
	\]

	\smallskip
	\noindent
	By the uniqueness in the universal property of the pushout, the composite 
	$\iota \circ d$
	 is the identity on $P$, and hence the inclusion $\iota$ is surjective.
	Therefore, $\mathrm{im}(g) = P$, whence $g$ is surjective.
	Our claim is thus settled.

	\smallskip
	It follows that $g$ is a bijective continuous function between boolean spaces, and hence it is a homeomorphism.
	Therefore, without loss of generality, we may assume the following:

	---$P = \alpha \Z_{\geq 0}$, 
	
	---$g$ is the identity function (possibly not an identity morphism, as the multiplicity $v$ of $P$ may differ from the one of $(\alpha\Z_{\geq 0}, 2)$), 
	and 
	
	---$h$ is
	  the function that maps $*$ to the accumulation point $\infty$ of $\alpha\Z_{\geq 0}$.
	
	\smallskip
	\noindent
	We then have the diagram
	\[
		\begin{tikzcd}
			(\{*\},2) \arrow{r}{\mathrm{id}} \arrow[swap]{d}{* \mapsto \infty} & (\{*\},1) \arrow{d}{* \mapsto \infty}\\
			(\alpha \Z_{\geq 0},2) \arrow[swap]{r}{\mathrm{id}} & (\alpha\Z_{\geq 0},v)\arrow[ul, phantom, "\ulcorner", very near start]
		\end{tikzcd}
	\]
	
	\noindent
	Since $(*\mapsto \infty) \colon (\{*\}, 1) \to (\alpha\Z_{\geq 0}, v)$ decreases multiplicities, $v(\infty) = 1$.
	
	For every $n \in \Z_{\geq 0}$, let $v_n \colon \alpha \Z_{\geq 0} \to \Z_{>0}$ be the function constantly equal to $1$ except on $n$, where it takes the value  $2$.
	The universal property of the pushout yields   a unique morphism $c_n \colon (\alpha\Z_{\geq 0}, v) \to (\alpha\Z_{\geq 0}, v_n)$ making the following diagram commute, and showing that 
	$c_n$ necessarily coincides with the identity function:

	\medskip
	\[
		\begin{tikzcd}
			(\{*\},2) \arrow{r}{\mathrm{id}} \arrow[swap]{d}{* \mapsto \infty} & (\{*\},1) \arrow{d}{* \mapsto \infty} \arrow[bend left = 1.5 em]{rdd}{* \mapsto \infty}\\
			(\alpha \Z_{\geq 0},2) \arrow[swap]{r}{\mathrm{id}} \arrow[swap, bend right = 1em]{rrd}{\mathrm{id}}& (\alpha\Z_{\geq 0},v)\arrow[ul, phantom, "\ulcorner", very near start] \arrow["{c_n}" description, dashed]{rd}\\
			&&(\alpha \Z_{\geq 0},v_n)
		\end{tikzcd}
	\]
	
	\medskip
	\noindent
	For every $n \in \Z_{\geq 0}$, $v(n)$ is a multiple of $2$ since $c_n$ decreases multiplicities, and is a divisor of $2$ since $\mathrm{id} \colon (\alpha\Z_{\geq 0}, 2) \to (\alpha\Z_{\geq 0}, v)$ decreases multiplicities.
	Therefore, $v(n) = 2$.
	
	But then $v$ is not continuous---a contradiction.
	
	\medskip
	\noindent
	This proves that $\Bms$ lacks some pushouts.
	
	\medskip
	Since $\Bms$ has finite coproducts and is not finitely cocomplete, it lacks some coequalizers.
	The  example above shows  that it lacks the coequalizer of the diagram $(\{*\}, 2) \rightrightarrows (\alpha \Z_{\geq 0}, 2) \sqcup (\{*\}, 1)$, where one function maps $*$ to the accumulation point of $\alpha \Z_{\geq 0}$, and the other 
	function maps it  to the unique element $*$ of $\{*\}$.
\end{proof}

\begin{corollary}
	The categories $\uSlg$ and $\SMV$ lack some pullbacks, 
	as well as some equalizers. \hfill{$\Box$}
\end{corollary}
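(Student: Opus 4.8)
The plan is to deduce the statement from \cref{t:no-pushouts} by pure duality, adding no new combinatorial content. By \cref{theorem:duality}, refined in \cref{theorem:enter-B-tris}, the pair $(\SS,\B)$ is a dual equivalence between $\Bms$ and $\uSlg$. Such a dual equivalence interchanges colimits and limits: it carries a pushout square in $\Bms$ to a pullback square in $\uSlg$ and a coequalizer in $\Bms$ to an equalizer in $\uSlg$, and symmetrically in the other direction.

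The one point I would make explicit is that an equivalence of categories does not merely preserve (co)limits but also \emph{reflects their existence}. Indeed, if $\uSlg$ had all pullbacks, then applying $\B$ to those pullback cones and invoking the unit and counit isomorphisms of \cref{theorem:enter-B-tris} (in particular $\B\SS \cong \id_{\Bms}$) would produce a pushout in $\Bms$ for every cospan, contradicting \cref{t:no-pushouts}. Concretely, the cospan obtained by applying $\SS$ to the span of \cref{t:no-pushouts} is $(\Z,1)\to(\Z,2)\leftarrow(C_{\alpha\Z_{\geq 0}},2)$, and this cospan has no pullback in $\uSlg$. The same reasoning, applied to the coequalizer shown to be missing in \cref{t:no-pushouts}, yields a missing equalizer in $\uSlg$.

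Finally I would transport both failures to $\SMV$ along the $\Gamma$ functor, which restricts to a categorical equivalence between $\uSlg$ and $\SMV$ (\cref{definition:specker-mv-algebra}). Since an equivalence reflects the existence of every limit, the pullback and equalizer absent from $\uSlg$ are likewise absent from $\SMV$. The argument is wholly formal; the only step deserving care, and the one I would spell out in full, is the reflection principle, namely that a (dual) equivalence transports the \emph{non}-existence of a limit rather than only its existence. This holds because the equivalence induces, for each diagram, a universal-property-preserving bijection between cones over that diagram and cones over its image, so a terminal cone on one side would force one on the other.
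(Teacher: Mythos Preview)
Your proposal is correct and follows essentially the same approach as the paper: the paper treats the corollary as immediate from \cref{t:no-pushouts} via the dual equivalence of \cref{theorem:duality} (hence the bare $\Box$), and your write-up simply makes that duality argument explicit, including the reflection of non-existence of limits and the passage to $\SMV$ via $\Gamma$. The concrete cospan you identify, $(\Z,1)\to(\Z,2)\leftarrow(C_{\alpha\Z_{\geq 0}},2)$, is exactly the one the paper records in \cref{r:no-pullbacks}.
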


\begin{remark} \label{r:no-pullbacks}
	Dualizing the proof of \cref{t:no-pushouts}, we get that $\uSlg$
	 lacks the pullback of the following diagram:
	\[
	\begin{tikzcd}[column sep = 7em]
		& (\Z,1) \arrow{d}{\text{multiplication by }2}\\
		(C_{\alpha\Z_{\geq 0}},2) \arrow[swap]{r}{\text{evaluation at }\infty}& (\Z, 2)
	\end{tikzcd}
	\]
	Intuitively, the obvious candidate for the pullback (obtained by computing it as in $\Set$) is the set of continuous functions from $\alpha\Z_{\geq 0}$ to $\Z$ whose evaluation at the accumulation point $\infty$ is in $2\Z$.
	However, this unital $\ell$-group is not Specker, 
	because  the only singular elements are the characteristic functions of clopen subsets of $\alpha\Z_{\geq 0}$ not containing $\infty$, and the constant function $2$ is not in the subgroup generated by these.
	
	Expressing the same example in terms of equalizers, 
	we see that
	$\uSlg$ lacks the equalizer of the diagram $(C_{\alpha\Z_{\geq 0}},2) \times (\Z,1) \rightrightarrows (\Z,2)$, where the two maps are
	\begin{align*}
		(C_{\alpha\Z_{\geq 0}},2) \times (\Z,1) & \longrightarrow (\Z,2) & (C_{\alpha\Z_{\geq 0}},2) \times (\Z,1) & \longrightarrow (\Z,2)\\
		(f,a) & \longmapsto f(\infty) & (f,a) & \longmapsto 2a.
	\end{align*}
\end{remark}

\medskip 

\section{\texorpdfstring{Closure properties of $\uSlg$ inside $\ulg$}{Closure properties of uSlg inside ulg}}
\label{section:otto}

We let $ \lg$ and 
$ \ulg$ 
denote the categories respectively given by 
the equational class of $\ell$-groups with their
$\ell$-homomorphisms, and  the category  of unital $\ell$-groups with their
unital $\ell$-homomorphisms.
In this section we are concerned
with the mutual relationships between these categories
and the category  $ \uSlg$.

\smallskip
While being categorically equivalent to the equational class
of MV-algebras,  unital $\ell$-groups are not definable by
equations---in fact, a trivial compactness argument shows that
 the archimedean property of the
strong unit is not even definable in  first-order logic.
The same undefinability property also holds for 
unital Specker $\ell$-groups and 
Specker MV-algebras. 
Since the equational class of  
MV-algebras has  (finite and infinite)   products,
then so does its equivalent category $ \ulg$  of
 unital $\ell$-groups.  
 Naturally,  the  definition of ``$ \ulg$-product''
 does not coincide with  the definition of a cartesian product in the equational class
 $ \lg$   of  $\ell$-groups,
 because  cartesian products of 
 unital $\ell$-groups, qua
 $\ell$-groups, need no longer be unital $\ell$-groups.
The construction of $ \ulg$-products 
is detailed in the proof of the following result:

\begin{proposition}
	\label{proposition:infinite-product-in-UellG}
	The category
	$ \ulg$ of {\it unital $\ell$-groups}
	has infinite products. 
\end{proposition}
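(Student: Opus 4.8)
The plan is to realize the $\ulg$-product not as the full cartesian product of the underlying $\ell$-groups---which in general fails to be unital, since the archimedean condition on the unit may break down---but as its \emph{bounded} sub-$\ell$-group, consisting of those tuples dominated by a single integer multiple of the candidate unit. Concretely, given a family $(G_i, u_i)_{i \in I}$ of unital $\ell$-groups, I would set
\[
P = \Big\{(g_i)_{i \in I} \in \prod_{i \in I} G_i \;\Big\vert\; \exists\, n \in \mathbb{Z}_{>0} \text{ such that } \lvert g_i \rvert \leq n u_i \text{ for all } i \in I \Big\},
\]
equipped with the element $u = (u_i)_{i \in I}$ and the coordinate projections $\pi_i \colon P \to G_i$.

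First I would check that $P$ is a sub-$\ell$-group of $\prod_{i \in I} G_i$. Closure under the group operations is immediate from $\lvert g_i + h_i \rvert \leq \lvert g_i \rvert + \lvert h_i \rvert$ and $\lvert -g_i \rvert = \lvert g_i \rvert$, which turn witnesses $n$ and $m$ into the witness $n + m$. Closure under the lattice operations uses the $\ell$-group inequality $\lvert a \vee b \rvert \leq \lvert a \rvert \vee \lvert b \rvert$ (and its dual for $\wedge$), so that witnesses $n$, $m$ give $\max(n,m)$: since each $u_i \geq 0$, one has $\lvert g_i \rvert \vee \lvert h_i \rvert \leq \max(n,m)\, u_i$. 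Next, $u \in P$ because $\lvert u_i \rvert = u_i \leq 1 \cdot u_i$, and $u$ is a strong order unit of $P$ essentially by the definition of $P$: for $(g_i)_{i} \in P$ the witnessing $n$ gives $\lvert (g_i)_{i} \rvert \leq n u$ in $P$. Thus $(P, u)$ is a unital $\ell$-group, and each $\pi_i$ is a unital $\ell$-homomorphism, since $\pi_i(u) = u_i$ and all operations are computed coordinatewise.

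It then remains to verify the universal property. Given a unital $\ell$-group $(H, v)$ and a family of unital $\ell$-homomorphisms $f_i \colon (H, v) \to (G_i, u_i)$, I would define $f \colon H \to \prod_{i \in I} G_i$ by $f(h) = (f_i(h))_{i \in I}$. The crucial point---and the only place where the passage to the bounded subgroup is genuinely forced---is that $f$ actually lands in $P$: since $v$ is a strong unit of $H$, every $h \in H$ satisfies $\lvert h \rvert \leq n v$ for some $n \in \mathbb{Z}_{>0}$, whence $\lvert f_i(h) \rvert = f_i(\lvert h \rvert) \leq f_i(n v) = n u_i$ for all $i$ \emph{simultaneously}, so $(f_i(h))_{i} \in P$ with the uniform bound $n$. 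That $f$ is a unital $\ell$-homomorphism is clear ($f(v) = (u_i)_{i} = u$, operations coordinatewise), and $\pi_i \circ f = f_i$ by construction; uniqueness is immediate, as $\pi_i \circ f = f_i$ forces $f(h)_i = f_i(h)$ for every $i$. Hence $(P, u)$ together with the projections $\pi_i$ is the product of $(G_i, u_i)_{i \in I}$ in $\ulg$.

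The verifications are all routine; the only conceptual step---and the reason the plain cartesian product does not suffice---is the identification of $P$ as the sub-$\ell$-group of tuples admitting a \emph{uniform} integer bound against the unit. The archimedean property of $v$ in the domain is precisely what guarantees that such a uniform bound exists for $(f_i(h))_{i}$, making the induced map well-defined into $P$; this is the heart of the argument and the expected main obstacle to making the construction go through.
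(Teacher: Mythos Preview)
Your construction is exactly the paper's: restrict the cartesian product $\prod_i G_i$ to the sub-$\ell$-group of tuples whose absolute value is bounded by some integer multiple of $u = (u_i)_i$. The difference lies in how the universal property is verified. The paper appeals to the $\Gamma$ functor and the categorical equivalence between unital $\ell$-groups and MV-algebras, observing that $\Gamma(G,u) \cong \prod_i \Gamma(G_i, u_i)$ and hence $(G,u)$ is the product because MV-algebras, being a variety, have cartesian products. You instead verify the universal property directly, showing by hand that the induced map from any $(H,v)$ lands in $P$ thanks to the strong-unit property of $v$. Your route is more elementary and self-contained, avoiding reliance on the $\Gamma$ equivalence; the paper's route is terser but presupposes that machinery. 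Both are correct.
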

 
 \begin{proof} 
	Let $(G_i,u_i)_{i\in I}$ be a family of
	unital $\ell$-groups.  Let 
	$\prod_i G_i$  be the  (cartesian)  product
	of  $\ell$-groups
	in the equational class of  $\ell$-groups.
	Let us restrict  $\prod_i G_i$ to the set $G$ of those elements of 
	$\prod_i G_i$
	whose absolute value is bounded by 
	some positive integer multiple of the element $u=(u_i)_{i\in I}$. 
	Then the element $u$  becomes a unit of the 
	restricted $\ell$-subgroup  $G$.
	Since
	$\Gamma(G,u)\cong
	\prod_i(\Gamma(G_i,u_i)),$
	the unital  $\ell$-group $G$ equipped with the (distinguished strong order) unit $u$ is the categorical product of the family $(G_i,u_i)_{i}$ in the category of unital $\ell$-groups.
\end{proof}

\begin{proposition}
	\label{proposition:subalgebra}
	There is a unital Specker $\ell$-group
	$(G,u)$ and  a unital $\ell$-subgroup $(H,u)$ of $(G,u)$
	that is not  a unital Specker $\ell$-group.
\end{proposition}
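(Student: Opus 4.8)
The plan is to reuse the unital $\ell$-group appearing in \cref{r:no-pullbacks}. Let $G = C_{\alpha\Z_{\geq 0}}$ be the $\ell$-group of continuous $\Z$-valued functions on the one-point compactification $\alpha\Z_{\geq 0} = \Z_{\geq 0} \cup \{\infty\}$, and take as unit the constant function $u = 2$. Since $\alpha\Z_{\geq 0}$ is a boolean space, $(G,u)$ is a unital Specker $\ell$-group: its singular elements are the characteristic functions of clopen sets by \cref{theorem:accozzaglia}(iv), and $u$ is a strong unit because every element of $C_{\alpha\Z_{\geq 0}}$ has finite range. I would then set
\[
H = \{f \in G \mid f(\infty) \in 2\Z\}.
\]
A routine check shows that $H$ is closed under $+$, $-$, $\vee$, $\wedge$ (evenness at $\infty$ is preserved by $\max$ and $\min$) and contains $u$, so that $(H,u)$ is a unital $\ell$-subgroup of $(G,u)$. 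It remains to prove that $H$ is not Specker.

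The heart of the argument is to determine the singular elements of $H$. Since $H$ is a sublattice of $G$, order and lattice operations in $H$ are computed pointwise, so I can test singularity directly. Let $0 \leq s \in H$ be singular. First I would rule out large values at isolated points: if $s(n) \geq 2$ for some $n \in \Z_{\geq 0}$, then, $\{n\}$ being clopen and not containing $\infty$, the element $a = \chi_{\{n\}}$ lies in $H$ and satisfies $0 \leq a \leq s$, yet $\bigl(a \wedge (s-a)\bigr)(n) = \min(1, s(n)-1) = 1 \neq 0$, contradicting singularity. Next I would treat the point $\infty$, which I expect to be the main obstacle, since evenness at $\infty$ might a priori permit extra singular elements. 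Here $s(\infty) \in 2\Z_{\geq 0}$; if $s(\infty) \geq 2$, then by continuity and the discreteness of $\Z$ the function $s$ is constantly $s(\infty)$ on some clopen neighborhood $V$ of $\infty$. Any such $V$ omits only finitely many points of $\Z_{\geq 0}$ and hence contains an isolated point $n$, so once more $a = \chi_{\{n\}} \in H$ breaks singularity exactly as above. Therefore $s$ takes only the values $0, 1$ at isolated points and the value $0$ at $\infty$, whence $s = \chi_U$ for a clopen set $U \subseteq \Z_{\geq 0}$, i.e.\ a finite subset of $\Z_{\geq 0}$.

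Finally I would conclude. The subgroup of $H$ generated by its singular elements is exactly the set of finitely supported functions vanishing at $\infty$, since each $\chi_{\{n\}}$ is singular and every such function is a finite integer combination of these. The unit $u = 2$ is not finitely supported, hence it does not lie in this subgroup. Thus $H$ is not generated by its singular elements, so $(H,u)$ is a unital $\ell$-subgroup of the unital Specker $\ell$-group $(G,u)$ that fails to be Specker, as required. The only delicate step is the analysis at $\infty$, which is settled by exploiting the isolated points contained in every neighborhood of $\infty$; every other verification is routine and pointwise.
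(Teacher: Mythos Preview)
Your proof is correct and follows essentially the same route as the paper: the same $(G,u) = (C_{\alpha\Z_{\geq 0}},2)$, the same $H = \{f \in G \mid f(\infty) \in 2\Z\}$ (which the paper also identifies with the example from \cref{r:no-pullbacks}), and the same conclusion that the singular elements of $H$ are the characteristic functions of clopen sets missing $\infty$, i.e.\ of finite subsets of $\Z_{\geq 0}$, so that $u=2$ lies outside the subgroup they generate. The only difference is that you spell out the verification of what the singular elements of $H$ are, whereas the paper simply asserts it.
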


\begin{proof}
	Let $(G,u)$ be the $\ell$-group of
	all continuous $\Z$-valued functions on 
	the boolean space 
	$\alpha \Z_{\geq 0}=$
	 the
	one-point compactification  $\{\infty\}\cup \Z_{\geq 0}$ of
	$\Z_{\geq 0} =\{0,1,2,\dots\}$,
	with its distinguished unit $u$
	given by the function constantly equal to  2  over $\alpha\Z_{\geq 0}$.
	Thus  the singular elements are precisely the characteristic functions 
	of the clopen subsets of $\alpha\Z_{\geq 0}$.  See Theorem \ref{theorem:accozzaglia}(iv).
	The constant 1 over $\alpha\Z_{\geq 0}$ is the greatest singular element of $G$.
	As a Specker $\ell$-group, 
	$G$ is generated  (both as a group and as an $\ell$-group)
	by the set  $S_G$ of its singular elements.
	
	Let $(H,u)\subseteq (G,u)$ be the unital $\ell$-group
	generated (as an $\ell$-group) 
	by  the unit $u$ along with the set $S'_G\subseteq S_G$ of
	singular elements of $G$ except those
	having constant value 1 over 
	an open neighborhood of $\infty$.
	In other words,  $H = \{g \in G \mid g(\infty)\in 2\mathbb{Z}\}$.
	(Note that this is the unital $\ell$-group mentioned in \cref{r:no-pullbacks}.)
	The singular elements of $H$ are precisely the characteristic functions of clopen sets not containing $\infty$.
	Then the constant function $2$ is not in the subgroup generated by the singular elements, 
	whence $H$ is not a Specker $\ell$-group.
\end{proof}
 
 A moment's reflection shows that unital $\ell$-groups 
inherit from MV-algebras, via the equivalence 
$\Xi$ of \cite[\S 7]{cigdotmun},  infinite coproducts  (also known as free products).
By contrast we have: 

\begin{proposition}
	\label{proposition:free-three}
	 The infinite coproduct 
	 $\amalg_{i\in I}(S_i,u_i)$ of 
	 unital Specker $\ell$-groups $(S_i,u_i)$ 
	 computed  within the category  $ \ulg$  of 
	 unital $\ell$-groups  need not
	 be a unital Specker $\ell$-group---even assuming   
	that all maps $u_i^\natural$ defined
	in 	\ref{n:natural} have 
	values bounded by a fixed $n$.
\end{proposition}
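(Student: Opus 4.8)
The plan is to exhibit the counterexample as a \emph{countable copower} of one very simple unital Specker $\ell$-group, and to deduce non-Speckerness not by computing the coproduct explicitly, but by importing the non-existence result of \cref{c:not-cocomplete}. Concretely, I would take the index set $I$ to be countably infinite and set $(S_i, u_i) = (\Z \times \Z, (1,2))$ for every $i \in I$. Each of these is a unital Specker $\ell$-group: it is $(C_X, v)$ for the two-point boolean space $X = \{a,b\}$ with $v(a)=1$ and $v(b)=2$. Its two maximal ideals are $\{0\}\times\Z$ and $\Z\times\{0\}$, and a direct computation with \cref{n:natural} (using that the greatest singular element is $(1,1)$) shows that $u_i^\natural$ sends them to $1$ and $2$ respectively; in particular every $u_i^\natural$ is bounded by $n=2$, as demanded by the statement. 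By the remark preceding this proposition, the coproduct $C := \amalg_{i\in I}(S_i, u_i)$ exists in $\ulg$, since unital $\ell$-groups inherit all coproducts from MV-algebras through $\Xi$.

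The heart of the argument is a fullness reduction. Arguing by contradiction, suppose $C$ is a unital Specker $\ell$-group, so that $C \in \uSlg$. Because $\uSlg$ is a \emph{full} subcategory of $\ulg$, the coprojections $\kappa_i \colon (S_i, u_i) \to C$ are $\uSlg$-morphisms (they are $\ulg$-morphisms between objects of $\uSlg$). Moreover, for any Specker target $(T,w)$ and family of $\uSlg$-morphisms $f_i \colon (S_i,u_i) \to (T,w)$, the universal property of $C$ in $\ulg$ yields a unique $\ulg$-morphism $\bar f \colon C \to (T,w)$ with $\bar f \circ \kappa_i = f_i$; since both $C$ and $(T,w)$ lie in $\uSlg$, fullness forces $\bar f$ to be a $\uSlg$-morphism, and uniqueness is inherited. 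Hence $C$ together with the $\kappa_i$ would be a coproduct of the family $(S_i,u_i)$ in $\uSlg$, i.e.\ the countable copower of $(\Z\times\Z,(1,2))$ would exist in $\uSlg$. This contradicts \cref{c:not-cocomplete}, and the contradiction shows that $C$ is not a unital Specker $\ell$-group.

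The only delicate point—and thus the main obstacle—is precisely this reduction: one must verify that a $\ulg$-coproduct whose underlying object happens to lie in the full subcategory $\uSlg$ automatically satisfies the $\uSlg$-coproduct universal property. This is a purely formal fact about full subcategories, but it is what lets me transport the failure of cocompleteness recorded in \cref{c:not-cocomplete} into a statement about the $\ulg$-coproduct, \emph{without} ever having to describe $C$ concretely or analyze free products of MV-algebras. Everything else reduces to the routine checks that each $(\Z\times\Z,(1,2))$ is Specker and that its multiplicity function is bounded by $2$.
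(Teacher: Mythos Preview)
Your proposal is correct and follows essentially the same route as the paper: choose the countable family with every $(S_i,u_i)=(\Z\times\Z,(1,2))$, assume its $\ulg$-coproduct is Specker, use fullness of $\uSlg\hookrightarrow\ulg$ to conclude it would then be a $\uSlg$-coproduct, and contradict \cref{c:not-cocomplete}. The paper's proof is terser but the logical skeleton (including the reliance on the full-subcategory reduction you flag as the delicate point) is identical.
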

\begin{proof}
	By way of contradiction let us assume that the infinite coproduct 
	$\amalg_{i\in I}(S_i,u_i)$ of 
	unital Specker $\ell$-groups $(S_i,u_i)$, 
	computed  within the category $\ulg$ of 
	unital $\ell$-groups, is a unital Specker $\ell$-group.
	Then $\amalg_{i\in I}(S_i,u_i)$ 
	is a coproduct in the category $\uSlg$ of unital Specker $\ell$-groups.
	However, by \cref{c:not-cocomplete}, $\uSlg$ lacks the countable copower of $(\Z \times \Z, (1,2))$.
\end{proof}

\medskip
 
For direct limits of algebras we refer to \cite[\S 21]{gra}.

\begin{proposition}
	\label{proposition:no-limits}
	The direct limit (with $\to$-arrows, also known as 
	the  ``directed co\-limit'', \cite[Examples 11.28(4)]{ada})
	of a direct system of 
	Specker MV-algebras  $A_j$,  as given  in the category of
	MV-algebras and their homomorphisms, {\em need not be
	a Specker MV-algebra}---even
	assuming the  boundedness condition for
	the  homogeneity degree of all 
	homogeneous components of all $A_j$.
\end{proposition}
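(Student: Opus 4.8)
The plan is to exhibit the non-Specker unital $\ell$-group $(H,\mathbf 2)$ of \cref{proposition:subalgebra} as a directed colimit, in $\ulg$, of a chain of unital Specker $\ell$-groups of uniformly bounded multiplicity, and then transport the whole picture through the equivalence $\Gamma\colon \ulg\to\MV$. Recall that on $\alpha\Z_{\geq 0}$ (the one-point compactification of $\Z_{\geq 0}$) the group $G=C_{\alpha\Z_{\geq 0}}$ with the constant unit $\mathbf 2$ is Specker, while its unital $\ell$-subgroup $H=\{g\in G\mid g(\infty)\in 2\Z\}$, with the same unit $\mathbf 2$, is not.

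First I would introduce, for each $n\in\Z_{\geq 0}$, the continuous unit $u_n\colon\alpha\Z_{\geq 0}\to\Z_{>0}$ taking the value $2$ on $\{0,\dots,n-1\}$ and the value $1$ elsewhere (so $u_n(\infty)=1$); continuity is clear since $u_n^{-1}(2)$ is finite. Each $(G,u_n)$ is of the form $C_X$ for the boolean multispace $(\alpha\Z_{\geq 0},u_n)$, hence is a unital Specker $\ell$-group by \cref{theorem:accozzaglia}(iii), and its multiplicity $u_n^\natural=u_n$ takes only the values $1$ and $2$. By \cref{lemma:mv-representation} the Specker MV-algebra $A_n\coloneqq\Gamma(G,u_n)$ then has all homogeneous components of degree at most $2$, so the boundedness hypothesis of the statement is satisfied. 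The transition maps are built from the observation that $u_n\mid u_{n+1}$ pointwise, so the identity is a $\Bms$-morphism $(\alpha\Z_{\geq 0},u_{n+1})\to(\alpha\Z_{\geq 0},u_n)$; applying $\SS$ (equivalently, multiplying by the positive function $u_{n+1}/u_n$) yields an injective unital $\ell$-homomorphism $\tau_n\colon(G,u_n)\to(G,u_{n+1})$, $\tau_n(g)=(u_{n+1}/u_n)\cdot g$. Applying $\Gamma$ turns this into a direct system $A_0\to A_1\to\cdots$ in $\SMV$.

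The crux is to identify the colimit as $(H,\mathbf 2)$. For this I would define injective unital $\ell$-homomorphisms $\phi_n\colon(G,u_n)\to(H,\mathbf 2)$ by $\phi_n(g)=(\mathbf 2/u_n)\cdot g$; the image lands in $H$ since $\phi_n(g)(\infty)=2\,g(\infty)\in 2\Z$, and the cocone identity $\phi_{n+1}\circ\tau_n=\phi_n$ is immediate from $(\mathbf 2/u_{n+1})(u_{n+1}/u_n)=\mathbf 2/u_n$. One then checks that $\mathrm{im}\,\phi_n=\{g\in G\mid g(k)\in 2\Z\text{ for all }k\geq n,\ g(\infty)\in 2\Z\}$, that these images increase with $n$, and that their union is all of $H$: every $g\in H$ is continuous into the discrete space $\Z$ and hence eventually equal to $g(\infty)\in 2\Z$, so $g(k)$ is even for all large $k$. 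Thus $(H,\mathbf 2)$ with the injective cocone $(\phi_n)$ is the directed union of the system, i.e.\ its colimit in $\ulg$. Since $\Gamma$ is an equivalence it preserves directed colimits, so the direct limit of $A_0\to A_1\to\cdots$ computed in $\MV$ is $\Gamma(H,\mathbf 2)$; and this is not a Specker MV-algebra, for otherwise $(H,\mathbf 2)$ would be isomorphic to a unital Specker $\ell$-group, contradicting \cref{proposition:subalgebra}.

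The main obstacle is exactly this last identification: showing that the increasing images $\mathrm{im}\,\phi_n$ exhaust $H$ and nothing larger, so that the abstract directed colimit is genuinely $(H,\mathbf 2)$. This rests on the eventual constancy of continuous $\Z$-valued functions at $\infty$---the same discontinuity of the ``limiting'' multiplicity (which wants to drop from $2$ to $1$ at $\infty$) that drives the failure examples in \cref{proposition:no-countable-power} and \cref{t:no-pushouts}.
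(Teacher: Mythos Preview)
Your argument is correct. The chain $(G,u_n)$ with transition maps $\tau_n(g)=(u_{n+1}/u_n)\cdot g$ is a directed system of unital Specker $\ell$-groups of multiplicity bounded by $2$; the cocone maps $\phi_n(g)=(\mathbf 2/u_n)\cdot g$ are injective unital $\ell$-homomorphisms into $(H,\mathbf 2)$; the images are increasing with union $H$ by the eventual-constancy argument; and a directed union of unital $\ell$-subgroups is a directed colimit in $\ulg$ (equivalently in $\MV$ via $\Gamma$). Since $(H,\mathbf 2)$ is not Specker by \cref{proposition:subalgebra}, neither is $\Gamma(H,\mathbf 2)$, and the proposition follows.

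Your route, however, differs from the paper's. The paper argues indirectly: it takes the directed diagram of \emph{finite copowers} of $\{0,1\}\times\{0,\tfrac12,1\}$ in $\SMV$, indexed by finite subsets of $\Z_{\geq 0}$, and observes that its colimit in $\MV$ cannot be Specker, since a Specker colimit in $\MV$ would automatically be a colimit in the full subcategory $\SMV$ and hence (by the standard fact that a directed colimit of finite copowers yields the full copower) would furnish the countable copower forbidden by \cref{c:not-cocomplete}. So the paper never identifies the colimit object; it only shows it cannot lie in $\SMV$. Your approach instead produces the colimit explicitly as the concrete non-Specker algebra $\Gamma(H,\mathbf 2)$ already isolated in \cref{proposition:subalgebra} and \cref{r:no-pullbacks}, tying the failure directly to the discontinuity-at-$\infty$ phenomenon that underlies \cref{proposition:no-countable-power} and \cref{t:no-pushouts}. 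The paper's proof is shorter and purely categorical, leveraging earlier non-existence results; yours is more informative, giving a tangible witness and making transparent why the boundedness hypothesis does not help.
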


\begin{proof}
	If the directed colimit of  a direct system of 
	Specker MV-algebras $A_j$,  as given  in the category of
	MV-algebras and their homomorphisms,  were a Specker MV-algebra, then it would be a directed colimit in the category of Specker MV-algebras.
	
	Recall that the category of Specker MV-algebras is finitely cocomplete (\cref{corollary:finitely-cocomplete}), but that $\{0,1\} \times \{0, \frac{1}{2}, 1\}$  lacks a countable copower.
	Let $\mathsf{I}$ be the category of finite subsets of $\Z_{\geq 0}$ and inclusions between them.
	Let $F \colon \mathsf{I} \to \SMV$ be the functor that, on objects, maps a finite subset $S$ to the $S$-fold copower of $\{0,1\} \times \{0, \frac{1}{2}, 1\}$ and, on morphisms,  maps an inclusion $S \subseteq T$ to the canonical morphism of the $S$-fold copower of $\{0,1\} \times \{0, \frac{1}{2}, 1\}$ into the $T$-fold copower of $\{0,1\} \times \{0, \frac{1}{2}, 1\}$.
	It follows that this directed diagram has no colimit.
	For otherwise, by the dual of the exercise
	``Products as Projective Limits of Finite Products'' \cite[Exercise~11B]{ada}, it would give a countable copower of $\{0,1\} \times \{0, \frac{1}{2}, 1\}$, a contradiction.
\end{proof}

\section{Related work}
\label{section:related-work}

Multisets and their topological variants
(msets, bags, heaps, bunches, $M$-to\-po\-lo\-gic\-al spaces,
weighted sets, firesets, etc.)
have
an extensive literature. See, e.g.,  \cite{bli},
\cite{gir}, and references therein.

For an earlier paper on the relationships between  multisets  and 
MV-algebras see 
\cite{cigdubmun}, which contains a duality for locally finite MV-algebras, of which Specker MV-algebras are special cases.
For a more general result see
\cite{cigmar}, which constructs a duality for ``locally weakly finite'' 
MV-algebras. The latter class constitutes
a generalization of locally finite MV-algebras that includes, 
among others, the standard MV-algebra $[0,1]$.

For recent work  
see  \cite{abbmarspa}, where a duality is constructed for the
class of ``metrically complete unital $\ell$-groups''.
Every Specker MV-algebra is locally finite, and  
every unital Specker $\ell$-group is metrically complete.
Dually, every boolean multispace
$(X, u \colon X \to \Z_{>0})  \,\,\,$ 
is  a multiset in the sense of \cite{cigdubmun} (up to identifying $\Z_{>0}$ with a subset of the set of supernatural numbers),   a ``normal a-space'' in the sense of \cite{abbmarspa},
and  a real-valued multiset $(X, u' \colon X \to \mathrm{Sub}([0,1]))$
in the sense of \cite{cigmar} (up to identifying $n \in \Z_{>0}$ 
with $\{0, \frac{1}{n}, \dots, \frac{n-1}{n}, 1\}$).


\end{document}